\documentclass[12pt]{article}

\usepackage{times}
\usepackage{amsthm}
\usepackage{amsfonts}
\usepackage{bbm}

\usepackage{amsmath,amssymb}
\usepackage[pdftex]{color, graphicx} 

\usepackage{mathrsfs}
\usepackage{plain}
\usepackage{makeidx}
\theoremstyle{plain}
\newtheorem{thm}{Theorem}
\numberwithin{thm}{section}
\newtheorem{cor}[thm]{Corollary}
\newtheorem{lem}[thm]{Lemma}


\theoremstyle{definition}
\newtheorem{defn}[thm]{Definition}
\newtheorem{assumption}[thm]{Assumption}

\oddsidemargin 15mm
\evensidemargin 0pt
\textwidth 5.7in
\textheight 234mm
\topmargin 0in
\headheight 0in                 
\textheight 8.5in                       
\headsep \headheight

\footskip .4in


\newcommand*{\medcap}{\mathbin{\scalebox{1.5}{\ensuremath{\cap}}}}%

\theoremstyle{remark}
\newtheorem{oss}[thm]{Remark}

\usepackage[english]{babel} 
\usepackage[T1]{fontenc}
\usepackage{indentfirst}
\usepackage{fancyhdr}
\usepackage{afterpage}

\usepackage{cite}
\DeclareMathOperator{\sgn}{sgn}
\usepackage{mathtools}

\DeclarePairedDelimiter\abs{\lvert}{\rvert}%
\DeclarePairedDelimiter\norm{\lVert}{\rVert}%
\makeatletter
\let\oldabs\abs
\def\abs{\@ifstar{\oldabs}{\oldabs*}}
\let\oldnorm\norm
\def\norm{\@ifstar{\oldnorm}{\oldnorm*}}
\makeatother
\usepackage{wasysym}
\DeclareMathOperator*{\esssup}{ess\,sup}

\usepackage{scalerel}

\usepackage{amssymb,fge}

\usepackage{calrsfs}
\DeclareMathAlphabet{\pazocal}{OMS}{zplm}{m}{n}
\newcommand{\Ma}{\mathcal{M}}

\usepackage[titletoc,toc,title]{appendix}

\providecommand{\keywords}[1]
{
	\textbf{\textit{Keywords:}} #1
}

\usepackage{calc}

\usepackage{accents}
\newcommand{\dbtilde}[1]{\accentset{\approx}{#1}}

\makeatletter
\newcommand*{\rom}[1]{\expandafter\@slowromancap\romannumeral #1@}
\makeatother

\usepackage{enumitem}

\usepackage{authblk}

\title{Optimal regularity in time and space for stochastic porous medium equations }
\author[1]{Stefano Bruno \thanks{s.bruno@bath.ac.uk}}
\author[2]{Benjamin Gess \thanks{b.gess@mis.mpg.de}}
\author[1]{Hendrik Weber  \thanks{h.weber@bath.ac.uk}}
\affil[1]{\textit{University of Bath,}}
\affil[2]{\textit{Max Planck Institute for Mathematics in the Sciences and Universit{\"a}t Bielefeld}}

\date{}

\begin{document}
	\maketitle
	\maketitle
\begin{abstract}
	We prove optimal regularity estimates in Sobolev spaces in time and space for solutions to stochastic porous medium equations. The noise term considered here is multiplicative, white in time and coloured in space. The coefficients are assumed to be H{\"o}lder continuous and the cases of smooth coefficients of at most linear growth as well as $\sqrt{u}$ are covered by our assumptions. The regularity obtained is consistent with the optimal regularity derived for the deterministic porous medium equation in \cite{gess2017sobolev,gess2019optimal} and the presence of the temporal white noise.  
	The proof relies on a significant adaptation of velocity averaging techniques from their usual $L^1$ context to the natural $L^2$ setting of the stochastic case. 
	We introduce a new mixed kinetic/mild representation of solutions to quasilinear SPDE and use $L^2$ based a priori bounds to treat the stochastic term.  
\end{abstract} 
\keywords{Stochastic porous medium equations, kinetic formulation, kinetic solution, velocity averaging lemmata.}
\tableofcontents
\makeatletter
\@starttoc{toc}
\makeatother


\section{Introduction} \label{Introduction_section}
We establish optimal regularity estimates in time and space for solutions to the following stochastic porous medium equation (SPME): 
\begin{gather} \label{degenerate_parabolic_hyperbolic_SPDE}
	\begin{cases}
		\partial_t u (t,x) = \Delta ( \abs{u}^{m-1} u ) (t,x) + \sum_{k=1}^{\infty} g_k(x,u(t,x))  \dot{\beta_k}(t) & \text{in} \ (0,T) \times \mathbb{T}^d \\ u(0)=u_0  & \text{in} \  \mathbb{T}^d
	\end{cases}
\end{gather}
where $m >1 $, $T>0$ denotes some time horizon, $ d \ge 1$, $\mathbb{T}^d = (\mathbb{R}/ \mathbb{Z})^d$ is the $d$-dimensional torus, and $u_0 \in L^{2 \alpha}  (\mathbb{T}^d)$ for some $ \alpha \in [\frac{1}{2},1]$, $(\beta_k)_{k \ge 1}$ is a sequence of independent real-valued standard Wiener processes defined on some probability space $(\Omega, \mathcal{F}, \mathbb{P})$ and $( g_k)_{k \ge 1} $ is a sequence of H{\"o}lder continuous diffusion coefficients. For the exact conditions see Assumption \ref{Assumption_diffusion_coefficient} below.

Stochastic porous medium equations are well-studied models describing nonlinear diffusion dynamics perturbed by noise \cite{barbu2016stochastic,da2014stochastic,prevot2007concise}.  Although the regularity of their solutions has received considerable attention \cite{kim2006stochastic, rockner2008non,gess2011strong, gess2018well, barbu2014operatorial,gess2012random}, so far,
all known regularity results for SPME 
are  restricted to a degree of spatial differentiability of  order less or equal to one. This is in contrast with the \textit{optimal} spatial regularity of the deterministic version of \eqref{degenerate_parabolic_hyperbolic_SPDE} which has been derived recently by the second author and his coauthors \cite{gess2017sobolev, gess2019optimal}.
Namely, if $g_k=0$, $u_0 \in L^1$ and $m>1$ then
\begin{gather*}
	u \in L^m(0,T; W^{\frac{2}{m}- ,m}(\mathbb{T}^d)) :=  \underset{\varepsilon >0 }{\medcap} \  L^m(0,T; W^{\frac{2}{m}-\varepsilon ,m}(\mathbb{T}^d)) .
\end{gather*}
We derive the same spatial regularity for \eqref{degenerate_parabolic_hyperbolic_SPDE}. To the best of our knowledge, this is the first spatial regularity result which gives more than one derivative for  \textit{any} quasilinear degenerate stochastic PDE. 

Our estimates depend on the interplay between the growth of the diffusion coefficient and the integrability of the initial data. 
\begin{assumption} \label{Assumption_diffusion_coefficient}
	Suppose that $ g_k \in C( \mathbb{T}^d \times \mathbb{R}) $ and that there exists a sequence $(\lambda_k)_{k \ge 1}$ of non-negative numbers satisfying $	D:= \sum_{k \ge 1} \lambda_k^2 < \infty $ such that for some $\alpha \in [\frac{1}{2},1]$
	\begin{equation*}
		\abs{g_k(x,v)} +  \abs{\nabla_x g_k(x,v)}  \le  \lambda_k \left( \abs{v}^{\frac{1}{2}} \mathbbm{1}_{\abs{v} \le 1} + \abs{v}^{\alpha} \mathbbm{1}_{\abs{v} \ge 1}   \right),
	\end{equation*}
	as well as
	
	\begin{equation*}
		\qquad \qquad  \qquad \quad \	\ \abs{\partial_v g_k(x,v)}  \le  \lambda_k \left( \abs{v}^{- \frac{1}{2}} \mathbbm{1}_{\abs{v} \le 1} + \abs{v}^{\alpha - 1} \mathbbm{1}_{\abs{v} \ge 1}   \right),
	\end{equation*}
	for all $ x \in \mathbb{T}^d$ and $v \in \mathbb{R}$. 
\end{assumption}

This is the main result of this paper.
\begin{thm} \label{Main_theorem}
	Let $ \alpha \in [\frac{1}{2}, 1]$,  $m > 1$,  $u_0 \in  L^{2 \alpha} (\mathbb{T}^d) $  and assume that $g_k$ satisfies the Assumption \ref{Assumption_diffusion_coefficient} for this value of $\alpha$. Let $u$ be a solution to \eqref{degenerate_parabolic_hyperbolic_SPDE} in the sense of Definition \ref{kinetic_solution} (Section \ref{Section_kinetic_measure_solution}) below. Then, for all
	\begin{align*} 
		\sigma_x \in \left[0, \frac{2}{m} \right)  , 
	\end{align*}
	we have
	\begin{align*}
		u \in L^{ m}(\Omega; L^{ m}(0,T;W^{\sigma_x,  m}(\mathbb{T}^d))).
	\end{align*}
	Moreover, the following estimate holds:
	\begin{gather} \label{Main_theorem_estimate_statement_small_velocity}
		\| u \|_{L^{ m}(\Omega; L^{ m}(0,T;W^{\sigma_x,  m}(\mathbb{T}^d)))} \lesssim   \| u_0 \|_{L^{2 \alpha}}^{  2 \alpha}    +  1.
	\end{gather}
	Here and in the proof  $ \ \lesssim $ denotes a bound that holds up to a multiplicative constant that only depends on $  \alpha, m,d, D, \sigma_x$ and $T$.
\end{thm}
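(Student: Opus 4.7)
The plan is to combine the kinetic formulation of \eqref{degenerate_parabolic_hyperbolic_SPDE} with a mixed kinetic/mild representation and then deploy an $L^2$-type velocity averaging lemma. Setting $\chi(v,u) := \mathbbm{1}_{0 < v < u} - \mathbbm{1}_{u < v < 0}$ and $f(t,x,v) := \chi(v, u(t,x))$, Itô's formula applied to $\varphi(u)\psi(x)$ for test functions $\varphi,\psi$ shows that $f$ satisfies a stochastic kinetic equation of the form
\begin{equation*}
\partial_t f - m|v|^{m-1}\Delta_x f = \partial_v(q + q_{\text{Itô}}) + \sum_{k \ge 1} g_k(x,v)\,\delta_{v=u(t,x)}\,\dot{\beta}_k(t),
\end{equation*}
where $q$ is the parabolic dissipation measure (carrying $|\nabla u^{(m+1)/2}|^2$) and $q_{\text{Itô}}$ is an Itô correction proportional to $\sum_k g_k^2 \delta_{v=u}$. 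I would first formalise this kinetic/stochastic identity in the sense of Definition~\ref{kinetic_solution}, deriving the a priori $L^{2\alpha}$ bound on $u$ and the a priori bound $\mathbb{E}\, q((0,T)\times\mathbb{T}^d\times\mathbb{R}) \lesssim \|u_0\|_{L^{2\alpha}}^{2\alpha} + 1$ from testing the equation against an appropriate $\varphi$, using Assumption~\ref{Assumption_diffusion_coefficient} to absorb the stochastic correction.

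Next I would apply Duhamel to the degenerate-parabolic operator $\partial_t - m|v|^{m-1}\Delta_x$ (for each fixed $v$, this is just a heat equation with diffusivity $m|v|^{m-1}$), writing
\begin{equation*}
f(t,x,v) = S_t^v f_0(v) + \int_0^t S_{t-s}^v \partial_v(q+q_{\text{Itô}})(s)\,ds + \sum_{k\ge 1} \int_0^t S_{t-s}^v\!\bigl(g_k \delta_{v=u(s)}\bigr)\,d\beta_k(s),
\end{equation*}
where $S^v_t$ is the heat semigroup of diffusivity $m|v|^{m-1}$. The key observation is that $u(t,x) = \int_\mathbb{R} f(t,x,v)\,dv$, so Sobolev regularity of $u$ follows from velocity averaging of the right-hand side. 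This is the "mixed kinetic/mild" representation alluded to in the abstract.

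The heart of the argument is the velocity averaging step. Rather than working in the traditional $L^1$ framework of \cite{gess2017sobolev,gess2019optimal}, I would establish an $L^2$-based averaging lemma: if a function $f$ satisfies a parabolic kinetic equation with right-hand side $\partial_v G$ where $G$ is a measure (resp.\ a square-integrable stochastic integral), then $\int f\,\rho(v)\,dv$ gains $\sigma_x < 2/m$ spatial derivatives in an appropriate $L^p$ sense, quantified via the Littlewood--Paley decomposition in $x$. For the deterministic terms this reproduces the bound of \cite{gess2019optimal}; for the stochastic contribution, the crucial point is that Itô's isometry replaces the direct $L^1$ control of $G$ by an $L^2$ estimate of $\sum_k |g_k|^2 \delta_{v=u}$, whose velocity integral is controlled by Assumption~\ref{Assumption_diffusion_coefficient} through the same $L^{2\alpha}$ bound on $u$ that enters the deterministic estimate. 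This $L^2$ adaptation is the step I expect to be the main obstacle: the standard velocity averaging machinery relies heavily on real interpolation in $L^1$, and one must reconstruct a parabolic Littlewood--Paley calculus in which the stochastic part is decoupled frequency by frequency and controlled by the Burkholder--Davis--Gundy inequality.

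Finally, combining the Duhamel decomposition with the averaging lemma yields an estimate of $\|u\|_{L^m(\Omega; L^m(0,T; W^{\sigma_x, m}))}$ by the initial data $\|u_0\|_{L^{2\alpha}}^{2\alpha}$, the total mass of the kinetic measure $q$, and the quadratic variation of the noise, all three of which are controlled by $\|u_0\|_{L^{2\alpha}}^{2\alpha} + 1$ through the a priori bounds from the first step. Optimising over the parameters in the Littlewood--Paley decomposition then gives the sharp range $\sigma_x \in [0, 2/m)$ and the claimed estimate \eqref{Main_theorem_estimate_statement_small_velocity}.
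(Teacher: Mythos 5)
Your proposal correctly identifies the high-level architecture (kinetic formulation, Duhamel with the degenerate heat semigroup, velocity averaging adapted to an $L^2$ setting, Littlewood--Paley in $x$), and your mixed representation with $\int_v S^v_{t-s}(g_k\delta_{v=u(s)})\,d\beta_k(s)$ is in fact the paper's Eq.~\eqref{eq:mixed_representation_new} in disguise once the $v$-integral against the Dirac mass is performed. However, there is a genuine gap at the heart of the argument, precisely where you claim the stochastic contribution is ``controlled by Assumption~\ref{Assumption_diffusion_coefficient} through the same $L^{2\alpha}$ bound on $u$ that enters the deterministic estimate.'' This is not enough. The $L^{2\alpha}$ bound combined with It\^o isometry only yields the standard one-derivative gain from the stochastic heat convolution, and that by itself cannot produce a range of $\sigma_x$ up to $2/m$; indeed, as Appendix~\ref{averaging_techniques_and_the_stochastic_integral_appendix} explains, naively applying BDG/It\^o inside the velocity integral one hits the incompatibility $\nabla_x\chi = \delta_{u=v}\nabla u \in L^1_v$ but $\notin L^2_v$.

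The two ingredients your proposal does not identify are (a) the \emph{a priori dissipation estimate} $\nabla(|u|^{\frac{m+\eta}{2}-1}u)\in L^2_{\omega,t,x}$, obtained from the \emph{singular moments} $\mathbb{E}\int|v|^{-\gamma}\,dn$ of the kinetic measure (Lemma~\ref{control_kinetic_measure_v_large_new}), not from its total mass; and (b) the fact that, after freezing the diffusivity at $v=u(s,y)$ in the heat kernel, the $x$-gradients of $\Phi(a(u(s,y))(t-s),\cdot)$ produce terms of the schematic form $g_k\,\nabla u^{\frac{m+1}{2}-\gamma^\star}$, which are controlled exactly by the dissipation bound and are what buys regularity beyond one derivative. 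Obtaining the singular-moment bound in turn requires the decay of $g_k$ near $u=0$ in Assumption~\ref{Assumption_diffusion_coefficient}, since the It\^o correction $\int|u|^{\varepsilon-1}\sum_k g_k^2$ must remain finite. Finally, the microlocal $v$-cut-off at scale $|v|\sim\delta 2^{-2j}$ balancing the degeneracy against the small-$v$ dissipation measure, and the subsequent real interpolation, are what deliver the claimed integrability exponent $m$ (rather than $2$); your sketch gestures at a ``parabolic Littlewood--Paley calculus'' but does not supply this balancing step. As it stands, your argument would give at most one spatial derivative and an $L^2$ integrability scale, not $W^{\sigma_x,m}$ for $\sigma_x$ up to $2/m$.
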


The \textit{optimal} time regularity for the deterministic porous medium equation (PME) driven by an $L^1$ (in time and space) forcing term $S$ has also been derived in \cite{gess2019optimal}. If $u_0 \in L^1$  and the noise term $\sum_{k=1}^{\infty} g_k(x,u(t,x))  \dot{\beta_k}(t)$ in \eqref{degenerate_parabolic_hyperbolic_SPDE} is replaced by $S$ then  
\begin{gather*}
	u \in W^{1-, 1} (0,T; L^1 (\mathbb{T}^d)) :=  \underset{\varepsilon >0 }{\medcap} \ W^{1-\varepsilon, 1} (0,T; L^1 (\mathbb{T}^d)).
\end{gather*}
In the stochastic case, such strong regularity cannot be expected to hold. Due to the low temporal regularity of the noise, the time regularity of $u$ cannot be more than  $\frac{1}{2}-$. In this case, however, we can get better
temporal integrability. 	


\begin{thm} \label{Main_theorem_time}
	Let $ \alpha \in [\frac{1}{2}, 1]$,  $m > 1$,  $u_0 \in  L^{2 \alpha} (\mathbb{T}^d) $  and assume that $g_k$ satisfies the Assumption \ref{Assumption_diffusion_coefficient} for this value of $\alpha$. Let $u$ be a solution to \eqref{degenerate_parabolic_hyperbolic_SPDE} in the sense of Definition \ref{kinetic_solution} (Section \ref{Section_kinetic_measure_solution}) below. Then, for all   
	\begin{align*}
		\sigma_t \in \left[0, \frac{1}{2} \right) ,
	\end{align*}
	we have
	\begin{align*}
		u \in L^1(\Omega; W^{ \sigma_t,2}(0,T;L^{1}(\mathbb{T}^d))).
	\end{align*}
	Furthermore, the following estimate holds:
	\begin{gather} \label{Main_theorem_estimate_statement_time_small_v}
		\| u \|_{L^1(\Omega; W^{\sigma_t,2}(0,T;L^{1}(\mathbb{T}^d)))} \lesssim    \| u_0 \|_{ L^{2 \alpha}}^{  2 \alpha}  + 1,
	\end{gather}
	where  $  \lesssim $ denotes a bound that holds up to a multiplicative constant that only depends on $  \alpha, m,d, D, \sigma_t$ and $T$.
\end{thm}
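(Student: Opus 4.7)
The plan is to estimate the Gagliardo seminorm of $u$ in $W^{\sigma_t,2}(0,T;L^1(\mathbb{T}^d))$ directly from the integral form of \eqref{degenerate_parabolic_hyperbolic_SPDE}. Using the equivalence
\[
\|u\|_{W^{\sigma_t,2}(0,T;L^1)}^2 \simeq \|u\|_{L^2(0,T;L^1)}^2 + \int_0^T\!\!\int_0^T \frac{\|u(t)-u(s)\|_{L^1(\mathbb{T}^d)}^2}{|t-s|^{1+2\sigma_t}}\,dt\,ds,
\]
I would control the $L^2(0,T;L^1)$ term in $L^1(\Omega)$ via the embedding $\|u\|_{L^1}\le \|u\|_{L^{2\alpha}}$ on the torus and the a priori $L^\infty(0,T;L^{2\alpha})$ energy estimate obtained by Itô applied to $\|u\|_{L^{2\alpha}}^{2\alpha}$ (standard in the SPME literature and presumably derived earlier in the paper), which yields $\mathbb{E}\sup_t\|u(t)\|_{L^{2\alpha}}^{2\alpha}\lesssim \|u_0\|_{L^{2\alpha}}^{2\alpha}+1$. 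For the Gagliardo seminorm I would split
\[
u(t)-u(s) = \underbrace{\int_s^t \Delta(|u|^{m-1}u)(r)\,dr}_{=:A(t,s)} + \underbrace{\sum_{k\ge 1}\int_s^t g_k(\cdot,u(r))\,d\beta_k(r)}_{=:M(t,s)}
\]
and bound $A$ and $M$ separately via $(a+b)^2\le 2a^2+2b^2$.

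For the martingale $M$, a pointwise Itô isometry in $x$ together with Assumption~\ref{Assumption_diffusion_coefficient} gives $\mathbb{E}|M(t,s)(x)|^2\lesssim D\,\mathbb{E}\int_s^t (|u|^{2\alpha}+|u|)(r,x)\,dr$; integrating in $x$, using $\|M\|_{L^1}^2\le |\mathbb{T}^d|\|M\|_{L^2}^2$ by Cauchy--Schwarz, and inserting the a priori $L^{2\alpha}$ bound, one obtains $\mathbb{E}\|M(t,s)\|_{L^1}^2\lesssim |t-s|\,(\|u_0\|_{L^{2\alpha}}^{2\alpha}+1)$. Plugged into the Gagliardo kernel this contributes $\int\!\!\int |t-s|^{-2\sigma_t}\,dt\,ds$, which is finite precisely when $\sigma_t<1/2$, matching the threshold in the theorem, and Jensen in $\omega$ gives the desired $L^1(\Omega)$ bound.

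For the drift $A$, the strategy is to exploit the spatial regularity from Theorem~\ref{Main_theorem} to bring $\Delta(|u|^{m-1}u)$ into $L^1((0,T)\times\mathbb{T}^d)$ and then invoke the continuous embedding $W^{1,1}(0,T;L^1)\hookrightarrow W^{\sigma_t,2}(0,T;L^1)$, valid for all $\sigma_t<1/2$. The embedding is a short Fubini calculation: writing $\|A(t,s)\|_{L^1}\le \int_s^t h(r)\,dr$ with $h(r):=\|\Delta(|u|^{m-1}u)(r)\|_{L^1(\mathbb{T}^d)}$, expanding the square and swapping integrations yields
\[
\int_0^T\!\!\int_0^T \frac{\|A(t,s)\|_{L^1}^2}{|t-s|^{1+2\sigma_t}}\,dt\,ds \le \int_0^T\!\!\int_0^T h(r_1)h(r_2)\,K(r_1,r_2)\,dr_1\,dr_2,
\]
with $K(r_1,r_2):=\int_0^{r_1\wedge r_2}\!\int_{r_1\vee r_2}^T |t-s|^{-1-2\sigma_t}\,dt\,ds$ bounded uniformly on $[0,T]^2$ precisely for $\sigma_t<1/2$. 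Taking expectation in $L^1(\Omega)$ (rather than $L^2(\Omega)$) then reduces matters to $\mathbb{E}[A]_{W^{\sigma_t,2}(0,T;L^1)}\lesssim \mathbb{E}\|\Delta(|u|^{m-1}u)\|_{L^1((0,T)\times\mathbb{T}^d)}$.

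The main obstacle is the last step: deducing $\mathbb{E}\|\Delta(|u|^{m-1}u)\|_{L^1((0,T)\times\mathbb{T}^d)}\lesssim \|u_0\|_{L^{2\alpha}}^{2\alpha}+1$ from Theorem~\ref{Main_theorem}. Heuristically this is a nonlinear chain rule at fractional Sobolev scale: $u\in L^m(0,T;W^{2/m-\varepsilon,m})$ should entail $|u|^{m-1}u\in L^1(0,T;W^{2-m\varepsilon,1})$, as in the deterministic PME analysis of \cite{gess2019optimal}. For smooth non-negative $u$ this follows from $\nabla(|u|^{m-1}u)=m|u|^{m-1}\nabla u$ together with Hölder, but for the limited fractional regularity available here, and in particular when $u$ changes sign with $\alpha=1/2$, one needs to argue through the kinetic formulation and the velocity-averaging decomposition that underpins Theorem~\ref{Main_theorem}. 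This is the point where the mixed kinetic/mild representation advertised in the abstract enters the picture and constitutes the technical heart of the time-regularity proof.
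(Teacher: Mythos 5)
Your martingale estimate is broadly in the right spirit (and the $|t-s|$ factor is indeed what makes $\sigma_t<1/2$ the threshold), but the drift part contains a genuine gap that you flag yourself, and it is not a technical obstacle that can be circumvented — it is false as stated. Writing $u(t)-u(s)=A(t,s)+M(t,s)$ with $A(t,s)=\int_s^t\Delta u^{[m]}(r)\,dr$ presupposes that $\Delta u^{[m]}\in L^1((0,T)\times\mathbb{T}^d)$ a.s., which kinetic solutions do not satisfy and which is \emph{not} implied by Theorem \ref{Main_theorem}. The optimal spatial result is $u\in L^m_t W^{2/m-\varepsilon,m}_x$, so $u^{[m]}$ has at best $W^{2-\varepsilon,1}$-type regularity and $\Delta u^{[m]}$ sits only in $W^{-\varepsilon,1}$; this is exactly the critical/borderline case that cannot be closed. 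Equivalently, in the deterministic setting the optimal result is $u\in W^{1-\varepsilon,1}_t L^1_x$, which is strictly weaker than $\partial_t u\in L^1_{t,x}$ — if your step worked, it would upgrade the deterministic $W^{1-,1}$ to $W^{1,1}$, which is false.

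The paper avoids this by working at the level of the kinetic function $\chi$ rather than $u$ itself. After a velocity cutoff $\Psi_0,\Psi_1$ and a time cutoff $\phi$ (equations \eqref{kinetic_formulation_small_v_final}--\eqref{kinetic_formulation_large_v_final}), the Time Averaging Lemma (Lemma \ref{Time_Averaging_Lemma}) is applied. The drift is never inverted as an $L^1$ function; instead one performs a Littlewood--Paley decomposition in the time frequency $\tau$, a microlocal decomposition in $v$ according to the degeneracy of $\mathcal{L}(i\tau,\xi,v)$, and estimates the kinetic-measure contribution $\partial_v q$ using the singular-moment bounds of Lemma \ref{control_kinetic_measure_v_large_new} — which control $\abs{v}^{-\gamma}n$ and $\abs{v}^{2(\alpha-\tilde\gamma)}n$ in $L^1_\omega\mathcal{M}_{\rm TV}$ and serve as the correct substitute for the nonexistent bound on $\Delta u^{[m]}$. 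The stochastic term is handled via the mixed mild/kinetic representation $\chi^{1,1}(t,x,v)=\sum_k\int_0^t\int_y\Phi(a(v)(t-s),x-y)\delta_{\tilde u(s,y)=v}\mathring g_k(s,y,v)\,dy\,d\beta_k(s)$ together with the $L^2$ dissipation estimate $\nabla u^{[(m+\varepsilon)/2]}\in L^2_{\omega,t,x}$, which is strictly more information than the $L^{2\alpha}$ a priori bound alone. Your Itô-isometry estimate for $M(t,s)$ recovers part of Step 2 of the lemma in a cruder form, but without the kinetic decomposition there is no way to estimate the remaining drift contribution in $W^{\sigma_t,2}_t L^1_x$, so the proposal as written does not yield the theorem.
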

\begin{oss}
	A scaling argument in Lemma \ref{Scaling_lemma_time_integrability_appendix} (Appendix \ref{Scaling_appendix}) suggests that the time integrability in Theorem \ref{Main_theorem_time} is optimal for \eqref{degenerate_parabolic_hyperbolic_SPDE} when $g_k=0$ and $u_0 \in L^1(\mathbb{T}^d)$ (i.e. $\alpha = \frac{1}{2}$).
\end{oss}
\begin{oss}
	The results in Theorem \ref{Main_theorem} and Theorem \ref{Main_theorem_time}  still hold for random initial datum under the stochastic integrability condition, $u_0 \in L^{m-1+ 2 \alpha }(\Omega; L^{2 \alpha}(\mathbb{T}^d))$ required by Lemma \ref{Control_higher_powers_Lp} below.
\end{oss}
The following corollary follows from interpolating the results of Theorem \ref{Main_theorem} and Theorem \ref{Main_theorem_time}.
\begin{cor} \label{Space_time_corollary}
 Under the assumptions of Theorem \ref{Main_theorem} and Theorem \ref{Main_theorem_time}, we define, for all $\theta \in (0,1)$
\begin{equation*}
	\frac{1}{p}= \frac{1- \theta}{m} + \theta, \qquad \frac{1}{q}= \frac{1-\theta}{m} + \frac{\theta}{2}.
\end{equation*}
Let $ 	\sigma_t \in \left[0, \frac{1}{2} \right)$ and $\sigma_x \in \left[0, \frac{2}{m} \right)$. Then, we have 
	\begin{equation*}
		u \in L^{ p} \left(\Omega;   W^{\theta \sigma_t, q}(0,T; W^{(1-\theta) \sigma_x,  p}(\mathbb{T}^d))  \right),
	\end{equation*}
with estimate
\begin{gather*}
	\| u \|_{L^{ p} \left(\Omega;   W^{\theta \sigma_t, q}(0,T; W^{(1-\theta) \sigma_x,  p}(\mathbb{T}^d))  \right)} \lesssim    \| u_0 \|_{ L^{2 \alpha}}^{  2 \alpha}  + 1.
\end{gather*}
	Here and in the proof  $ \ \lesssim $ denotes a bound that holds up to a multiplicative constant that only depends on $ \theta, \alpha, m,d, D,  \sigma_t,  \sigma_x$ and $T$.
\end{cor}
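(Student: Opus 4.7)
The plan is to obtain the corollary by complex interpolation of the two estimates \eqref{Main_theorem_estimate_statement_small_velocity} and \eqref{Main_theorem_estimate_statement_time_small_v}. First, I would view the two theorems as saying that $u$ lies in the compatible Banach couple
\[
X_0 := L^m\bigl(\Omega; L^m(0,T; W^{\sigma_x,m}(\mathbb{T}^d))\bigr), \quad X_1 := L^1\bigl(\Omega; W^{\sigma_t,2}(0,T; L^1(\mathbb{T}^d))\bigr),
\]
both continuously embedded in, say, $L^0(\Omega; \mathcal{D}'((0,T)\times\mathbb{T}^d))$. The target space in the statement is then precisely the complex interpolation space $[X_0, X_1]_\theta$.

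Second, I would compute this intermediate space by peeling off the three nested layers and applying one standard identity at each step: (i) for the probability space, $[L^m(\Omega; Y_0), L^1(\Omega; Y_1)]_\theta = L^p(\Omega; [Y_0, Y_1]_\theta)$ with $1/p = (1-\theta)/m + \theta$; (ii) for the time variable, $[L^m(0,T; Z_0), W^{\sigma_t,2}(0,T; Z_1)]_\theta = W^{\theta\sigma_t, q}(0,T; [Z_0, Z_1]_\theta)$ with $1/q = (1-\theta)/m + \theta/2$; and (iii) for the spatial variable, $[W^{\sigma_x,m}(\mathbb{T}^d), L^1(\mathbb{T}^d)]_\theta = W^{(1-\theta)\sigma_x, p}(\mathbb{T}^d)$. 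Identities (i) and (iii) are the classical Calder\'on--Lions/Stein--Weiss results on complex interpolation of Bochner--Lebesgue and Sobolev spaces on the torus, while (ii) follows by identifying the non-integer Slobodeckij norm $W^{\sigma_t, 2}$ with the Besov norm $B^{\sigma_t}_{2,2}$ and invoking the complex interpolation theorem for vector-valued Besov spaces.

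Third, combining the three identities produces exactly the target space appearing in the statement, and the standard interpolation inequality
\[
\|u\|_{[X_0, X_1]_\theta} \; \lesssim \; \|u\|_{X_0}^{1-\theta}\, \|u\|_{X_1}^{\theta}
\]
together with the bounds from Theorem \ref{Main_theorem} and Theorem \ref{Main_theorem_time}, and the trivial estimate $a^{1-\theta} b^\theta \le (1-\theta)a + \theta b$ (Young's inequality), yields the claimed bound since both endpoint norms are controlled by $\|u_0\|_{L^{2\alpha}}^{2\alpha} + 1$.

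The only delicate point, rather than a genuine obstacle, is verifying that complex interpolation commutes with the nested Bochner/Slobodeckij structure in step (ii). This is well documented in the literature on vector-valued function spaces (e.g.\ in the work of Amann) and causes no difficulty here because all the target spaces involved are reflexive (indeed UMD), so the relevant interchange identities for complex interpolation and Bochner integration apply directly.
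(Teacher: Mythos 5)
Your overall strategy — interpolate the two endpoint estimates from Theorems \ref{Main_theorem} and \ref{Main_theorem_time} across the three nested layers (probability, time, space) — is the same as the paper's, but you use complex interpolation where the paper uses real interpolation. That choice by itself is not the problem. The problem is that the two ``exact'' identities you claim in steps (ii) and (iii) are not actually identities, and without the standard $\epsilon$-loss device the argument has a gap.

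Specifically, $L^m(0,T;\cdot)$ is \emph{not} the Besov space $B^0_{m,m}$, and $L^1(\mathbb{T}^d)$ is \emph{not} $B^0_{1,1}$, so the cited Besov-interpolation formulas do not directly apply. One only has one-sided embeddings $L^m \hookrightarrow B^0_{m,\infty}$ and $L^1 \hookrightarrow B^0_{1,\infty}$, and interpolating with an $\infty$ in the third index produces a third index strictly larger than the integrability exponent; e.g. in step (iii) you obtain something of the form $B^{(1-\theta)\sigma_x}_{p,\, m/(1-\theta)}$ with $m/(1-\theta) > p$, which does \emph{not} embed into $W^{(1-\theta)\sigma_x,p} = B^{(1-\theta)\sigma_x}_{p,p}$ at the same smoothness. (If you instead view $L^m$ as the Bessel-potential space $H^{0,m}$ and use Bessel-potential complex interpolation, you land in $H^{\theta\sigma_t,q}$, which again is not $W^{\theta\sigma_t,q}$ when $q<2$, so the same issue arises depending on $m$.) To repair this you must sacrifice a small amount of smoothness. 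Because $\sigma_t$ and $\sigma_x$ range over open intervals, that is harmless — and it is precisely what the paper does: it fixes $\tilde{\sigma}_x>\sigma_x$, runs the real interpolation with an $(\theta-\epsilon)$-, then $(\theta-2\epsilon)$-parameter, and finally chooses $\epsilon$ so that $(1-\theta-\epsilon)\tilde{\sigma}_x = (1-\theta)\sigma_x$. Your write-up omits this step, and as stated the identities are simply false.

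Two smaller points. First, the closing remark that ``all the target spaces involved are reflexive (indeed UMD)'' is wrong: $L^1(\mathbb{T}^d)$ and $L^1(\Omega;\cdot)$ are neither reflexive nor UMD. Fortunately, you only need the \emph{forward} embedding $[L^{p_0}(X_0),L^{p_1}(X_1)]_\theta \hookrightarrow L^p([X_0,X_1]_\theta)$, which holds for $p_0,p_1<\infty$ without reflexivity or UMD hypotheses, so this is a bad justification rather than a fatal flaw. Second, once the $\epsilon$-loss is incorporated, your complex-interpolation route and the paper's real-interpolation route yield the same conclusion with the same constants up to a harmless change, so there is no loss in choosing one over the other; but the proof you wrote does not close without the correction above.
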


\noindent The estimates in Theorem \ref{Main_theorem} and Theorem \ref{Main_theorem_time} cover the case $g_k(x,u) = \mu_k e_k(x) \sqrt{u}$ where $(e_k)_{k \ge 1}$ 
are suitable basis functions (e.g trigonometric polynomials) and $\mu_k \geq 0$ satisfy a decay condition, depending on the choice of $e_k$.
Stochastic equations with $\sqrt{u}$ noise coefficients are known to play an important role in population dynamics. Most prominently, in the linear case $m=1$ and for $d=1$, the stochastic heat equation with $\sqrt{u}$ coefficients driven by  space-time white noise  arises as the scaling limit of independent branching Brownian motions and  describes the evolution in time and space of the density of the so-called super-Brownian motion or Dawson--Watanabe superprocess \cite{perkins2002part,etheridge2000introduction}. 
At least formally, the nonlinear stochastic porous medium equation
\begin{equation} \label{nonlocal_derivation_stochastic_diffusion_introduction}
	\partial_t u = \Delta (u^2) + b \sqrt{u} \xi,
\end{equation}
where $b$ is a constant 
and $\xi$ is  space-time white noise,
has also been derived as a scaling limit for mean-field interacting branching processes,  see \cite{meleard1993interacting,dareiotis2020porous}.
The interpretation of $u$ as the density of a population makes it natural to work with measure valued initial data. We emphasize that for $\alpha=1/2$ the estimates derived in the present work depend only on the $L^1$ norm of the initial data and, therefore, hold uniformly over suitable approximations of measure valued initial data.

The proofs of Theorem \ref{Main_theorem} and Theorem \ref{Main_theorem_time} are based on the kinetic approach originally introduced by Lions, Perthame and Tadmor \cite{lions1994kinetic} in the context of (deterministic) scalar conservation laws. The main idea of this method is to introduce an auxiliary variable $v \in \mathbb{R}$ and to study the so-called kinetic function $ \chi(t,x,v):= \mathbbm{1}_{v < u(t,x)} - \mathbbm{1}_{v < 0} $. This amounts to applying a nonlinear discontinuous function to the solution $u$ which can be recovered by integrating $\chi$ in $v$. The kinetic form for the deterministic part of \eqref{degenerate_parabolic_hyperbolic_SPDE} (i.e. when $ g_k=0 $) is
\begin{equation} \label{kinetic_form_introduction}
	\partial_t \chi  - m \abs{v}^{m-1} \Delta_x \chi =  \partial_v n,
\end{equation}
for some non-negative dissipation (or kinetic) measure $n$. The main advantage of writing the kinetic form is that \eqref{kinetic_form_introduction} can be treated as a linear equation in $\chi$. The linearity of \eqref{kinetic_form_introduction} allows to use Fourier analytic techniques and to derive regularity estimates on $ \int_{v} \chi \ dv$ by means of suitable microlocal decompositions in Fourier space.  This is the basis of the so-called averaging estimates \cite{lions1994kinetic,tadmor2007velocity} and the optimal regularity results in \cite{gess2017sobolev,gess2019optimal} for the PME. With this approach, the analysis of the (spatial) regularity is limited by the presence of the kinetic measure. 
At least formally 
this measure corresponds  to 
\begin{equation} \label{kinetic_measure_expression_introduction}
	n = \delta_{u=v}   |\nabla (|u|^{\frac{m-1}{2}} u)|^2.  	
\end{equation}
In \cite{gess2019regularity,gess2017sobolev} it was shown that it is possible to improve the regularity for hyperbolic and parabolic conservation laws by exploiting the finiteness of  singular moments of the kinetic measure. 
This control was essential to prove the optimal regularity results for the PME in \cite{gess2017sobolev,gess2019optimal}. In particular, the finiteness of these singular moments $\|\abs{v}^{- \gamma} n \|_{L^{1}_{t,x,v}}$ highlights the $L^1$ space as the natural framework for the derivation of the optimal regularity estimates for the PME. This is the setting chosen in \cite{gess2019optimal} and, therefore, the inclusion of a deterministic (time and space) $L^1$  forcing term in \eqref{kinetic_form_introduction} does not complicate the analysis.

%

This is fundamentally different from the inclusion of a stochastic forcing term $\sum_{k=1}^{\infty} g_k(x,u(t,x))  \dot{\beta_k}(t)$ in \eqref{kinetic_form_introduction}. In this case,
the kinetic form of \eqref{degenerate_parabolic_hyperbolic_SPDE} can be written as follows:
\begin{equation} \label{kinetic_form_introduction_stochastic}
	\partial_t \chi  - m \abs{v}^{m-1} \Delta_x \chi =  \sum_{k=1}^{\infty} \delta_{u=v} g_k \dot{\beta}_k + \partial_{v}(-\frac{1}{2}G^{2}\delta_{u=v}+n),
\end{equation}
where $G^2:= G^2(x,v)= \sum_{k \ge 1} \abs{ g_k(x,v)}^2$ comes from applying It{\^o}'s formula to $\chi$. 
Two fundamental problems appear in the stochastic case: Firstly, the gain of spatial regularity from the (scaled) heat semigroup in (\ref{kinetic_form_introduction_stochastic}) is only expected to be of first order \cite{da2014stochastic}, reflecting the $L^2$ nature of  stochastic integrals and the irregularity of temporal white noise.  Secondly, the natural $L^2$-based estimates on stochastic integrals appear  incompatible with the $L^{1}$ setting of the deterministic case. We next comment on both aspects in detail.

The obstacle of gaining only one order of spatial derivative by stochastic convolution has to be addressed by exploiting \textit{a priori} known regularity of the noise coefficients $\delta_{u=v} g_k$ in \eqref{kinetic_form_introduction_stochastic}. 
In the deterministic setting, related obstacles have been overcome using  bootstrapping techniques \cite{lions1994kinetic}.
In order to implement such an argument in the stochastic setting, one could attempt to rewrite the noise coefficient using the following distributional equality\footnote{In the  derivation of this identity, we use the definition of the kinetic function and the condition that $g_k(x,0)=0$ from the Assumption \ref{Assumption_diffusion_coefficient}.}
\begin{equation} \label{ditribution_equality_noise_coefficients_introduction}
	\delta_{u=v} g_k = \chi \partial_v g_k - \partial_v(\chi g_k)  .
\end{equation}
Unfortunately, as $\chi$ is an indicator function, it exhibits much better regularity when measured in $L^1$ than when measured in $L^2$, as required for stochastic integration. 
For instance, if we knew a priori that $\nabla u \in L^{2}_{\omega,t,x}$, then bootstraping  \eqref{ditribution_equality_noise_coefficients_introduction} would lead to expressions   like $\nabla_x \chi = \delta_{u=v} \nabla u \in L^2_{\omega,t,x} L^1_v$. These are $L^1_v$-based expressions (due to $\delta_{u=v}$) and thus incompatible with the $L^2$-type nature of the stochastic integral. Indeed, $\nabla_x \chi = \delta_{u=v} \nabla u \notin L^2_{\omega,t,x} L^2_v$. 
 This point is laid out in more detail in Appendix \ref{averaging_techniques_and_the_stochastic_integral_appendix} below.

The first key idea of the present work is to resolve this issue by avoiding bootstrapping arguments 
and instead directly exploiting   \textit{a priori} dissipation estimates of the type
\begin{equation} \label{eq:apriori_new}
	\nabla (\abs{u}^{\frac{m+\eta}{2}-1} u)\in L^{2}(\Omega \times (0,T) \times \mathbb{T}^d), \qquad \text{for} \  \eta > 0.
\end{equation}
This is an $L^{2}$-type estimate and, therefore, suitable for the stochastic forcing term.  In the linear Dawson--Watanabe case ($m=1$), our argument corresponds to exploiting bounds on $\nabla u^{\frac{1+\eta}{2}}\in L^{2}_{\omega,t,x}$ for non-negative $u$. It seems that this is a new idea even in the linear context.

The proof of averaging estimates and optimal regularity in the deterministic setting relies on the use of velocity parametrized multiplier estimates \cite{gess2017sobolev,gess2019optimal}. These techniques are incompatible with using the a \textit{priori} estimate \eqref{eq:apriori_new}; see Appendix \ref{averaging_techniques_and_the_stochastic_integral_appendix} below for a more detailed discussion. This is in contrast to the linear case of the stochastic heat equation ($m=1$), where the mild formulation can be employed
\begin{equation*}
	u(t,x)=e^{- t \Delta }u_{0}+ \sum_{k=1}^{\infty} \int_{0}^{t}e^{- (t-s) \Delta} g_k(x,u(s,x)) d\beta_k(s),
\end{equation*}
but in the quasilinear case \eqref{degenerate_parabolic_hyperbolic_SPDE} no mild formulation is known. The second main idea of the present work is the identification of a (partially) mild formulation for quasilinear stochastic PDEs through the kinetic formulation.  More precisely, we use the kinetic form \eqref{kinetic_form_introduction_stochastic}  to obtain the following mixed mild-Fourier representation
\begin{equation} \label{eq:mixed_representation_new}
	\begin{split}
		u(t,x)  & =\int_{v}\chi(t,x,v) \ dv \\  & = \int_v e^{- m \abs{v}^{m-1} t \Delta} \chi(0,x,v) \ dv  \\ &+\sum_{k=1}^{\infty}\int_{0}^{t}(e^{-m|v|^{m-1}(t-s)\Delta}g_{k}(x,u(s,x)))\Big|{}_{v=u(s,x)} \ d\beta_{k}(s) \\
		& +\int_{v}   \frac{1}{ \mathcal{L}(\partial_t, \nabla_x,v)}   \partial_{v}(-\frac{1}{2}G^{2}\delta_{v=u}+n) \ dv, 
	\end{split}
\end{equation}
where $ \mathcal{L}(\partial_t, \nabla_x,v)$ is the differential operator with symbol $	\mathcal{L}(i \tau, \xi, v ) := (2\pi)( i \tau + m \abs{v}^{m-1} (2\pi) \abs{\xi}^2)$. We emphasize that the velocity integration in the stochastic integral in  \eqref{eq:mixed_representation_new} has already been performed  to avoid the use of velocity parametrized estimates.
In other words, this representation allows to write the stochastic forcing in terms of a heat kernel with diffusivity frozen at the value of the solution $u$ itself. We believe this to be an interesting and useful observation, since it allows to ``freeze the coefficients'' of the quasilinear drift without relying on any a \textit{priori} continuity of the solution, which would lead to dimensional restrictions, nor on maximal regularity arguments. In a sense, this representation allows to consider a concept of ``mild solutions'' in the setting of quasilinear stochastic PDEs. This is the second main idea of the present work.

Starting from the mixed representation \eqref{eq:mixed_representation_new}, we can then combine Fourier analytic methods to treat the deterministic parts of \eqref{eq:mixed_representation_new} with the real-analytic heat kernel estimates of the stochastic part in \eqref{eq:mixed_representation_new}. The combination of these techniques into consistent and optimal estimates is technically delicate and constitutes the third main contribution of this work.

\begin{oss}
Assumption \ref{Assumption_diffusion_coefficient} implies in particular that $g(x,u)$ goes to zero as $u$ goes to zero. This is necessary to obtain the required control on singular moments of the kinetic measure.
Indeed, at least formally, the estimate on these singular moments is obtained by applying
 It{\^o}'s formula 
to the function
\begin{equation*}
	u \mapsto \int_{\mathbb{T}^d} |u|^{1+ \varepsilon} \ dx,  \quad \quad \text{for} \  \varepsilon \in (0,1).
\end{equation*}
With an integration by parts one obtains
\begin{equation} \label{Ito_formula_additive_case}
	\begin{split}
	&	\int_0^T \int_{\mathbb{T}^d} |u|^{1+ \varepsilon} \ dx \ dt \\ & + \frac{4m \varepsilon( 1+ \varepsilon)  }{(m+ \varepsilon)^2} \int_0^T \int_{\mathbb{T}^d} |\nabla (|u|^{\frac{m+ \varepsilon}{2}-1}u)|^2 \ dx \ dt
		\\ & = \int_{\mathbb{T}^d} |u_0|^{1+ \varepsilon} \ dx 
		\\ & +  ( 1+ \varepsilon) \sum_{k=1}^{\infty} \int_0^T  \int_{\mathbb{T}^d}  |u|^{ \varepsilon} \sgn(u)  g_k(x,u) \ dx \  d\beta_k(t)
		\\ & + \frac{\varepsilon( 1+ \varepsilon)}{2} \int_0^T \int_{\mathbb{T}^d}  |u|^{ \varepsilon -1} \sum_{k=1}^{\infty} g^2_k(x,u) \ dx  \ dt.
	\end{split} 
\end{equation}
The singular moments of the kinetic measure correspond to the second term on the left-hand side of \eqref{Ito_formula_additive_case}, i.e. $|\nabla (|u|^{\frac{m+ \varepsilon}{2}-1}u)|^2$ for $\varepsilon$ small. The decay of $g_k$ as the solution $u$ goes to zero in Assumption \ref{Assumption_diffusion_coefficient} is required to control the It{\^o}'s correction term $\int_0^T \int_{\mathbb{T}^d}  |u|^{ \varepsilon -1} \sum_{k=1}^{\infty} g^2_k(x,u) \ dx  \ dt$ on the right-hand side of \eqref{Ito_formula_additive_case}.
This problem does not appear in the deterministic case \cite{gess2019optimal}, where the forcing term is a deterministic $L^1$ (time and space) function. 
\end{oss}

	\subsection{Structure of the paper}
	 In Section  \ref{Preliminaries} we introduce the spaces employed in this work and the notion of kinetic solution.  Two averaging lemmata (Lemma \ref{Isotropic_Averaging_Lemma} and Lemma \ref{Time_Averaging_Lemma}) are derived in Section \ref{Isotropic_Averaging_Lemmas_section}. The two main regularity results (Theorem \ref{Main_theorem} and Theorem \ref{Main_theorem_time}) are then obtained by applying the two lemmata to \eqref{degenerate_parabolic_hyperbolic_SPDE} in Section \ref{Application}.

	\section{Preliminaries} \label{Preliminaries}
	
	\subsection{Notation and Spaces} \label{Notation}
	We fix a filtered probability space $ \left(\Omega, \mathcal{F},  \left(\mathcal{F}_t \right)_{t \in [0,T]} ,  \mathbb{P} \right)$, carrying an infinite sequence of independent $\left(\mathcal{F}_t \right)_{t \in [0,T]}$-Wiener processes $(\beta_k(t))_{k \in \mathbb{N}, t \in [0,T]}$. Let $\mathcal{P}$ be the predictable $\sigma$-algebra on $\Omega \times [0,T]$ associated to $\left(\mathcal{F}_t \right)_{t \in [0,T]}$.
	
 The short-hand $ \int_{t,x,v}$ is used for the integration over $  \mathbb{R}_t \times \mathbb{T}^d_x \times \mathbb{R}_v$, unless stated otherwise. The Fourier transform in time and space $ \mathcal{F}_{t,x}$ over $ \mathbb{R}_t \times \mathbb{T}^d_x $ for $f(t,x)$ is defined as follows
	\begin{equation*}
		\hat{f}(\tau,\xi) =  \mathcal{F}_{t,x} f(\tau,\xi) = \int_{\mathbb{R}} \int_{\mathbb{T}^d}  f(t,x) e^{-2 \pi i \xi \cdot x}  e^{-2 \pi i \tau  t}  \ dx \ dt, \quad \xi \in  \mathbb{Z}^d, \ \tau \in   \mathbb{R}.
	\end{equation*}
The Fourier transform in the space variable $x$ over $ \mathbb{T}^d $ (resp. time variable $t$ over $\mathbb{R}$) is denoted by $ \mathcal{F}_{x}$ (resp. $ \mathcal{F}_{t}$).

	 For $u \in \mathbb{R}$ we set $u^{[m]} := \abs{u}^{m-1}u$.
	We denote by $ \mathcal{M}_{TV}$ the space of signed measures with finite total variation. 
 For a Banach space $X$ and $ q \in [1, \infty]$, we endow the Bochner--Lebesgue space $L^q(\Omega; X)$ with the usual norm 
	\begin{equation*}
	\| f \|_{L^q(\Omega ; X)} :=	\left(\mathbb{E} \| f(\omega)\|_X^q \right)^{\frac{1}{q}},
	\end{equation*}
with the usual interpretation as essential supremum for $q= \infty$.
	We derive regularity results in an inhomogeneous Chemin--Lerner space and a vector-valued Besov space that are suitable to treat the space and time regularity with Fourier analytic methods. We briefly recall their definitions from \cite{schmeisser1987topics,gess2019optimal,amann1997operator}. 
	Let $ \mu : \mathbb{R} \rightarrow \mathbb{R}$ be a  function whose Fourier transform is smooth and compactly supported in the annulus $ \left \{ \tau \in \mathbb{R}: 2^{-1} \le \abs{\tau} \le 2 \right \}$ with 
	\begin{equation*}
		\sum_{l \in \mathbb{Z}} \hat{\mu}_l (\tau):= \sum_{l \in \mathbb{Z}} \hat{\mu} (2^{-l}\tau)=1, \quad \forall \tau \in \mathbb{R}  \fgebackslash \{ 0 \},
	\end{equation*}
	and let $ \zeta: \mathbb{R}^d \rightarrow \mathbb{R}$ be a function whose Fourier transform is smooth and compactly supported in the annulus $ \left \{ \xi \in \mathbb{R}^d: 2^{-1} \le \abs{\xi} \le 2 \right \}$ such that
	\begin{equation*}
		\sum_{j \in \mathbb{Z} } \hat{\zeta}_j (\xi):= \sum_{j \in \mathbb{Z}} \hat{\zeta} (2^{-j}\xi)=1, \quad \forall \xi \in \mathbb{R}^d \fgebackslash \{0 \}.
	\end{equation*}
	Let $ \hat{\eta}_l := \hat{\mu}_l $ for $ l \ge 1$ and $ \hat{\eta}_0 := 1 - \sum_{l \ge 1} \hat{\eta}_l $. Similarly, let $ \hat{\varphi}_j := \hat{\zeta}_j $ for $ j \ge 1$ and $ \hat{\varphi}_0 := 1 - \sum_{j \ge 1} \hat{\varphi}_j $. We use the notation $ \abs{\tau} \sim 2^l$ for $ 2^l \lesssim  \abs{\tau} \lesssim 2^l$.
	\begin{defn}
		Let $ \mathcal{S}^{'}$ be the space of tempered distributions on $ \mathbb{R}_t \times \mathbb{T}^d_x$ and $  - \infty < \kappa_t , \kappa_x < \infty $ and $ 1 \le p,q \le \infty $.
	\begin{enumerate}
	\item		 The inhomogeneous Chemin--Lerner space $ \tilde{L}^{p}_t B^{\kappa_x}_{p, \infty} $ is defined by
			\begin{equation*}
				\tilde{L}^{p}_t B^{\kappa_x}_{p, \infty} := \tilde{L}^{p}_t B^{\kappa_x}_{p, \infty}(\mathbb{R}_t \times \mathbb{T}^d_x ) := \left \{ f \in \mathcal{S}^{'} \ | \  \| f \|_{\tilde{L}^{p}_t B^{\kappa_x}_{p, \infty}} <  \infty \right \},
			\end{equation*}
			with the norm 
			\begin{equation*}
				\| f \|_{\tilde{L}^{p}_t B^{\kappa_x}_{p, \infty}} := \sup_{j \ge 0} 2^{\kappa_x j} \| \mathcal{F}_x^{-1} \hat{\varphi}_j \mathcal{F}_x f \|_{L^p (\mathbb{R}_t \times \mathbb{T}^d_x)}.
			\end{equation*}
				where
		\begin{equation*}
			\mathcal{F}^{-1}_{x} \hat{f}  (t,x) :=   \sum_{\xi \in   \mathbb{Z}^d}	\hat{f}(t,\xi) e^{2 \pi i \xi \cdot x} .  
		\end{equation*}
		\item		 The vector-valued Besov space $ B^{\kappa_t}_{q, \infty} (\mathbb{R}_t; L^p(\mathbb{T}^d_x))$ is defined by
	\begin{equation*}
	B^{\kappa_t}_{q, \infty} (L^p_x) :=	B^{\kappa_t}_{q, \infty} (\mathbb{R}_t; L^p(\mathbb{T}^d_x))  := \left \{ f \in \mathcal{S}^{'} \ | \  \| f \|_{B^{\kappa_t}_{q, \infty} (L^p_x)} <  \infty \right \},
	\end{equation*}
	with the norm 
	\begin{equation*}
		\| f \|_{B^{\kappa_t}_{q, \infty} (L^p_x) } := \sup_{l \ge 0} 2^{\kappa_t l} \| \mathcal{F}_t^{-1} \hat{\eta}_l \mathcal{F}_t f \|_{  L^q( \mathbb{R}_t; L^p( \mathbb{T}^d_x) )}.
	\end{equation*}
	where
	\begin{equation*}
		\mathcal{F}^{-1}_{t} \hat{f} (t,x) :=  \int_{\mathbb{R}} 	\hat{f}(\tau,x)   e^{2 \pi i \tau  t} dt .  
	\end{equation*}
			\end{enumerate}
	\end{defn}
\begin{oss}
The spaces $ \tilde{L}^{p}_t B^{\kappa_x}_{p, \infty} $ and $ B^{\kappa_t}_{q, \infty} (L^p_x) $ are   Banach spaces  \cite{triebel1977generalIII,amann1997operator}.
\end{oss}
	The following embedding results are used to derive the two main results (Theorem \ref{Main_theorem} and Theorem \ref{Main_theorem_time}) from the regularity estimates obtained in the averaging lemmata (Lemma \ref{Isotropic_Averaging_Lemma} and Lemma \ref{Time_Averaging_Lemma}) in Section \ref{Isotropic_Averaging_Lemmas_section}.
	\begin{lem} \label{Embedding_time_spaces} 
		Let $ \kappa_x, \kappa_t > 0$ and $ p,q \in [1, \infty]$. Then 
		\begin{equation*}
			\begin{split}
		&	\tilde{L}_t^p B^{\kappa_x}_{p, \infty} (\mathbb{R}_t \times \mathbb{T}^d_x ) \subset L^{p}(\mathbb{R}_t; W^{\sigma_x, p}(\mathbb{T}^d_x)),
		\\ & B^{\kappa_t}_{q, \infty} (\mathbb{R}_t ;  L^p (\mathbb{T}^d_x )) \subset W^{\sigma_t, q}(\mathbb{R}_t; L^{p}(\mathbb{T}^d_x)),
			\end{split}
		\end{equation*}
for all	$ \sigma_x \in [0, \kappa_x )$ and $ \sigma_t \in [0, \kappa_t )$.
	\end{lem}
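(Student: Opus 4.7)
The plan is to reduce both embeddings to a standard Littlewood--Paley summation, exploiting the identification (up to an arbitrarily small loss of regularity) of the Sobolev--Slobodeckij space $W^{\sigma,p}$ with the Besov space $B^{\sigma}_{p,p}$, and its vector-valued analogue.

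For the first inclusion, I would fix $f \in \tilde{L}^{p}_t B^{\kappa_x}_{p,\infty}$ and, for non-integer $\sigma_x \in [0,\kappa_x)$, start from the characterisation
\begin{equation*}
\|f\|_{L^{p}_t W^{\sigma_x,p}_x}^p \simeq \|f\|_{L^{p}_t B^{\sigma_x}_{p,p}}^p = \int_{\mathbb{R}} \sum_{j \ge 0} 2^{\sigma_x j p}\|\mathcal{F}_x^{-1}\hat{\varphi}_j \mathcal{F}_x f(t,\cdot)\|_{L^{p}(\mathbb{T}^d)}^p \, dt.
\end{equation*}
Fubini turns the right-hand side into $\sum_{j \ge 0} 2^{\sigma_x j p}\|\mathcal{F}_x^{-1}\hat{\varphi}_j \mathcal{F}_x f\|_{L^{p}(\mathbb{R}_t \times \mathbb{T}^d_x)}^p$, and the very definition of the Chemin--Lerner norm supplies the uniform dyadic bound $\|\mathcal{F}_x^{-1}\hat{\varphi}_j \mathcal{F}_x f\|_{L^{p}_{t,x}} \le 2^{-\kappa_x j}\|f\|_{\tilde{L}^{p}_t B^{\kappa_x}_{p,\infty}}$. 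Inserting this control reduces matters to the convergence of the geometric series $\sum_{j\ge 0} 2^{(\sigma_x-\kappa_x)jp}$, which is finite precisely when $\sigma_x < \kappa_x$.

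For the second inclusion the strategy is identical, now in the vector-valued setting. I would use the identification $W^{\sigma_t,q}(\mathbb{R}_t;L^{p}_x) = B^{\sigma_t}_{q,q}(\mathbb{R}_t;L^{p}_x)$ for non-integer $\sigma_t \in [0,\kappa_t)$, whose norm is
\begin{equation*}
\Big(\sum_{l \ge 0} 2^{\sigma_t l q} \|\mathcal{F}_t^{-1}\hat{\eta}_l \mathcal{F}_t f\|_{L^{q}(\mathbb{R}_t; L^{p}(\mathbb{T}^d_x))}^q\Big)^{1/q}.
\end{equation*}
The definition of $B^{\kappa_t}_{q,\infty}(L^{p}_x)$ gives $\|\mathcal{F}_t^{-1}\hat{\eta}_l \mathcal{F}_t f\|_{L^{q}_t L^{p}_x} \le 2^{-\kappa_t l}\|f\|_{B^{\kappa_t}_{q,\infty}(L^{p}_x)}$, and summing the geometric series $\sum_{l\ge 0}2^{(\sigma_t-\kappa_t)lq}$ closes the estimate.

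The only mildly subtle point is the treatment of integer values of $\sigma_x$ or $\sigma_t$, where the identification of $W^{\sigma,p}$ with $B^{\sigma}_{p,p}$ fails. Since the target range is an open interval, I would simply pick a non-integer $\tilde{\sigma}$ strictly between $\sigma_x$ (respectively $\sigma_t$) and $\kappa_x$ (respectively $\kappa_t$), apply the above argument to $\tilde{\sigma}$, and conclude via the trivial monotone embedding. The identification $W^{\sigma,p}=B^{\sigma}_{p,p}$ for non-integer $\sigma$ and its vector-valued analogue are supplied by \cite{amann1997operator,triebel1977generalIII}, already cited in the excerpt; no genuine obstacle arises, and the argument is essentially bookkeeping on the Littlewood--Paley scale.
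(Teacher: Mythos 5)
Your proof is correct. The paper's own proof consists only of two citations --- to Bahouri--Chemin--Danchin for the first embedding and to Amann for the second --- and spells out no argument, so your self-contained Littlewood--Paley reconstruction is the natural unwinding of what those references supply rather than a competing route. The chain you use (identify $W^{\sigma,p}$ with $B^{\sigma}_{p,p}$ for non-integer $\sigma$, apply Tonelli to move the $t$-integral past the dyadic sum, pull out the uniform bound $\|\mathcal{F}_x^{-1}\hat{\varphi}_j\mathcal{F}_x f\|_{L^p_{t,x}}\le 2^{-\kappa_x j}\|f\|_{\tilde{L}^p_t B^{\kappa_x}_{p,\infty}}$, sum the geometric series, and pass to a non-integer $\tilde\sigma\in(\sigma,\kappa)$ plus the trivial monotone embedding for the integer endpoints) is exactly what one wants, and it makes transparent why the strict inequality $\sigma<\kappa$ is required: it is the convergence criterion for $\sum_{j\ge 0}2^{(\sigma-\kappa)jp}$. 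The same structure carries over verbatim to the vector-valued temporal case via $W^{\sigma_t,q}(\mathbb{R};X)=B^{\sigma_t}_{q,q}(\mathbb{R};X)$ for non-integer $\sigma_t$ and arbitrary Banach $X$. One small detail worth stating explicitly: for $p=\infty$ (resp.\ $q=\infty$) the $\ell^p$ sum should be read as a supremum, in which case the geometric series collapses to the trivial bound $2^{\sigma j}2^{-\kappa j}\le 1$; the argument still closes, and your non-integer shift already covers $\sigma_x=0$.
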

\begin{proof}
The first embedding follows from \cite[page 98]{bahouri2011fourier}.  The second embedding is derived using \cite[Theorem 2.2.2]{amann2019linear} and \cite[Section 5]{amann1997operator}. 
\end{proof}


	\subsection{Kinetic solution} \label{Section_kinetic_measure_solution}
	Throughout this paper we work with the following definition of kinetic measure and kinetic solution.
		\begin{defn}[Kinetic measure]
		A mapping $n: \Omega \rightarrow \mathcal{M}_{+}( [0,T] \times \mathbb{T}^d \times \mathbb{R})$, the set of nonnegative Radon measures over $ [0,T] \times \mathbb{T}^d \times \mathbb{R}$, is said to be a kinetic measure provided that for all $ \varrho \in C_c([0,T) \times \mathbb{T}^d \times \mathbb{R})$, the process 
		\begin{equation*}
		 \int_0^t \int_{\mathbb{T}^d} \int_{\mathbb{R}} \varrho(s,x,v) \ dn(s,x,v)
		\end{equation*}
		 is $\mathcal{F}_t$-predictable.
	\end{defn}
	
	\begin{defn}[Kinetic solution] \label{kinetic_solution}
		A function $u \in L^1(\Omega \times [0,T], \mathcal{P}, d\mathbb{P} \otimes dt; L^1(\mathbb{T}^d ))$ is called a kinetic solution to \eqref{degenerate_parabolic_hyperbolic_SPDE} with initial datum $u_0$ if the following conditions are satisfied:
		\begin{itemize}
		\item $\nabla u^{[\frac{m+1}{2}]}  \in L^2(\Omega \times [0,T] \times \mathbb{T}^d)$.
		\item Let $n_1 : \Omega \rightarrow \mathcal{M}_{+}([0,T] \times \mathbb{T}^d \times \mathbb{R})$ be defined as follows: for all $ \phi \in C^{\infty}_c([0,T] \times \mathbb{T}^d \times \mathbb{R}) $,  \begin{equation} \label{definition_n1_parabolic_dissipation_measure}
			n_1 (\phi) = \int_0^T \int_{\mathbb{T}^d} \int_{\mathbb{R}} \phi(t,x,v) \delta_{u(t,x)=v} \frac{4m}{(m+1)^2} \abs{\nabla u^{\left[\frac{m+1}{2} \right]}(t,x)}^2 \ dv \ dx \ dt.
		\end{equation} 
There exists a kinetic measure $n$ such that, for all $\phi \in C^{\infty}_c([0,T] \times \mathbb{T}^d \times \mathbb{R})$, $\phi \ge 0$, it holds  $n(\phi) \ge n_1(\phi)$, $\mathbb{P}$-a.s. Let $ \chi =\chi(\omega,t,x,v) := \mathbbm{1}_{v < u(\omega,t,x)} -\mathbbm{1}_{v < 0}$. Then the pair $ ( \chi, n)$ satisfies,  for all $ \varphi \in C^{\infty}_c([0,T) \times \mathbb{T}^d \times \mathbb{R})$,  $ \mathbb{P}$-a.s., the following
	\begin{equation} \label{kinetic_formulation_distribution_PME}
				\begin{split}
				& \int_0^T \int_{\mathbb{T}^d} \int_{\mathbb{R}} \chi(t,x,v) \partial_t \varphi(t,x,v)    \ dv \ dx \ dt    \\ & + 	 \int_{\mathbb{T}^d} \int_{\mathbb{R}}  \chi(0,x,v) \varphi(0,x,v)  \ dv \ dx
				 \\   & + m \int_0^T \int_{\mathbb{T}^d} \int_{\mathbb{R}}  \chi(t,x,v)  \abs{v}^{m-1} \Delta \varphi(t,x,v)  \ dv  \ dx \ dt 
		   \\ &  = - \sum_{k = 1}^{\infty} \int_0^T \int_{\mathbb{T}^d} g_k(x,u(t,x)) \varphi(t,x,u(t,x)) \ dx \  d\beta_k(t) 		
		\\ & +  \int_0^T \int_{\mathbb{T}^d} \int_{\mathbb{R}}  \partial_v \varphi(t,x,v) \ dn(t,x,v)    \\  &  - \frac{1}{2} \int_0^T \int_{\mathbb{T}^d} G^2(x,u(t,x)) \partial_v \varphi(t,x,u(t,x)) \ dx \ dt. 
			\end{split}
			\end{equation}
		\end{itemize}
	\end{defn}
The definition of a kinetic solution given in Definition \ref{kinetic_solution} includes only the conditions required to prove the regularity results in Theorem \ref{Main_theorem} and Theorem \ref{Main_theorem_time}. Additional assumptions on the kinetic measure are needed in order to prove uniqueness of solutions \cite{gess2021fehrman,gess2018well}.
	Under these additional assumptions, the well-posedness of non-negative kinetic solutions of \eqref{degenerate_parabolic_hyperbolic_SPDE} for the case $\alpha =\frac{1}{2}$ in Assumption \ref{Assumption_diffusion_coefficient} has been proven in \cite{gess2021fehrman} for locally $\frac{1}{2}$-H{\"o}lder continuous $ g_k$. The existence and uniqueness for signed kinetic solutions of \eqref{degenerate_parabolic_hyperbolic_SPDE} with the same type of  $ g_k$ is still an open problem. For the case $\alpha > \frac{1}{2}$, 
	 the well-posedness for signed entropy/kinetic solutions of \eqref{degenerate_parabolic_hyperbolic_SPDE} with $ (\frac{1}{2}+ \delta)$-H{\"o}lder continuous $g_k$ for some $\delta \in (0, \frac{1}{2}]$ has been proven in \cite{dareiotis2019entropy}. 

	\section{Averaging Lemmata} \label{Isotropic_Averaging_Lemmas_section}
     In this section, we derive two averaging lemmata that we will use in Section \ref{Application} to prove our main results (Theorem \ref{Main_theorem} and Theorem \ref{Main_theorem_time}). Here, we work with the following notion of solution.
     \begin{defn}[Quasi-solution] \label{distributional_solution_Averaging_lemmata}
  	Let $\tilde{u}$ be a $\mathcal{F}_t$-predictable function defined on $ \Omega \times [0,T] \times \mathbb{T}_x^d $ and let $\mathring{g}_k$ be a function defined on $ \mathbb{R}_t \times \mathbb{T}_x^d \times \mathbb{R}_v $ compactly supported in $(0,T)$ such that, for all $ \varphi \in C^{\infty}_c( \mathbb{R}_t \times \mathbb{T}_x^d \times \mathbb{R}_v)$, the following holds
     	\begin{equation*}
     		\mathbb{P} \left[ \int_0^T \sum_{k=1}^{\infty} \left( \int_{\mathbb{T}^d } \mathring{g}_k(s,x,\tilde{u}) \varphi(s,x,\tilde{u})  \ dx  \right)^2 ds < \infty \right] = 1.
     	\end{equation*}
      Let $h, \tilde{h}$ be Radon measures on $ \Omega \times \mathbb{R}_t \times \mathbb{T}_x^d \times \mathbb{R}_v$. 
 Let $ \chi $ be a $\mathcal{F}_t$-predictable function defined on $ \Omega \times \mathbb{R}_t \times \mathbb{T}_x^d \times \mathbb{R}_v $ compactly supported in $ (0,T)$, a.s. locally integrable and solution, for all $ \varphi \in C^{\infty}_c( \mathbb{R}_t \times \mathbb{T}_x^d \times \mathbb{R}_v)$,  $ \mathbb{P}$-a.s., to the following 
     	\begin{equation} \label{notion_solution_kinetic_torus}
     		\begin{split}
&	\int_{\mathbb{R}_t} \int_{\mathbb{T}_x^d} \int_{\mathbb{R}_v}  \chi(t,x,v) \partial_t \varphi(t,x,v)  \ dv \ dx \ dt    \\ & + m	\int_{\mathbb{R}_t} \int_{\mathbb{T}_x^d} \int_{\mathbb{R}_v}  \chi(t,x,v)  \abs{v}^{m-1} \Delta \varphi(t,x,v)  \ dv  \ dx \ dt    \\  &  = - \sum_{k = 1}^{\infty} \int_{\mathbb{R}_t} \int_{\mathbb{T}_x^d} \mathring{g}_k(t,x, \tilde{u}(t,x)) \varphi(t,x,\tilde{u}(t,x))  \ dx \ d\beta_k(t) 
\\  & - 	\int_{\mathbb{R}_t} \int_{\mathbb{T}_x^d} \int_{\mathbb{R}_v}  \varphi(t,x,v) \ dh(t,x,v)  
	\\  & + 	\int_{\mathbb{R}_t} \int_{\mathbb{T}_x^d} \int_{\mathbb{R}_v}  \partial_v \varphi(t,x,v) \ d \tilde{h}(t,x,v)  
     		\end{split} 
     	\end{equation}
     \end{defn}
\begin{oss}
We introduce two cut-off functions in the time and velocity variable in the kinetic formulation \eqref{kinetic_formulation_distribution_PME} to prove Theorem \ref{Main_theorem} and Theorem \ref{Main_theorem_time} in Section \ref{Application}.	The Definition  \ref{distributional_solution_Averaging_lemmata} is a modification of Definition \ref{kinetic_solution} to take into account the presence of these cut-off functions. We use the Definition \ref{distributional_solution_Averaging_lemmata} in Lemma \ref{Isotropic_Averaging_Lemma} and Lemma \ref{Time_Averaging_Lemma} below.
	
	
\end{oss}
The following averaging lemma is used to prove Theorem \ref{Main_theorem}.
	\begin{lem} \label{Isotropic_Averaging_Lemma}
	Assume $m \in (1,\infty) $, $ \beta > 1$ and $ \rho= 1-  \frac{1}{\beta}$. Let $ \nu \in ( \frac{1}{2}, \frac{m}{2})$, $\epsilon \in (0,1)$, $ \gamma \in (0,\min(1+\epsilon,m))$, $ \varpi >1 $ and $ \varkappa^{-1} =1 - \varpi^{-1} $ such that 
	\begin{equation} \label{definition_gamma_star_statement}
\gamma^{\star}  :=   \gamma(1- \epsilon \varpi) + \epsilon \varpi(m+ \frac{1}{2} - \nu) < \frac{m+1}{2}.
	\end{equation}
 Let $ \chi \in L^{\beta}_{\omega,t,x,v} $ be a solution in the sense of Definition \ref{distributional_solution_Averaging_lemmata}. 
	Suppose that $  \mathring{g}_k(\tilde{u}) \nabla  \tilde{u}^{\frac{m+1}{2}- \gamma^{\star} }$, $ \tilde{u}^{\frac{m+1}{2}- \gamma^{\star} } \nabla \mathring{g}_k(\tilde{u})     \in L^2_{\omega,t,x}$, $ \mathring{g}_k(\tilde{u})  \in L^{\infty}_t L^{2}_{\omega,x}$, $ \tilde{u}^{\varkappa(2 \nu- 1)} \mathring{g}^2_k(\tilde{u})  \in L^1_{\omega,t,x}$ and  $h, \tilde{h}$ satisfy the condition
	\begin{equation} \label{condition_singular_moments_statement}
		\abs{h}(\omega,t,x,v) \abs{v}^{1- \gamma}  + | \tilde{h} | (\omega,t,x,v) \abs{v}^{- \gamma} \in L^1_{\omega} \Ma_{\text{TV}}( \mathbb{R}_t \times \mathbb{T}_x^d \times \mathbb{R}_v).
	\end{equation}
	If $\bar{\chi} := \int_v \chi \ dv \in L^1_{\omega,t,x}$, then $ \bar{\chi}  \in L^{p}_{\omega} \tilde{L}^{p}_t B_{p, \infty}^{\kappa_x}   $ where 
	\begin{equation}  \label{reg_time_results_before_bootstrap}
		\begin{split} 
			\kappa_x &  :=  \frac{2 \rho }{m - \gamma  + \rho} - \epsilon,
			\\    p &  :=   \frac{m- \gamma  + \rho}{\rho +(1-\rho)(m- \gamma )} - \epsilon, 
		\end{split}
	\end{equation}
	 with estimate  
	\begin{equation} \label{Bounds_Averaging_lemma_statement}
		\begin{split}
			\| \bar{\chi}   \|_{ L^{p}_{\omega} \tilde{L}^{p}_t B_{p, \infty}^{\kappa_x}}  & \lesssim   \| \chi   \|_{ L^{\beta}_{\omega,t,x,v}} +   \left(  \sum_{k=1}^{\infty}     \|  \mathring{g}_k   \nabla  \tilde{u}^{\frac{m+1}{2}- \gamma^{\star} }   \|_{L^2_{\omega,t,x}}^2 \right)^{\frac{1}{2}} 
	\\	& + \left(  \sum_{k=1}^{\infty}     \| \tilde{u}^{\frac{m+1}{2}- \gamma^{\star} }  \nabla \mathring{g}_k      \|_{L^2_{\omega,t,x}}^2 \right)^{\frac{1}{2}}     \\ & + \left( \sum_{k=1}^{\infty}  \esssup_{t \in [0,T]}    \left \|    \mathring{g}_k^2(t,\cdot,\tilde{u})  \right \|_{L^1_{\omega,x}}^{\frac{1}{\varpi}}   \left \|    \tilde{u}^{\varkappa(2 \nu - 1)}  \mathring{g}_k^2   \right \|_{L^1_{\omega,t,x}}^{\frac{1}{\varkappa}} \right)^{\frac{1}{2}} 	
	     \\ & + \| \abs{v}^{1- \gamma}  h  \|_{L^1_{\omega} \mathcal{M}_{TV}}  +  \| \abs{v}^{- \gamma } \tilde{h}   \|_{L^1_{\omega} \mathcal{M}_{\text{TV}}} +  \| \bar{\chi}   \|_{ L^{1}_{\omega,t,x}}.
		\end{split}
\end{equation}
	Here and in the proof  $ \ \lesssim $ denotes a bound that holds up to a multiplicative constant that only depends on $ m, \rho, \nu, \epsilon, \gamma, \varpi $ and $T$.
\end{lem}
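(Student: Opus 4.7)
The argument combines a spatial Littlewood--Paley decomposition with a carefully chosen low/high velocity split of the velocity average, closely following the averaging strategy developed for the deterministic porous medium equation in \cite{gess2017sobolev,gess2019optimal}, but replacing the velocity-parametrized Fourier multiplier estimates by the mixed mild/Fourier representation \eqref{eq:mixed_representation_new}. First, localize the average at a dyadic spatial frequency, writing $\mathcal{F}_x^{-1}\hat\varphi_j \mathcal{F}_x \bar{\chi} = \int_v \chi^j\, dv$ with $\chi^j := \mathcal{F}_x^{-1}\hat\varphi_j \mathcal{F}_x \chi$. The goal is a bound of the form $\|\int_v \chi^j\, dv\|_{L^p_{\omega,t,x}} \lesssim 2^{-\kappa_x j}\,(\mathrm{RHS})$ uniform in $j$; summing over the dyadic scales produces the Chemin--Lerner estimate and, via Lemma \ref{Embedding_time_spaces}, the stated regularity.

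For each $j$, I split the velocity integration into a low-velocity region $\{|v|\le\delta_j\}$ and a high-velocity region $\{|v|>\delta_j\}$, with a threshold $\delta_j = 2^{-\theta j}$ to be optimized. On the low-velocity piece, H{\"o}lder's inequality against the a priori bound $\chi\in L^\beta_{\omega,t,x,v}$ produces the factor $\delta_j^{\rho}$. On the high-velocity region, invert the kinetic symbol $\mathcal{L}(i\tau,\xi,v)=(2\pi)(i\tau + m|v|^{m-1}(2\pi)|\xi|^2)$, which has resolvent gain of order $|v|^{-(m-1)} 2^{-2j}$ on the relevant dyadic block. The deterministic contributions from $h$ and $\tilde h$ are then absorbed into the weighted total-variation norms \eqref{condition_singular_moments_statement}: the $|v|^{-(m-1)}$ gain is balanced against $|v|^{1-\gamma}$ for $h$, while the $\partial_v$ acting on $\tilde h$ produces the weight $|v|^{-\gamma}$. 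Optimizing $\theta$ by equating the low- and high-velocity gains yields the averaging exponents $p$ and $\kappa_x$ in \eqref{reg_time_results_before_bootstrap}.

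The delicate contribution is the stochastic term, which is treated via the mild/Fourier representation: the velocity integration is performed \emph{before} squaring, so that the dyadically localized stochastic contribution reads
\begin{equation*}
\sum_{k=1}^\infty \int_0^t \bigl(e^{-m|v|^{m-1}(t-s)\Delta}\,(\varphi_j *_x \mathring{g}_k(s,\cdot,\tilde u(s,\cdot)))\bigr)\Big|_{v=\tilde u(s,x)}\, d\beta_k(s).
\end{equation*}
Applying the Burkholder--Davis--Gundy inequality in $L^p_\omega$ converts this into a square-function estimate over $s\in[0,t]$. On the quadratic variation I would then invoke the distributional identity $\delta_{u=v} g_k = \chi\, \partial_v g_k - \partial_v(\chi g_k)$, integrate by parts, and use real-variable heat-kernel bounds to trade one spatial derivative for a power of $|v|^{(m-1)/2}$, so that the resulting term is controlled by $\nabla \tilde u^{\frac{m+1}{2}-\gamma^\star}$ via the a priori dissipation hypothesis. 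The exponent $\gamma^\star$ in \eqref{definition_gamma_star_statement} is engineered precisely so that this trade-off produces a velocity weight compatible with the one generated on the deterministic side. The It{\^o}-correction piece $G^2\delta_{u=v}$ is split by H{\"o}lder's inequality with conjugate exponents $\varpi,\varkappa$, which after inserting the heat-kernel factor $|v|^{2\nu-1}$ generates the term $\esssup_t\|\mathring{g}_k^2\|_{L^1_{\omega,x}}^{1/\varpi}\|\tilde u^{\varkappa(2\nu-1)}\mathring{g}_k^2\|_{L^1_{\omega,t,x}}^{1/\varkappa}$ appearing on the right-hand side of \eqref{Bounds_Averaging_lemma_statement}.

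The main obstacle is the simultaneous compatibility of three distinct integrability scales: the $L^\beta$ bound governing the low-velocity tail, the $L^p$ averaging bound on the deterministic resolvent, and the $L^2$-type bound on the stochastic integral produced by BDG. Reconciling these requires that all three contributions scale as $2^{-\kappa_x j}$ at the dyadic level, which is the content of the algebraic constraints $\gamma^\star < (m+1)/2$ and $\nu \in (\tfrac12,\tfrac{m}{2})$: the former ensures the a priori dissipation weight is usable when pulled through the resolvent, and the latter controls the near-origin integrability of the heat kernel in $v$. Once this balance is achieved, the $L^p_\omega$ norm is carried through the martingale estimate by BDG, the Chemin--Lerner supremum is taken in $j$, and the embedding of Lemma \ref{Embedding_time_spaces} yields the claim.
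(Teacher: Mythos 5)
Your high‐level scaffolding is right: spatial Littlewood--Paley decomposition, a low/high velocity split, resolvent estimates on the high‐velocity piece, and the mild heat‐kernel representation for the stochastic term are all present in the paper's proof. But there is one genuine conceptual gap and two substantial omissions.

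The gap is in your treatment of the stochastic term. You propose to ``invoke the distributional identity $\delta_{u=v} g_k = \chi\,\partial_v g_k - \partial_v(\chi g_k)$, integrate by parts'' on the quadratic variation. This is precisely the bootstrapping route that the paper's Introduction and Appendix~\ref{averaging_techniques_and_the_stochastic_integral_appendix} argue cannot work: after the integration by parts you are led to control expressions of the type $(-\Delta_x)^{\tilde\alpha/2}\chi$ in $L^2_{\omega,t,x,v}$, and since $\chi$ is an indicator, these are not in $L^2$ once $\tilde\alpha\ge\frac12$, so the argument saturates at one spatial derivative. Moreover, after you have performed the velocity integration (as in the mixed representation \eqref{eq:mixed_representation_new}, which you correctly cite), there is no $\delta_{u=v}$ left to apply the identity to; the identity lives at the level of the noise coefficient \emph{before} the $v$-integral is taken. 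What the paper actually does is genuinely different: it writes $\int_v \chi_j^{1,1}\,dv$ as a stochastic convolution against $\Phi(a(\tilde u(s,y))(t-s),\cdot)$, with the diffusivity frozen at $\tilde u$, and then applies the fundamental theorem of calculus in $v$ to the kernel itself, $\Phi(a(\tilde u)(t-s),\cdot) = \int_0^{\tilde u}\partial_v\bigl[\Phi(a(v)(t-s),\cdot)\psi_1(\cdot)\bigr]dv$, so the $v$-weights fall on the heat kernel rather than on $\chi$. The factor $\gamma^\star$ is then designed so that the leftover weight on $\mathring g_k\nabla\tilde u^{\frac{m+1}{2}-\gamma^\star}$ is exactly absorbed by the a priori dissipation bound; see \eqref{divergence_I_space} and \eqref{Definition_Fka}.

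Two omissions are also worth flagging. First, the paper estimates the stochastic block by It\^o's isometry in $L^2_{\omega,t,x}$ (then relaxes to $L^1$ on the finite domain) rather than using BDG in $L^p_\omega$; the $L^p$ on the left of \eqref{Bounds_Averaging_lemma_statement} is \emph{not} a martingale‐inequality output, it comes from a $K$-functional real interpolation between the $L^1$ bound (stochastic plus kinetic‐measure blocks) and the $L^\beta$ bound (low‐velocity block), with the threshold parameter $\delta$ chosen $z$-dependently. Your ``optimizing $\theta$ by equating gains'' does not reproduce the $L^{\tilde p,\infty}\hookrightarrow L^p$ step which generates the precise exponents in \eqref{reg_time_results_before_bootstrap}. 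Second, you do not mention the $\eta$-split of the time integral into $[0,t-\eta]$ and $[t-\eta,t]$; this is needed because the heat-kernel gradient bounds $\|\nabla_x\dbtilde\Phi(a(v)(t-s),\cdot)\|_{L^1_x}\lesssim (t-s)^{-1/2}$ produce a logarithmically divergent $s$-integral, and equilibrating $\eta$ between the two pieces is how the final $2^{-j}$ power is obtained.
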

	\begin{oss}
		Note that if $ \rho, \gamma$ are chosen close to one and $\epsilon$ chosen small enough, the order of the differentiability and the integrability exponents in \eqref{reg_time_results_before_bootstrap} correspond to $\kappa_x$ close to $\frac{2}{m}$ and the stochastic, time and space integrability equal to $m$ as in Theorem \ref{Main_theorem}.
	\end{oss}
From now on, we omit the $ \omega$-dependence in $\chi$ and $ \tilde{u}$ for notational simplicity.
	\begin{proof} 
We first assume that $ \chi$ is compactly supported with respect to $v$ and then remove this qualitative condition at the end of the proof.

Let $ \mathcal{L}(\partial_t, \nabla_x,v)$ be the differential operator with symbol $	\mathcal{L}(i \tau, \xi, v ) := (2\pi)(i \tau + m \abs{v}^{m-1}(2\pi) \abs{\xi}^2)$.  We decompose $\chi  $ into Littlewood--Paley blocks with respect to the space variable. Let $ \{ \hat{\varphi}_j \}_{j \ge 0}$ be defined as in Section \ref{Notation}. For $j \ge 0$, we define the Littlewood--Paley block of $\chi $ as follows:
		\begin{equation*}
			\chi_{j}= \mathcal{F}_{x}^{-1} [  \hat{\varphi}_j \mathcal{F}_{x}  \chi  ].
		\end{equation*}
		By definition $ \mathcal{F}_{x} \chi_{j} (t,\xi,v)$ is supported on space frequency $ \abs{\xi} \sim 2^j$ for $j > 0$ and on  $ \abs{\xi} \lesssim 1$ for $j=0$.
	In the calculations below, we assume that  $j>0$ unless stated otherwise.
		
Then $\chi_j$ solves, in the sense of distributions,
		\begin{equation} \label{definition_solution_LP_space_distribution}
				 \mathcal{L}(\partial_t, \nabla_x,v) \chi_j   =    e_{j}  +  h_{j}    + \partial_v \tilde{h}_{j},
		\end{equation}
		where the block for the stochastic integral is defined as follows 
		\begin{equation*} 
			e_{j}=  \mathcal{F}_{x}^{-1} \left[  \hat{\varphi}_j \mathcal{F}_{x} \left( \sum_{k=1}^{\infty}  \delta_{\tilde{u}=v}   \mathring{g}_k  \dot{\beta}_k \right) \right],
		\end{equation*}
		and the blocks for $h  $ and $\tilde{h} $ are defined as follows
		\begin{equation} \label{Littlewood_Paley_blocks_for_large_velocities}
			h_{j}= \mathcal{F}_{x}^{-1} [  \hat{\varphi}_j \mathcal{F}_{x}  h], \quad \tilde{h}_{j} = \mathcal{F}_{x}^{-1} [  \hat{\varphi}_j \mathcal{F}_{x} \tilde{h}   ], 
		\end{equation}
		respectively.
		
Let $\psi_0$ be a smooth function with compact support in the ball $B_2(0)$ such that $\psi_0=1$ in  $B_1(0)$ and $\psi_1:=1- \psi_0$. We consider a microlocal decomposition of $\chi_{j}$ with regard to the degeneracy of the operator $ \mathcal{L}(\partial_t, \nabla_x,v)$ on the Fourier block $\abs{\xi}^2 \sim 2^{2j}$. For $ \delta >0$ to be specified later, we write
\begin{equation} \label{decomposition_LP_space_space_averaging}
	\chi_{j}=  \psi_0 \left(\frac{2^{2j} \abs{v} }{\delta} \right)  \chi_{j} + \psi_1 \left(\frac{2^{2j} \abs{v}}{\delta} \right)  \chi_{j} =: \chi^{0}_{j}+  \chi^{1}_{j}.
\end{equation}
From \eqref{definition_solution_LP_space_distribution}, we have 
\begin{equation} \label{decomposition_LP_blocks_space_beginning}
	\begin{split}
		\chi^{1}_{j}  & = \mathcal{F}_{t,x}^{-1} \psi_1 \left(\frac{2^{2j} \abs{v} }{\delta} \right) \frac{1}{ \mathcal{L}(i\tau, \xi,v)} \mathcal{F}_{t,x} e_{j}  + \mathcal{F}_{t,x}^{-1} \psi_1 \left(\frac{2^{2j} \abs{v} }{\delta} \right) \frac{1}{ \mathcal{L}(i\tau, \xi,v)} \mathcal{F}_{t,x}  h_{
			j}   \\  & + \mathcal{F}_{t,x}^{-1} \psi_1 \left(\frac{2^{2j} \abs{v} }{\delta} \right) \frac{1}{ \mathcal{L}(i\tau, \xi,v)} \mathcal{F}_{t,x}  \partial_v  \tilde{h}_{j}  \\  & =: \chi^{1,1}_{j}  + \chi^{1,2}_{j} +  \chi^{1,3}_{j}  .
	\end{split}
\end{equation}
We have obtained the following decomposition
\begin{equation*}
\begin{split}
\bar{\chi}_{j} = \int_v \chi_{j}  \ dv  = \int_{v} \chi_{j}^0  \ dv +  \int_{v} \chi_{j}^{1}   \ dv  = \int_{v} \chi_{j}^0   \ dv   +  \int_{v} \chi_{j}^{1,1} dv +  \int_{v} \chi_{j}^{1,2} dv + \int_{v} \chi_{j}^{1,3} dv .
\end{split}
\end{equation*}
		We proceed in dividing the proof into five steps. In the first four we estimate the velocity averages above and in the final step we interpolate all the estimates. The main novelty relative to \cite[Proof of Lemma 4.2]{gess2019optimal} is the estimate of $\int_{v} \chi_{j}^{1,1} dv$  in Step 2 below which is based on the integral representation of the stochastic term and the control on \eqref{eq:apriori_new} as outlined in the Introduction. Steps 1, 3 and 4 below are identical to \cite[Proof of Lemma 4.2 Steps 1, 2 and 3]{gess2019optimal}, respectively with the difference that here the estimates are also $\omega$-dependent. 
		By Definition \ref{distributional_solution_Averaging_lemmata} and \eqref{decomposition_LP_space_space_averaging}, $\chi^{0}_{j}$ and $ \chi^{1}_{j}$ are compactly supported in $(0,T)$. Therefore, we have
		\begin{equation} \label{Localization_in_time_degenerate_spatial_regularity}
 \left \| \int_v \chi_j^0 \ dv \right \|_{L^{\beta}(\Omega \times \mathbb{R}_t \times \mathbb{T}^d)}  = 	\left \| \int_v \chi_j^0 \ dv  \right \|_{L^{\beta}(\Omega \times [0,T] \times \mathbb{T}^d)},
		\end{equation}
even though each $  \chi^{1,i}_{j} $ for $i=1,2,3$ does not need to be compactly supported in $(0,T)$, we have
		\begin{equation} \label{Localization_in_time_nondegenerate_spatial_regularity}
			\begin{split}
		 \left \| \int_v \chi_j^1 \ dv \right \|_{L^{1}(\Omega \times \mathbb{R}_t \times \mathbb{T}^d)}  & = 	 \left \| \int_v \chi_j^1 \ dv \right \|_{L^{1}(\Omega \times [0,T] \times \mathbb{T}^d)} \\ &  \le \sum_{i=1}^{3}  \left \| \int_v  \chi_j^{1,i} \ dv \right \|_{L^{1}(\Omega \times [0,T] \times \mathbb{T}^d)}.
		 \end{split}
		\end{equation}
For this reason, the norms in Steps 1, 2, 3 and 4 will be evaluated in $t \in [0,T]$.
	 \paragraph{Step 1}  Let $j > 0 $ be arbitrary and fixed. We follow the same arguments as in  \cite [Proof of Lemma 4.2 Step 1]{gess2019optimal}. Since $ \abs{v} \le \delta 2^{-2j}$ on the support of $  \psi_0 \left(\frac{2^{2j} \abs{v}}{\delta} \right)$, we may use Minkowski's and H{\"o}lder's inequality to get
	 
		\begin{equation} \label{velocity_degenerate}
			\begin{split}
		\left\Vert \int_{v} \chi_{j}^0  \ dv \right\Vert_{L^{\beta}_{\omega,t,x}} & = \left\Vert \int_{v}    \psi_0 \left(\frac{2^{2j} \abs{v} }{\delta} \right)  \chi_{j}  \ dv  \right\Vert_{L^{\beta}_{\omega, t,x}} 
		\\ 	& \le   \int_{v}   \abs{ \psi_0} \left(\frac{2^{2j} \abs{v} }{\delta} \right) \| \chi_{j}  \|_{L^{\beta}_{\omega, t,x}} dv  
		\\	&  \lesssim     \| \chi   \|_{L^{\beta}_{\omega,t,x,v}} \left(  \int_{v}   \abs{ \psi_0} \left(\frac{2^{2j} \abs{v} }{\delta} \right)^{\frac{1}{\rho}}  dv  \right)^{\rho}  
		\\ &	\lesssim \frac{\delta^{\rho}}{2^{2j\rho}}    \| \chi   \|_{ L^{\beta}_{\omega,t,x,v}}. 
						\end{split}
\end{equation}
\paragraph{Step 2}  Let $ j > 0 $ be arbitrary and fixed. 
We define 
\begin{equation} \label{definition_av_space_regularity}
	a(v)= m \abs{v}^{m-1},
\end{equation}  
and the periodic heat kernel
\begin{equation*}
	\Phi(t,x) = \begin{cases}\frac{1}{(4 \pi t)^{\frac{d}{2}}} \sum_{n \in \mathbb{Z}^d}  \exp \left(- \frac{\abs{x+n}^2}{4t}\right) & t >0,
		\\ 0  & t < 0. 
	\end{cases}
\end{equation*}
Note that $\chi_j^{1,1}$ in \eqref{decomposition_LP_blocks_space_beginning} is given by

\begin{equation} \label{equivalent_definition_chi11}
	\begin{split}
		\chi_j^{1,1}(t,x,v)  &  = \sum_{k=1}^{\infty}  \int_{s, z} \Phi(a(v)(t-s),x-z)   \psi_1 \left(\frac{2^{2j} \abs{v} }{\delta} \right)  \\ & \times  \int_{y}  \delta_{\tilde{u}(s,y)=v}   \mathring{g}_k(s,y,v)   \varphi_j(z-y)   \ dy  \ d\beta_k(s)  \ dz. 
	\end{split}
\end{equation}
Let 
\begin{equation} \label{Definition_Fk}
 F_{k}(t,x)  :=	 \psi_1 \left(\frac{2^{2j} \abs{\tilde{u}(t,x)} }{\delta} \right)   \mathring{g}_k(t,x,\tilde{u}(t,x)).
\end{equation}
Recall that $\tilde{u}$ is defined on $ [0,T]$. Using \eqref{equivalent_definition_chi11} and \eqref{Definition_Fk}, we have for $\eta>0$ to be determined later 
\begin{equation} \label{Expression_stoch_integral_dec_LP_space}
	\begin{split}
		& \int_{v} \chi_{j}^{1,1}(t,x,v) \ dv  
			\\  & = \sum_{k=1}^{\infty}  \int_{v, s, z} \Phi(a(v)(t-s),x-z)   \psi_1 \left(\frac{2^{2j} \abs{v} }{\delta} \right) 
\\  & \times \int_{y}  \delta_{\tilde{u}(s,y)=v}   \mathring{g}_k(s,y,v)   \varphi_j(z-y)   \ dy  \ d\beta_k(s)  \ dz   \ dv 
	\\ & =  \sum_{k=1}^{\infty}  \int_{z}   \varphi_j(x-z)   \int_{s,y} \Phi(a(\tilde{u}(s,y))(t-s),z-y)    F_{k}(s,y) \  dy \  d\beta_k(s)  \   dz
	\\ & = B(0,t-\eta) + B(t-\eta,t),
	\end{split}
\end{equation}
where 
\begin{equation*}
	\begin{split}
			B(0,t-\eta) &  := \sum_{k=1}^{\infty}  \int_{z}   \varphi_j(x-z)    \int_{0}^{t-\eta} \int_{y} \Phi(a(\tilde{u}(s,y))(t-s),z-y)  F_{k}(s,y)   \  dy \ d\beta_k(s)   \ dz, \\
	 B(t-\eta,t) &  := \sum_{k=1}^{\infty}  \int_{z}   \varphi_j(x-z) \int_{t-\eta}^{t}  \int_{y} \Phi(a(\tilde{u}(s,y))(t-s),z-y)    F_{k}(s,y)  \  dy  \ d\beta_k(s)  \ dz.
	\end{split}
\end{equation*}
In \eqref{Expression_stoch_integral_dec_LP_space}, we have splitted the $s$-integral into $\int_{s \in [0,t- \eta]}$ and $\int_{s \in [t- \eta,t]}$ to avoid dealing with the divergence in $\int_0^t (t-s)^{-1} ds$ which will appear in the estimates \eqref{estimate_Ia}, \eqref{estimate_Ib}, \eqref{estimate_II_first_term} and \eqref{estimate_II_second_term} below. 

 We provide a bound on the first term on the right-hand side of \eqref{Expression_stoch_integral_dec_LP_space}. Suppose that $\tilde{\psi}_1$ is a smooth function such that $\tilde{\psi}_1=1$ on $(B_{1}(0))^c$ and $\tilde{\psi}_1=0$ on $B_{\frac{1}{2}}(0)$.
We have  using Bernstein's lemma\footnote{The proof on $ \mathbb{R}^d $ can be found e.g. in \cite[Lemma 2.1]{bahouri2011fourier}. The proof on $ \mathbb{T}^d$ follows along the same lines.} and It{\^o}'s isometry 
	\begin{equation} \label{Expression_stoch_integral_dec_LP_space_2}
	\begin{split}
			& \| B(0,t-\eta)  \|_{L^2_{\omega, t,x}}^2
\\	& = \Bigg \| \sum_{k=1}^{\infty}  \int_{z}   \varphi_j(x-z)    \int_{0}^{t-\eta} \int_{y} \Phi(a(\tilde{u}(s,y))(t-s),z-y)    F_{k}(s,y)   \  dy \ d\beta_k(s)   \ dz \Bigg \|_{L^2_{\omega, t,x}}^2 	
\\ 		& \lesssim  2^{-4j} \   \int_{t,x}  \sum_{k=1}^{\infty} \mathbb{E} \Bigg|   \int_{0}^{t- \eta} \Delta_x \int_y \Phi(a(\tilde{u}(s,y))(t-s),x-y) F_{k}(s,y)   \ dy \ d\beta_k(s)  \Bigg|^2  dx \ dt 
	\\ & = 2^{-4j} \  \mathbb{E} \int_{t,x} \sum_{k=1}^{\infty}    \int_{0}^{t- \eta} \Bigg| \Delta_x
				\int_{y} \Phi(a(\tilde{u}(s,y))(t-s),x-y) F_{k}(s,y)   \ dy \Bigg|^2 ds \   dx \ dt	
		 	 \\	 & \lesssim 2^{-4j} \Bigg(  \mathbb{E} \int_{t,x} \sum_{k=1}^{\infty}     \int_{0}^{t- \eta} \abs{ \nabla_x \cdot  \rom{1} }^2 \ ds  \  dx \ dt
	 	+    \mathbb{E} \int_{t,x} \sum_{k=1}^{\infty}    \int_{0}^{t- \eta}   \abs{  \nabla_x \cdot  \rom{2}  }^2  \ ds  \  dx \  dt \Bigg),
					\end{split}
\end{equation}
where 
\begin{equation*}
	\begin{split}
 \rom{1}  & := \int_{y}   \nabla_x \Phi(a(\tilde{u}(s,x-y))(t-s),y)   F_{k}(s,x-y) \  dy,  
\\  \rom{2} &   := \int_{y} \Phi(a(\tilde{u}(s,x-y))(t-s),y)   \tilde{\psi}_1 \left(\frac{2^{2j} \abs{\tilde{u}(s,x-y)} }{\delta} \right)   \nabla_x 	F_{k}(s,x-y)  \   dy.
\end{split}
\end{equation*}
In term $\rom{2}$, we have used that $\tilde{\psi}_1=1$ on the support of $\nabla	F_{k}$.

We denote the first and second derivative with respect to the first argument of the periodic heat kernel $ \Phi$ by $ \Phi_1$ and $ \Phi_{11}$, respectively. Let
\begin{equation*}
	\begin{split}
		\tilde{\Phi} (t,x)   := \Phi_1 (t,x) t, \qquad  \qquad \dbtilde{\Phi} (t,x)   := \Phi_{11}(t,x) t^2  + \tilde{\Phi} (t,x).
	\end{split}
\end{equation*} 
Note that 
\begin{equation} \label{_no_grad_heat_kernel_estimate_tilde_before}
	\begin{split}
		\|  \tilde{\Phi} ( t,\cdot) \|_{L^1_x} \lesssim 1, 
	\end{split}
\end{equation}
and
\begin{equation} \label{heat_kernel_estimate_tilde_before}
	\begin{split}
	\| \nabla_x \Phi( t,\cdot) \|_{L^1_x},		\| \nabla_x \tilde{\Phi} ( t,\cdot) \|_{L^1_x}, \| \nabla_x \dbtilde{\Phi} ( t,\cdot) \|_{L^1_x} \lesssim t^{- \frac{1}{2}}. 
	\end{split}
\end{equation}
  Let
\begin{equation} \label{Definition_Fka}
	\begin{split}
		F_{k,a}(t,x)  & :=  \abs{\tilde{u}(t,x)}^{-(\frac{m+1}{2} - \gamma^{\star} )} \tilde{\psi}_1 \left(\frac{2^{2j} \abs{\tilde{u}(t,x)} }{\delta} \right)   \frac{\nabla_x a(\tilde{u}(t,x))}{a(\tilde{u}(t,x))}  \abs{\tilde{u}(t,x)}^{\frac{m+1}{2} - \gamma^{\star} } \\ & \times  \mathring{g}_k(t,x,\tilde{u}(t,x)).
	\end{split}
\end{equation}
We can use \eqref{Definition_Fka} to treat the term $ \nabla_x \cdot	 \rom{1}$ on the right-hand side of  \eqref{Expression_stoch_integral_dec_LP_space_2} as follows
\begin{equation} \label{divergence_I_space}
	\begin{split}
& \nabla_x \cdot	 \rom{1} \\ &  =  \nabla_x \cdot	 \int_{y}   \nabla_x \Phi(a(\tilde{u}(s,x-y))(t-s),y)  F_{k}(s,x-y)   \ dy
\\ & =  \nabla_x \cdot	 \int_{y}  	\tilde{\Phi} (a(\tilde{u}(s,x-y))(t-s),y)  \ \psi_1 \left(\frac{2^{2j} \abs{\tilde{u}(s,x-y)} }{\delta} \right) 	F_{k,a}(s,x-y) \ dy
\\ & = \nabla_x \cdot	  \int_{y}  \int_{0}^{\tilde{u}(s,y)}	 \partial_v \left(	\tilde{\Phi} (a(v)(t-s),x-y)   \psi_1 \left(\frac{2^{2j} \abs{v} }{\delta} \right) \right) 	  \ dv \  F_{k,a}(s,y) \ dy 
\\    &  =   \int_{v,y} \abs{v}^{-\nu} \mathbbm{1}_{[0,\tilde{u}(s,y)]}(v)     \nabla_x \dbtilde{\Phi} (a(v)(t-s),x-y)   \psi_1 \left(\frac{2^{2j} \abs{v} }{\delta} \right) \frac{a^{'}(v)}{a(v)} \abs{v}^{\nu} \cdot  F_{k,a}(s,y)   \ dy \ dv
\\  &  + \int_{v,y} \abs{v}^{-\nu} \mathbbm{1}_{[0,\tilde{u}(s,y)]}(v)    \nabla_x \tilde{\Phi} (a(v)(t-s),x-y) \abs{v}^{-1+ \nu }  \\ & \times  \psi_1^{'} \left(\frac{2^{2j} \abs{v} }{\delta} \right) \frac{2^{2j} \abs{v} \sgn(v)}{\delta} \cdot  F_{k,a}(s,y)  \ dy \ dv.
\end{split}
\end{equation}		
Recall that $\nu \in (\frac{1}{2}, \frac{m}{2})$. Using \eqref{definition_av_space_regularity}, we have
\begin{equation} \label{bound_a_prime_v_space}
	\begin{split}
	\int_{ \{\abs{v} \gtrsim 2^{- 2j} \delta \}}   \frac{(a^{'}(v))^2 }{   (a(v))^3 }  \abs{v}^{2 \nu}      \ dv \lesssim ( 2^{- 2j} \delta)^{-m + 2 \nu }.
	\end{split}
\end{equation}
Using \eqref{Definition_Fka}, observe that
\begin{equation} \label{estimate_Fka}
	\begin{split}
		\|   F_{k,a}(t,\cdot)  \|_{L^2_{\omega,x}} \lesssim    (2^{- 2j} \delta)^{ -(\frac{m+1}{2}- \gamma^{\star})}    \| \mathring{g}_k(t,\cdot,\tilde{u})   \nabla \tilde{u}^{\frac{m+1}{2} - \gamma^{\star} }    \|_{L^2_{\omega,x}}.
	\end{split}
\end{equation}
 We estimate the first term on the right-hand side of \eqref{Expression_stoch_integral_dec_LP_space_2} using the expression \eqref{divergence_I_space}. For this derivation, we mainly rely on the heat kernel estimates \eqref{heat_kernel_estimate_tilde_before} and the assumption that $ \mathring{g}_k(\tilde{u}) \nabla  \tilde{u}^{\frac{m+1}{2}- \gamma^{\star} }  \in L^2_{\omega,t,x}$.
We begin estimating the first term on the right-hand side of \eqref{divergence_I_space}. Using Cauchy--Schwarz inequality, Young's convolution inequality, estimates \eqref{heat_kernel_estimate_tilde_before}, \eqref{bound_a_prime_v_space} and \eqref{estimate_Fka}, we have
\begin{equation*} 
		\begin{split}
& \mathbb{E} \int_{t,x}   \sum_{k=1}^{\infty}   \int_{0}^{t- \eta} \Bigg|  \int_{v,y} \abs{v}^{- \nu } \mathbbm{1}_{[0,\tilde{u}(s,y)]}(v)      \nabla_x \dbtilde{\Phi} (a(v)(t-s),x-y)  \\ & \times  \psi_1 \left(\frac{2^{2j} \abs{v} }{\delta} \right) \frac{a^{'}(v)}{a(v)} \abs{v}^{\nu }  \cdot  F_{k,a}(s,y) \  dy \ dv \Bigg|^2  ds  \  dx  \ dt 
\\ & \le  \mathbb{E} \int_{t} \sum_{k=1}^{\infty}  \int_{0}^{t- \eta}  \int_{ \{\abs{v} \ge 2^{- 2j} \delta \} }  \abs{v}^{-2 \nu} dv  \   \int_{ \{\abs{v} \ge 2^{- 2j} \delta \} }   \left( \frac{a^{'}(v) }{   a(v) } \right)^2 \abs{v}^{2 \nu} \\ & \times \int_x \Bigg|  \int_{y} \nabla_x \dbtilde{\Phi} (a(v)(t-s),x-y)    \cdot  F_{k,a}(s,y) \  dy    \Bigg|^2 \ dx \ dv \ ds    \ dt
\end{split}
\end{equation*}
\begin{equation} \label{estimate_Ia}
	\begin{split}
	& \lesssim   (2^{- 2j} \delta)^{-2 \nu +1 }  \int_{t}  \sum_{k=1}^{\infty} \int_{0}^{t- \eta}    \int_{ \{\abs{v} \ge 2^{- 2j} \delta \} } \left( \frac{a^{'}(v) }{   a(v) } \right)^2 \abs{v}^{2 \nu}  \\ & \times   \| \nabla_x \dbtilde{\Phi} (a(v)(t-s),\cdot) \|_{L^1_x}^2  \mathbb{E}   \|   F_{k,a}(s,\cdot)  \|^2_{L^2_x} \ dv \  ds \   dt
\\  & \lesssim  (2^{- 2j} \delta)^{-2 \nu +1 }  \int_{ \{\abs{v} \ge 2^{- 2j} \delta \}}   \frac{(a^{'}(v))^2 }{   (a(v))^3 }  \abs{v}^{2 \nu}      \ dv   \int_t \sum_{k=1}^{\infty} \int_{0}^{t- \eta}   \frac{\mathbb{E}   \|   F_{k,a}(s,\cdot)  \|^2_{L^2_x}}{t-s}  ds    \ dt
  \\ & \lesssim  (2^{- 2j} \delta)^{-2 \nu +1 } ( 2^{- 2j} \delta)^{-m + 2 \nu }   \int_t \sum_{k=1}^{\infty} \int_{0}^{t- \eta}   \frac{\mathbb{E}   \|   F_{k,a}(s,\cdot)  \|^2_{L^2_x}}{t-s}  ds    \ dt  
\\ & \lesssim \abs{ \log(\eta)}  (2^{- 2j} \delta)^{1 -m -2(\frac{m+1}{2}- \gamma^{\star})}  \sum_{k=1}^{\infty}        \| \mathring{g}_k   \nabla \tilde{u}^{\frac{m+1}{2} - \gamma^{\star} }    \|^2_{L^2_{\omega,t,x}}.
\end{split}
\end{equation}
The second term on the right-hand side of \eqref{divergence_I_space} is estimated similarly since $\abs{v} \sim 2^{-2j} \delta$ on the support of $\psi_1^{'} \left( \frac{2^{2j} \abs{v}}{\delta} \right)$. We have
	\begin{equation} \label{estimate_Ib}
	\begin{split}
		&    \mathbb{E} \int_{t,x} \sum_{k=1}^{\infty} \int_{0}^{t- \eta}  \Bigg|  \int_{v,y} \abs{v}^{- \nu } \mathbbm{1}_{[0,\tilde{u}(s,y)]}(v) \   \nabla_x \tilde{\Phi} (a(v)(t-s),x-y) \abs{v}^{-1+ \nu} \\ & \times   \psi_1^{'} \left(\frac{2^{2j} \abs{v} }{\delta} \right) \frac{2^{2j} \abs{v} \sgn(v)}{\delta}  \cdot  F_{k,a}(s,y) \ dy \ dv  \Bigg|^2  ds   \  dx  \ dt
\\ 	 & \lesssim \abs{\log(\eta)}  (2^{-2j} \delta)^{-(m-1) -2(\frac{m+1}{2} - \gamma^{\star})} \sum_{k=1}^{\infty}       \| \mathring{g}_k   \nabla \tilde{u}^{\frac{m+1}{2} - \gamma^{\star} }  \|^2_{L^2_{\omega,t,x}}     .
	\end{split}
\end{equation}
The first term on the right-hand side of \eqref{Expression_stoch_integral_dec_LP_space_2}  is estimated using \eqref{estimate_Ia} and \eqref{estimate_Ib}
\begin{equation} \label{estimate_I_conclusion_before_Bernstein_term}
	\begin{split}
		&  \mathbb{E} \int_{t,x} \sum_{k=1}^{\infty}    \int_{0}^{t- \eta}   \abs{ \nabla_x \cdot  \rom{1}  }^2 \  ds  \  dx \ dt \\ &  \lesssim \abs{\log(\eta)}  (2^{-2j} \delta)^{-(m-1) -2(\frac{m+1}{2} - \gamma^{\star})} \sum_{k=1}^{\infty}       \| \mathring{g}_k   \nabla \tilde{u}^{\frac{m+1}{2} - \gamma^{\star} }  \|^2_{L^2_{\omega,t,x}} .
	\end{split}
\end{equation}
The term $ \nabla_x \cdot	 \rom{2}$ on the right-hand side of  \eqref{Expression_stoch_integral_dec_LP_space_2} is treated as follows
\begin{equation*} 
	\begin{split}
		&  \nabla_x \cdot  \rom{2}   \\ & =   \nabla_x \cdot		\int_{y} \Phi(a(\tilde{u}(s,x-y))(t-s),y)    \tilde{\psi}_1 \left(\frac{2^{2j} \abs{\tilde{u}(s,x-y)} }{\delta} \right)    \nabla_x F_{k}(s,x-y) \ dy 
	\\	 & =    \nabla_x \cdot		\int_{y,v}  \mathbbm{1}_{[0,\tilde{u}(s,y)]}(v)   \ \partial_v \left( \Phi(a(v)(t-s),x-y) \tilde{\psi}_1 \left(\frac{2^{2j} \abs{v} }{\delta} \right)   \right)  \ dv \   \nabla_y F_{k}(s,y)  \ dy 
	\\	& =      		\int_{v,y} \abs{v}^{- \nu} \mathbbm{1}_{[0,\tilde{u}(s,y)]}(v)  \nabla_x \tilde{\Phi}(a(v)(t-s),x-y)   \frac{a^{'}(v)}{a(v)} \abs{v}^{\nu} \tilde{\psi}_1 \left(\frac{2^{2j} \abs{v} }{\delta} \right)  \   \nabla_y F_{k}(s,y)  \ dy \ dv 
		\end{split}
	\end{equation*}
\begin{equation} \label{Treat_II_term}
	\begin{split}
		 & + 	\int_{v,y}   \abs{v}^{- \nu} \mathbbm{1}_{[0,\tilde{u}(s,y)]}(v)   \nabla_x \Phi(a(v)(t-s),x-y) \abs{v}^{-1+ \nu} \\ & \times  \tilde{\psi}^{'}_1 \left(\frac{2^{2j} \abs{v} }{\delta} \right) \frac{2^{2j} \abs{v} \sgn(v)}{\delta}     \   \nabla_y F_{k}(s,y)  \ dy \ dv.
	\end{split}
\end{equation}
Recall that  $\gamma^{\star} <  \frac{m+1}{2}$. Using \eqref{Definition_Fk}, we have  
\begin{equation} \label{Gradient_Fkjdelta}
	\begin{split}	
		&	\nabla_x  F_{k}(t,x) 
		\\ & =  \abs{\tilde{u}(t,x)}^{-(\frac{m+1}{2} - \gamma^{\star} )}     \psi_1^{'} \left(\frac{2^{2j} \abs{\tilde{u}(t,x)} }{\delta} \right) \frac{2^{2j}\abs{\tilde{u}(t,x)}}{\delta } \sgn(\tilde{u}(t,x))\\ & \times  \nabla_x \tilde{u}(t,x)  \abs{\tilde{u}(t,x)}^{\frac{m+1}{2} - \gamma^{\star} -1 }   \mathring{g}_k(t,x,\tilde{u}(t,x)) \\ & +  \abs{\tilde{u}(t,x)}^{-(\frac{m+1}{2} - \gamma^{\star} )}  \psi_1 \left(\frac{2^{2j} \abs{\tilde{u}(t,x)} }{\delta} \right) \abs{\tilde{u}(t,x)}^{\frac{m+1}{2} - \gamma^{\star} }  \nabla_x \mathring{g}_k(t,x,\tilde{u}(t,x)).
	\end{split}
\end{equation}
Note that
\begin{equation} \label{estimate_gradient_Fk}
	\begin{split}
		& \|  	\nabla_x  F_{k}(t,\cdot)   \|_{L^2_{\omega,x}} \\ & \lesssim    (2^{- 2j} \delta)^{ -(\frac{m+1}{2}- \gamma^{\star})} \left(   \| \mathring{g}_k(t,\cdot,\tilde{u})   \nabla \tilde{u}^{\frac{m+1}{2} - \gamma^{\star} }    \|_{L^2_{\omega,x}} +  \|      \tilde{u}^{\frac{m+1}{2} - \gamma^{\star} }  \nabla \mathring{g}_k(t,\cdot,\tilde{u})  \|_{L^2_{\omega,x}} \right).
	\end{split}
\end{equation}
 We estimate the second term on the right-hand side of \eqref{Expression_stoch_integral_dec_LP_space_2} using the expression \eqref{Treat_II_term}. For this derivation, we mainly rely on the heat kernel estimates \eqref{heat_kernel_estimate_tilde_before} and the assumption that $ \mathring{g}_k(\tilde{u}) \nabla  \tilde{u}^{\frac{m+1}{2}- \gamma^{\star} }, \tilde{u}^{\frac{m+1}{2}- \gamma^{\star} } \nabla \mathring{g}_k(\tilde{u}) \in L^2_{\omega,t,x}$. We begin estimating the first term on the right-hand side of \eqref{Treat_II_term}. Using  Cauchy--Schwarz inequality, Young's convolution inequality, estimates \eqref{heat_kernel_estimate_tilde_before}, \eqref{bound_a_prime_v_space} and \eqref{estimate_gradient_Fk} we have
\begin{equation} \label{estimate_II_first_term}
	\begin{split}
&   \mathbb{E} \int_{t,x} \sum_{k=1}^{\infty} \int_{0}^{t- \eta}  \Bigg| 	\int_{v,y}  \abs{v}^{- \nu} \mathbbm{1}_{[0,\tilde{u}(s,y)]}(v)  \nabla_x \tilde{\Phi}(a(v)(t-s),x-y)   \frac{a^{'}(v)}{a(v)} \abs{v}^{\nu}  \\ & \times  \tilde{\psi}_1 \left(\frac{2^{2j} \abs{v} }{\delta} \right)\   \nabla_y F_{k}(s,y)  \ dy  \ dv   \Bigg|^2 ds  \  dx \ dt
\\ & \le    \mathbb{E} \int_{t} \sum_{k=1}^{\infty} \int_0^{t- \eta}        \int_{\{\abs{v} \gtrsim 2^{- 2j} \delta  \} } \abs{v}^{-2 \nu} dv   \int_{ \{\abs{v} \gtrsim 2^{- 2j} \delta  \} }  \left(\frac{ a^{'}(v) }{ a(v)}  \right)^2 \abs{v}^{2 \nu}
\\			 & \times \int_x	\Bigg|  \int_{y} \nabla_x \tilde{\Phi}(a(v)(t-s),x-y)  \cdot  \nabla_y F_{k}(s,y)  \  dy \Bigg|^2  dx \ dv \ ds  \ dt
	\\	& \lesssim (2^{- 2j} \delta)^{-2 \nu +1 } \int_{ \{\abs{v} \gtrsim 2^{- 2j} \delta  \} }  \frac{(a^{'}(v))^2 }{   (a(v))^3 }  \abs{v}^{2 \nu}	dv  \  \mathbb{E} \int_{t}   \sum_{k=1}^{\infty}  \int_0^{t- \eta}     \frac{\| \nabla_x  F_{k}(s,\cdot) \|^2_{L^2_x} }{t-s}       \ ds   \ dt \\
& \lesssim   \abs{\log(\eta)} (2^{- 2j} \delta)^{1 -m -2(\frac{m+1}{2} - \gamma^{\star})} \left(\sum_{k=1}^{\infty}       \| \mathring{g}_k   \nabla  \tilde{u}^{\frac{m+1}{2}- \gamma^{\star}}   \|^2_{L^2_{\omega,t,x}}  +  \sum_{k=1}^{\infty}    \|      \tilde{u}^{\frac{m+1}{2} - \gamma^{\star} }  \nabla \mathring{g}_k  \|^2_{L^2_{\omega,t,x}} \right).
			\end{split}
\end{equation}
The second term on the right-hand side of \eqref{Treat_II_term} is estimated similarly since $\abs{v} \sim 2^{-2j} \delta$ on the support of $\tilde{\psi}_1^{'} \left( \frac{2^{2j} \abs{v}}{\delta} \right)$. 
We have 
\begin{equation} \label{estimate_II_second_term}
	\begin{split}
		&   \mathbb{E} \int_{t,x} \sum_{k=1}^{\infty} \int_{0}^{t- \eta}  \Bigg| 	\int_{v,y} \abs{v}^{- \nu}  \mathbbm{1}_{[0,\tilde{u}(s,y)]}(v)   \nabla_x \Phi(a(v)(t-s),x-y) \abs{v}^{-1+ \nu} \\ & \times \tilde{\psi}^{'}_1 \left(\frac{2^{2j} \abs{v} }{\delta} \right) \frac{2^{2j} \abs{v} \sgn(v)}{\delta}     \   \nabla_y F_{k}(s,y)  \ dy \ dv \Bigg|^2 ds  \  dx \ dt
		\\	 & \lesssim \abs{\log(\eta)}  (2^{-2j} \delta)^{-(m-1) -2(\frac{m+1}{2} - \gamma^{\star})} \Bigg(\sum_{k=1}^{\infty}       \| \mathring{g}_k   \nabla  \tilde{u}^{\frac{m+1}{2}- \gamma^{\star}}   \|^2_{L^2_{\omega,t,x}}  \\ &     +  \sum_{k=1}^{\infty}    \|      \tilde{u}^{\frac{m+1}{2} - \gamma^{\star} }  \nabla \mathring{g}_k  \|^2_{L^2_{\omega,t,x}} \Bigg) .
	\end{split}
\end{equation}
The second term on the right-hand side of \eqref{Expression_stoch_integral_dec_LP_space_2}  is estimated using \eqref{estimate_II_first_term} and \eqref{estimate_II_second_term}
\begin{equation} \label{estimate_II_conclusion_before_Bernstein_term}
	\begin{split}
	&  \mathbb{E} \int_{t,x} \sum_{k=1}^{\infty}    \int_{0}^{t- \eta}   \abs{ \nabla_x \cdot  \rom{2}  }^2 \  ds  \  dx \ dt \\ &  \lesssim \abs{\log(\eta)}  (2^{-2j} \delta)^{-(m-1) -2(\frac{m+1}{2} - \gamma^{\star})}  \Bigg(\sum_{k=1}^{\infty}       \| \mathring{g}_k   \nabla  \tilde{u}^{\frac{m+1}{2}- \gamma^{\star}}   \|^2_{L^2_{\omega,t,x}} 
	 \\ &    +  \sum_{k=1}^{\infty}    \|      \tilde{u}^{\frac{m+1}{2} - \gamma^{\star} }  \nabla \mathring{g}_k  \|^2_{L^2_{\omega,t,x}} \Bigg) .
	\end{split}
\end{equation}
Combining \eqref{estimate_I_conclusion_before_Bernstein_term} and \eqref{estimate_II_conclusion_before_Bernstein_term}, we have obtained the following estimate for \eqref{Expression_stoch_integral_dec_LP_space_2} 
\begin{equation} \label{estimate_t_small_second_step}
	\begin{split}
  \| B(0,t-\eta)  \|_{L^2_{\omega, t,x}}^2 
 & \lesssim  \abs{\log(\eta)} 2^{-4j}   (2^{- 2j} \delta)^{1 -m -2(\frac{m+1}{2} - \gamma^{\star})}       \Bigg(\sum_{k=1}^{\infty}       \| \mathring{g}_k   \nabla  \tilde{u}^{\frac{m+1}{2}- \gamma^{\star}}   \|^2_{L^2_{\omega,t,x}}    \\ &   +  \sum_{k=1}^{\infty}    \|      \tilde{u}^{\frac{m+1}{2} - \gamma^{\star} }  \nabla \mathring{g}_k  \|^2_{L^2_{\omega,t,x}} \Bigg)
\\ & \lesssim \eta^{\epsilon} \ 2^{-4j}   (2^{- 2j} \delta)^{1 -m -2(\frac{m+1}{2} - \gamma^{\star})}    \Bigg(\sum_{k=1}^{\infty}       \| \mathring{g}_k   \nabla  \tilde{u}^{\frac{m+1}{2}- \gamma^{\star}}   \|^2_{L^2_{\omega,t,x}}    \\ &   +  \sum_{k=1}^{\infty}    \|      \tilde{u}^{\frac{m+1}{2} - \gamma^{\star} }  \nabla \mathring{g}_k  \|^2_{L^2_{\omega,t,x}} \Bigg).
	\end{split}
\end{equation}
	We then provide a bound on the second term on the right-hand side of \eqref{Expression_stoch_integral_dec_LP_space}. Using Young's convolution inequality and It{\^o}'s isometry, we have 
	
\begin{equation} \label{Expression_stoch_integral_dec_LP_space_2_B}
	\begin{split}
		 	& \| B(t-\eta,t)  \|_{L^2_{\omega, t,x}}^2 \\ & =  \Bigg \|  \sum_{k=1}^{\infty}  \int_{z}   \varphi_j(x-z) \int_{t-\eta}^{t}  \int_{y} \Phi(a(\tilde{u}(s,y))(t-s),z-y) \\ & \times  F_{k}(s,y)  \  dy  \ d\beta_k(s)  \ dz \Bigg \|_{L^2_{\omega, t,x}}^2 
		 	\\ 	& \lesssim \Bigg \|  \sum_{k=1}^{\infty}   \int_{t-\eta}^{t}  \int_{y} \Phi(a(\tilde{u}(s,y))(t-s),x-y) F_{k}(s,y) \  dy  \ d\beta_k(s)  \Bigg \|_{L^2_{\omega, t,x}}^2 
	\\	& =   \int_{t,x}    \mathbb{E} \int_{t-\eta}^{t} \sum_{k=1}^{\infty}   \Bigg| 
		\int_{y} \Phi(a(\tilde{u}(s,y))(t-s),x-y)   F_{k}(s,y)    \ dy \Bigg|^2 ds \   dx \ dt	 
\\	& =    \int_{t,x}     \mathbb{E} \int_{t-\eta}^{t}  \sum_{k=1}^{\infty}   \Bigg| 
	\int_{y,v}   \mathbbm{1}_{[0,\tilde{u}(s,y)]}(v) \ \partial_v \left( \Phi(a(v)(t-s),x-y) \psi_1 \left(\frac{2^{2j} \abs{v} }{\delta} \right) \right)   \ dv \\ & \times  \mathring{g}_k(s,y,\tilde{u}(s,y)) \ dy \Bigg|^2 ds \   dx \ dt
			\\	& \lesssim     \int_{t,x}     \mathbb{E} \int_{t-\eta}^{t}  \sum_{k=1}^{\infty}  | \rom{3} |^2   \ ds \   dx \ dt +       \int_{t,x}     \mathbb{E} \int_{t-\eta}^{t}  \sum_{k=1}^{\infty}  | \rom{4} |^2   \ ds \   dx \ dt,
\end{split}
\end{equation}
where
\begin{equation*}
	\begin{split}
		\rom{3} & := \int_{v,y}  \abs{v}^{- \nu}  \psi_1 \left(\frac{2^{2j} \abs{v} }{\delta} \right) \mathbbm{1}_{[0,\tilde{u}(s,y)]}(v)   \tilde{\Phi}(a(v)(t-s),x-y)  \frac{a^{'}(v)}{a(v)} \abs{v}^{\nu} \\ &  \times  \mathring{g}_k(s,y,\tilde{u}(s,y)) \ dy  \ dv ,
		\\  \rom{4} & := \int_{v,y}  \abs{v}^{- \nu}   \mathbbm{1}_{[0,\tilde{u}(s,y)]}(v)  \Phi(a(v)(t-s),x-y)  \abs{v}^{-1+\nu} \psi_1^{'} \left(\frac{2^{2j} \abs{v} }{\delta} \right) \frac{2^{2j} \abs{v}\sgn(v)}{\delta}   \\ &  \times  \mathring{g}_k(s,y,\tilde{u}(s,y))  \ dy  \ dv.
	\end{split}
\end{equation*}
Recall that $\varpi^{-1}+ \varkappa^{-1} =1 $. We estimate the first term on the right-hand side of \eqref{Expression_stoch_integral_dec_LP_space_2_B} using Cauchy--Schwarz inequality, Young's convolution inequality, estimate \eqref{_no_grad_heat_kernel_estimate_tilde_before}   and H{\"o}lder's inequality
\begin{equation*} 
	\begin{split}
		 &	  \int_{t,x}     \mathbb{E} \int_{t-\eta}^{t}  \sum_{k=1}^{\infty}  | \rom{3} |^2   \ ds \   dx \ dt 
		 	\\ &	=  \int_{t,x}     \mathbb{E} \int_{t-\eta}^{t}  \sum_{k=1}^{\infty}  \Bigg| \int_{v}  \abs{v}^{- \nu}  \psi_1 \left(\frac{2^{2j} \abs{v} }{\delta} \right) \int_y  \mathbbm{1}_{[0,\tilde{u}(s,y)]}(v)  \tilde{\Phi}(a(v)(t-s),x-y)   \frac{a^{'}(v)}{a(v)}  \abs{v}^{ \nu} 
		 							\end{split}
	 	\end{equation*} 
 	\begin{equation} \label{Estimate_III_space_regularity}
 \begin{split}	
		  & \times    \mathring{g}_k(s,y,\tilde{u}(s,y))  \ dy  \ dv  \Bigg|^2   \ ds \   dx \ dt 
		 \\	&	\lesssim  \int_{t}     \mathbb{E} \int_{t-\eta}^{t}  \sum_{k=1}^{\infty}  \int_{ \{\abs{v} \ge  2^{-2j} \delta  \}}   \abs{v}^{- 2 \nu}  \ dv \int_v  \abs{v}^{2( \nu-1)} \int_x \Bigg|  \int_y \mathbbm{1}_{[0,\tilde{u}(s,y)]}(v)   \tilde{\Phi}(a(v)(t-s),x-y)      \\ & \times  \mathring{g}_k(s,y,\tilde{u}(s,y))  \ dy    \Bigg|^2     dx \ dv    \ ds \ dt 	
		 \\	 & \lesssim  (2^{-2j} \delta)^{- 2 \nu +1}  \int_{t}     \mathbb{E} \int_{t-\eta}^{t}  \sum_{k=1}^{\infty}  \int_{v}    \abs{v}^{2( \nu-1)}      \| \tilde{\Phi}(a(v)(t-s),\cdot) \|_{L^1_x}^2 \\ & \times  \int_{x}        \mathbbm{1}_{[0,\tilde{u}(s,x)]}(v)   | \mathring{g}_k(s,x,\tilde{u}(s,x))  |^2       \ dx     \ dv    \ ds \ dt
		 \\ & \lesssim  (2^{-2j} \delta)^{- 2 \nu +1}  \int_{t}     \mathbb{E} \int_{t-\eta}^{t}  \sum_{k=1}^{\infty}   \int_{x,v}  \abs{v}^{2( \nu-1)}        \mathbbm{1}_{[0,\tilde{u}(s,x)]}(v)   | \mathring{g}_k(s,x,\tilde{u}(s,x))  |^2     \ dv  \ dx   \ ds \ dt 
\\		 & =  (2^{-2j} \delta)^{- 2 \nu +1}  \int_{t}  \mathbb{E}    \int_{t-\eta}^{t}    \sum_{k=1}^{\infty}   \int_x   \abs{\tilde{u}(s,x)}^{2 \nu-1}           | \mathring{g}_k(s,x,\tilde{u}(s,x)) |^{2(\frac{1}{\varpi} + \frac{1}{\varkappa})}       \ dx   \ ds \ dt 
\\ &   \le (2^{-2j} \delta)^{- 2 \nu +1}  \int_t  \mathbb{E} \int_{t-\eta}^t \sum_{k=1}^{\infty}   \left( \int_x | \mathring{g}_k(s,x,\tilde{u}(s,x))     |^{2 } dx \right)^{\frac{1}{\varpi}} \\ & \times  \left( \int_x | \tilde{u}(s,x) |^{\varkappa(2 \nu - 1)}     | \mathring{g}_k(s,x,\tilde{u}(s,x))     |^{2} \  dx \right)^{\frac{1}{\varkappa}}    \  ds \ dt		
\\		 & \le  (2^{-2j} \delta)^{- 2 \nu +1}   \eta^{\frac{1}{\varpi}}  \int_t \sum_{k=1}^{\infty}   \esssup_{s \in [t-\eta,t]}   \left \|  \mathring{g}_k^2 (s,\cdot,\tilde{u})  \right \|_{L^1_{\omega,x}}^{\frac{1}{\varpi}}  \left( \int_{t-\eta}^t  \left \|   \tilde{u}(s,\cdot)^{\varkappa(2 \nu - 1)}   \mathring{g}_k^2 (s,\cdot,\tilde{u})   \right \|_{L^1_{\omega, x} }      \   ds \right)^{\frac{1}{\varkappa}} dt	 
\\  & \lesssim (2^{-2j} \delta)^{- 2 \nu +1}  \eta^{\frac{1}{\varpi}}        \sum_{k=1}^{\infty}   \esssup_{s \in [0,T]}   \left \|  \mathring{g}_k^2(s,\cdot,\tilde{u})   \right \|_{L^1_{\omega,x}}^{\frac{1}{\varpi}}  	
 \left \|    \tilde{u}^{\varkappa(2 \nu - 1)}  \mathring{g}_k^2   \right \|_{L^1_{\omega,t,x}}^{\frac{1}{\varkappa}}.
	\end{split}
\end{equation}
The second term on the right-hand side of \eqref{Expression_stoch_integral_dec_LP_space_2_B} is estimated similarly since $\abs{v} \sim 2^{-2j} \delta$ on the support of $\psi_1^{'} \left( \frac{2^{2j} \abs{v}}{\delta} \right)$. 
We have 
\begin{equation} \label{Estimate_iV_space_regularity}
	\begin{split}
		&	  \int_{t,x}     \mathbb{E} \int_{t-\eta}^{t}  \sum_{k=1}^{\infty}  | \rom{4} |^2   \ ds \   dx \ dt \\ & \lesssim (2^{-2j} \delta)^{- 2 \nu +1}  \eta^{\frac{1}{\varpi}}        \sum_{k=1}^{\infty}   \esssup_{s \in [0,T]}   \left \|  \mathring{g}_k^2(s,\cdot,\tilde{u})   \right \|_{L^1_{\omega,x}}^{\frac{1}{\varpi}}    \left \|    \tilde{u}^{\varkappa(2 \nu - 1)}  \mathring{g}_k^2   \right \|_{L^1_{\omega,t,x}}^{\frac{1}{\varkappa}}.	
	\end{split}
\end{equation}
Combining \eqref{Estimate_III_space_regularity} and \eqref{Estimate_iV_space_regularity}, we have obtained the following estimate for \eqref{Expression_stoch_integral_dec_LP_space_2_B} \begin{equation*}  \label{Estimate_B_t_small_spatial_regularity}
		\| B(t-\eta,t)  \|_{L^2_{\omega, t,x}}^2 
	\end{equation*}
 \begin{equation}  \label{Estimate_B_t_small_spatial_regularity}
		 \lesssim (2^{-2j} \delta)^{- 2 \nu +1}  \eta^{\frac{1}{\varpi}}        \sum_{k=1}^{\infty}   \esssup_{s \in [0,T]}   \left \|  \mathring{g}_k^2 (s,\cdot,\tilde{u}) \right \|_{L^1_{\omega,x}}^{\frac{1}{\varpi}}  \left \|    \tilde{u}^{\varkappa(2 \nu - 1)}  \mathring{g}_k^2   \right \|_{L^1_{\omega,t,x}}^{\frac{1}{\varkappa}}. 	
\end{equation}
We denote
\begin{equation*}
	\begin{split}
	\mathcal{K}^1 & =  \left(  \sum_{k=1}^{\infty}     \|  \mathring{g}_k   \nabla  \tilde{u}^{\frac{m+1}{2}- \gamma^{\star} }   \|_{L^2_{\omega,t,x}}^2 \right)^{\frac{1}{2}} + \left(  \sum_{k=1}^{\infty}     \| \tilde{u}^{\frac{m+1}{2}- \gamma^{\star} }  \nabla \mathring{g}_k      \|_{L^2_{\omega,t,x}}^2 \right)^{\frac{1}{2}}  \\ &    +   \left( \sum_{k=1}^{\infty}  \esssup_{t \in [0,T]}    \left \|    \mathring{g}_k^2(t,\cdot,\tilde{u})  \right \|_{L^1_{\omega,x}}^{\frac{1}{\varpi}}   \left \|    \tilde{u}^{\varkappa(2 \nu - 1)}  \mathring{g}_k^2   \right \|_{L^1_{\omega,t,x}}^{\frac{1}{\varkappa}} \right)^{\frac{1}{2}} .
	\end{split}
\end{equation*}
From \eqref{estimate_t_small_second_step} and \eqref{Estimate_B_t_small_spatial_regularity}, we have the estimate of \eqref{Expression_stoch_integral_dec_LP_space} 
\begin{equation} \label{Estimate_A_B}
		\begin{split}
 \left\Vert \int_{v} \chi_{j}^{1,1} dv \right\Vert_{L^2_{\omega, t,x}} & 	\le \| B(0,t-\eta)  \|_{L^2_{\omega, t,x}} + \| B(t-\eta,t)  \|_{L^2_{\omega, t,x}} \\ &
\lesssim \left(\eta^{\frac{\epsilon}{2}} 2^{-2j}    (2^{- 2j} \delta)^{\frac{1 -m}{2} -(\frac{m+1}{2} - \gamma^{\star})} + \eta^{\frac{1}{2 \varpi}} ( 2^{-2j} \delta)^{\frac{1}{2} - \nu}  \right) 	\mathcal{K}^1.
			 \end{split}
\end{equation}
Equilibrating the terms on the right-hand side above
\begin{equation*}
\eta^{\frac{\epsilon}{2}} 2^{-2j}    (2^{- 2j} \delta)^{\frac{1 -m}{2} -(\frac{m+1}{2} - \gamma^{\star})}=  \eta^{\frac{1}{2 \varpi}} ( 2^{-2j} \delta)^{\frac{1}{2} - \nu},
\end{equation*}
we find the optimal value of $\eta$ to be
\begin{equation*}
\eta =    \left(2^{- 2j(\gamma^{\star} + \nu + \frac{1}{2} -m )} \delta^{\gamma^{\star} + \nu -\frac{1}{2} -m } \right)^{\frac{2 \varpi}{1- \epsilon \varpi}}.
\end{equation*}
Plugging this value of $\eta$ into \eqref{Estimate_A_B}, we have
\begin{equation} \label{final_stochastic_part_space}
	\begin{split}
		 \left\Vert \int_{v} \chi_{j}^{1,1} dv \right\Vert_{L^1_{\omega, t,x}} 	
 & \lesssim  \frac{2^{2j((m-\frac{1}{2} - \nu - \gamma^{\star})\frac{1}{1 - \epsilon \varpi} - \frac{1}{2}+ \nu) }}{ \delta^{(m+ \frac{1}{2} - \nu - \gamma^{\star})\frac{1}{1 - \epsilon \varpi}  - \frac{1}{2} + \nu}} \mathcal{K}^1.
	\end{split}
\end{equation}	
		
\paragraph{Step 3}  Let $j >0 $ be arbitrary and fixed. Recall that $\gamma \in (0,\min(1+\epsilon,m))$. Following the same arguments as in \cite [Proof of Lemma 4.2 Step 2]{gess2019optimal}, we obtain 
\begin{equation} \label{Estimate_cut_off_time_reg_small_v_space_new}
	\begin{split}
		&	\left\Vert  \int_{v} \chi_{j}^{1,2} dv  \right\Vert_{L^1_{\omega,t,x}} \\ &  = \left\Vert   \int_{v}  \mathcal{F}_{t,x}^{-1}     \psi_1 \left(\frac{2^{2j} \abs{v} }{\delta} \right) \frac{ \abs{v}^{\gamma-1} }{ \mathcal{L}(i\tau, \xi,v) }  \mathcal{F}_{t,x} \abs{v}^{1- \gamma} h_{j}   \ dv \right\Vert_{L^1_{\omega,t,x}}  \\ 
		&	\lesssim \frac{2^{2j(m-1- \gamma)}}{\delta^{m- \gamma} }  \| \abs{v}^{1- \gamma}  h  \|_{L^1_{\omega} \mathcal{M}_{\text{TV}}} .
	\end{split}
\end{equation}


\paragraph{Step 4} Let $ j > 0 $ be arbitrary and fixed. We have 
\begin{equation} \label{Expression_kinetic_time_small_v_space_new}
	\begin{split}
		\int_{v} \chi_{j}^{1,3} dv & = -  \int_{v}  \mathcal{F}_{t,x}^{-1}  \hat{\varphi}_j  \psi_1^{'} \left(\frac{2^{2j} \abs{v} }{\delta} \right) \frac{2^{2j} \abs{v} }{\delta}  \frac{ \sgn(v) \abs{v}^{ \gamma-1}}{ \mathcal{L}(i\tau, \xi,v)} \mathcal{F}_{t,x} \abs{v}^{- \gamma} \tilde{h}   \ dv 
		 \\  & + m (m -1)  \int_{v}  \mathcal{F}_{t,x}^{-1}  \hat{\varphi}_j \psi_1 \left(\frac{2^{2j} \abs{v} }{\delta} \right)  \frac{ 4 \pi^2 \abs{v}^{m-2 + \gamma} \sgn(v) \abs{\xi}^2}{ \mathcal{L}(i\tau, \xi,v)^2} \mathcal{F}_{t,x}    \abs{v}^{- \gamma} \tilde{h}  \ dv. 
			\end{split}
\end{equation}
Following the same arguments as in \cite [Proof of Lemma 4.2 Step 3]{gess2019optimal}, we obtain 
\begin{equation} \label{Kinetic_First_Averaging_small_v_space_new}
	\begin{split}
		 \left\Vert \int_{v} \chi_{j}^{1,3} dv \right\Vert_{L^1_{\omega,t,x}}  \lesssim \frac{2^{2j(m-1- \gamma)}}{\delta^{m- \gamma} }  \| \abs{v}^{- \gamma} \tilde{h}  \|_{L^1_{\omega} \mathcal{M}_{\text{TV}}} . 
	\end{split} 
\end{equation}

		\paragraph{Step 5} We denote
		\begin{equation*}
			\begin{split}
			\mathcal{K}^2 & = \| \abs{v}^{1- \gamma}  h  \|_{L^1_{\omega} \mathcal{M}_{TV}}  +  \| \abs{v}^{- \gamma } \tilde{h}   \|_{L^1_{\omega} \mathcal{M}_{\text{TV}}}. 
			\end{split}
		\end{equation*}
		We aim to conclude by real interpolation. For $z>0$, let
		\begin{equation*}
			K(z, \bar{\chi}_{j} ):= \inf  \left \{ \left \| \int_v \chi^1_{j} \ dv   \right \|_{ L^{1}_{\omega, t,x}}  + z \left \| \int_v \chi^0_{j} \ dv   \right \|_{ L^{\beta}_{\omega,t,x}}   : \ \bar{\chi}_{j} = \int_v \chi^0_{j} \ dv + \int_v \chi^1_{j} \ dv  \right \}.
		\end{equation*}
		By the estimates \eqref{velocity_degenerate},  \eqref{final_stochastic_part_space},  \eqref{Estimate_cut_off_time_reg_small_v_space_new} and \eqref{Kinetic_First_Averaging_small_v_space_new}, we have
		\begin{equation} \label{K_functional_bootstrap}
			\begin{split}
			 K(z, \bar{\chi}_{j} )  & \lesssim  \frac{2^{2j((m-\frac{1}{2} - \nu - \gamma^{\star})\frac{1}{1 - \epsilon \varpi} - \frac{1}{2}+ \nu) }}{ \delta^{(m+ \frac{1}{2} - \nu - \gamma^{\star})\frac{1}{1 - \epsilon \varpi}  - \frac{1}{2}+ \nu  }} \mathcal{K}^1 +       \frac{2^{2j(m-1- \gamma)}}{\delta^{m - \gamma} }  \mathcal{K}^2   + z \frac{\delta^{\rho }}{2^{2j\rho}}   \| \chi   \|_{ L^{\beta}_{\omega,t,x,v}}.
			\end{split}  
		\end{equation}
We denote $ \mathcal{K}=  \mathcal{K}^1 +  \mathcal{K}^2$ and recall that $ \gamma^{\star}=  \gamma(1- \epsilon \varpi) + \epsilon \varpi(m+ \frac{1}{2} - \nu)$. Since $j>0$, we have for \eqref{K_functional_bootstrap}
		\begin{equation} \label{estimate_K_functional_after_gamma_star}
		\begin{split}
			K(z, \bar{\chi}_{j} )  & \lesssim  \frac{2^{2j( m- \frac{1}{1- \epsilon \varpi} - \gamma)}}{ \delta^{m- \gamma} } \mathcal{K}^1 +       \frac{2^{2j(m-1- \gamma)}}{\delta^{m - \gamma} }  \mathcal{K}^2   + z \frac{\delta^{\rho }}{2^{2j\rho}}   \| \chi   \|_{ L^{\beta}_{\omega,t,x,v}}
			\\ &  \lesssim        \frac{2^{2j(m-1- \gamma)}}{\delta^{m - \gamma} }  \mathcal{K}   + z \frac{\delta^{\rho }}{2^{2j\rho}}   \| \chi   \|_{ L^{\beta}_{\omega,t,x,v}}.
		\end{split}  
	\end{equation}
	We equilibrate the estimates on the right-hand side above as follows 
		\begin{gather*} 
			\delta^{-a} c^{- a +1  } =  z \delta^{b} c^{b},
		\end{gather*}
		where $ a:= m - \gamma $, $b:= \rho $ and $ c:= 2^{-2j}$. This allows us to derive the value of $ \delta$ from the  expression above, namely $ \delta = z^{- \frac{1}{a+b}}  c^{ \frac{1-a-b}{a+b}} $ and plug this into the right-hand side of \eqref{estimate_K_functional_after_gamma_star} that becomes 
		\begin{equation} \label{z_small}
			z^{- \theta} K(z, \bar{\chi}_{j} ) \lesssim  2^{-j \tilde{\kappa}_x }  \left(  \mathcal{K} + \| \chi    \|_{ L^{\beta}_{\omega,t,x,v}} \right),  
		\end{equation} 
		where $ \theta := \frac{a}{a+b}= \frac{ m - \gamma  }{m - \gamma  + \rho} $ and
		\begin{equation*} 
			\begin{split}
			\tilde{\kappa}_x & := \frac{2b}{a+b}= \frac{2 \rho }{m - \gamma  + \rho}.
			\end{split}
		\end{equation*}
		By \cite[Theorem 5.2.1]{bergh2012interpolation}, we have
		\begin{equation*}
			\left(L^1_{\omega,t,x}, L^{\beta}_{\omega,t,x} \right)_{\theta, \infty}= L^{\tilde{p}, \infty}_{\omega,t,x}, 
		\end{equation*}
		with
		\begin{equation} \label{integrability_parameter}
			\tilde{p}= \frac{1}{1- \theta \rho}  = \frac{a+b}{a(1- \rho)+b}= \frac{m - \gamma  + \rho}{\rho +(1-\rho)(m - \gamma )}.
		\end{equation}
		Taking the supremum over $z>0$ on both sides of \eqref{z_small}, we have
		\begin{equation} \label{switch_inequality} 
			\| \bar{\chi}_{j}  \|_{L^{\tilde{p}, \infty}_{\omega,t,x}}    \lesssim  2^{-j \tilde{\kappa}_x} \left(  \mathcal{K} + \| \chi    \|_{ L^{\beta}_{\omega,t,x,v}} \right).  
		\end{equation}
		Note that the integrands in the above norms are supported on $[0,T]$ due to \eqref{Localization_in_time_degenerate_spatial_regularity} and \eqref{Localization_in_time_nondegenerate_spatial_regularity}. Since $\Omega \times [0,T] \times \mathbb{T}^d$ has finite measure, we may use $L^{\tilde{p}, \infty}_{\omega,t,x} \hookrightarrow L^{p}_{\omega,t,x}$ for $ p =  \tilde{p} - \epsilon$ (see \cite[Exercise 1.1.11]{grafakos2008classical}) in \eqref{switch_inequality}
		 	\begin{equation*} 
		 	\| \bar{\chi}_{j}  \|_{L^{p}_{\omega,t,x}}    \lesssim  2^{-j \tilde{\kappa}_x} \left(  \mathcal{K} + \| \chi    \|_{ L^{\beta}_{\omega,t,x,v}} \right).  
		 \end{equation*}
		Multiplying the above expression by $  2^{j \tilde{\kappa}_x} $ and taking the supremum over $j > 0$, we arrive at
		\begin{gather} \label{before_losing_eps}
			\sup_{j > 0}   2^{j \tilde{\kappa}_x}     \| \bar{\chi}_{j}  \|_{L^{p }_{\omega,t,x} }  \lesssim     \mathcal{K} + \| \chi   \|_{ L^{\beta}_{\omega,t,x,v}} .  
		\end{gather}
		Losing a small $ \epsilon $ in  $\tilde{\kappa}_x$, we have
		\begin{equation} \label{Losing_epsilon_space_regularity_interpolation_step}
			\begin{split}
		&	\left\Vert \sup_{j > 0} 2^{j( \tilde{\kappa}_x- \epsilon)}  \| \bar{\chi}_{j}  \|_{L^{p }_{t,x}} \right\Vert_{L^{p}_{\omega}} \le \left\Vert \sum_{j > 0}  2^{j( \tilde{\kappa}_x- \epsilon)}  \| \bar{\chi}_{j}  \|_{L^{p}_{t,x}} \right\Vert_{L^{p}_{\omega}}  \\ & \le \sum_{ j > 0}  2^{-j \epsilon }  2^{j \tilde{\kappa}_x}    \| \bar{\chi}_{j}  \|_{L^{p }_{\omega,t,x}}  \lesssim \sup_{j > 0}  2^{j \tilde{\kappa}_x}   \| \bar{\chi}_{j}  \|_{L^{p }_{\omega,t,x}},
			\end{split}
		\end{equation}
		where here $\lesssim$ denotes a bound that holds up to a multiplicative constant that only depends on $\epsilon$.
		
		\noindent For $j=0$, we can use Bernstein's lemma to get
		\begin{equation} \label{case_j0_space}
			 \|   \bar{\chi}_{j}   \|_{L^{p }_{\omega, t,x}} \lesssim  \|   \bar{\chi}_{j}   \|_{L^{1}_{\omega, t,x}} \lesssim \|   \bar{\chi}   \|_{L^{1}_{\omega, t,x}}. 
		\end{equation}
		 Let $ \kappa_x := \tilde{\kappa}_x- \epsilon$. From \eqref{before_losing_eps} and \eqref{case_j0_space}, we can arrive at \eqref{Bounds_Averaging_lemma_statement} as follows
		 
		\begin{equation} \label{final_removal_cutoff_space}
			\begin{split}
			\| \bar{\chi}   \|_{L^{p}_{\omega} \tilde{L}^{p}_t B_{p, \infty}^{\kappa_x}} & = \left\Vert \sup_{j \ge 0}  2^{j \kappa_x}  \| \bar{\chi}_{j}  \|_{L^{p }_{t,x}} \right\Vert_{L^{p}_{\omega}} \\ &  \lesssim     \left(  \sum_{k=1}^{\infty}     \|  \mathring{g}_k   \nabla  \tilde{u}^{\frac{m+1}{2}- \gamma^{\star} }   \|_{L^2_{\omega,t,x}}^2 \right)^{\frac{1}{2}} + \left(  \sum_{k=1}^{\infty}     \| \tilde{u}^{\frac{m+1}{2}- \gamma^{\star} }  \nabla \mathring{g}_k      \|_{L^2_{\omega,t,x}}^2 \right)^{\frac{1}{2}}  \\ &    +  \left( \sum_{k=1}^{\infty}  \esssup_{t \in [0,T]}    \left \|    \mathring{g}_k^2(t,\cdot,\tilde{u})  \right \|_{L^1_{\omega,x}}^{\frac{1}{\varpi}}   \left \|    \tilde{u}^{\varkappa(2 \nu - 1)}  \mathring{g}_k^2   \right \|_{L^1_{\omega,t,x}}^{\frac{1}{\varkappa}} \right)^{\frac{1}{2}} \\ &   +   \| \abs{v}^{1- \gamma}  h  \|_{L^1_{\omega} \mathcal{M}_{TV}} +  \| \abs{v}^{- \gamma } \tilde{h}  \|_{L^1_{\omega} \mathcal{M}_{\text{TV}}}     + \| \chi   \|_{ L^{\beta}_{\omega,t,x,v}} + \|   \bar{\chi}   \|_{L^{1}_{\omega, t,x}}.
			\end{split}  
		\end{equation} 
We remove the assumption that $ \chi$ is localized in $v$ similarly as done in \cite[Proof of Lemma 4.2]{gess2019optimal}. Let $ \Upsilon \in C^{\infty}_c ( \mathbb{R}_v)$ and  $\chi^{\Upsilon}(t,x,v) :=  \chi(t,x,v)  \Upsilon(v)$.
Then $ \chi^{\Upsilon}$ is a solution to
\begin{equation} \label{localization_v_space}
	\begin{split}
		\mathcal{L}(\partial_t, \nabla_x , v) \chi^{\Upsilon}(t,x,v)   & = \sum_{k=1}^{\infty}  \delta_{\tilde{u}(t,x)=v} \mathring{g}_k^{\Upsilon}(s,x,v)  \dot{\beta}_k \\ & + h^{\Upsilon}(t,x,v) - \tilde{h}^{\Upsilon^{'}}(t,x,v) + \partial_v \tilde{h}^{\Upsilon}(t,x,v),
	\end{split}
\end{equation}
where
\begin{equation*}
	\begin{split}
		 \mathring{g}_k^{\Upsilon}(t,x,v) & := \mathring{g}_k(t,x,v) \Upsilon(v), \quad \ \qquad \quad  h^{\Upsilon} (t,x,v)  := h(t,x,v) \Upsilon(v), \\  \tilde{h}^{\Upsilon^{'}}(t,x,v) & := \tilde{h}(t,x,v)  \Upsilon^{'}(v),  \qquad \qquad \  \tilde{h}^{\Upsilon}(t,x,v) := \tilde{h}(t,x,v) \Upsilon(v).
	\end{split}
\end{equation*}
Then the estimate \eqref{final_removal_cutoff_space} reads as follows:
\begin{equation} \label{Smooth_cutoff_time_reg_small_v_space}
	\begin{split}
		\| \bar{\chi}^{\Upsilon}   \|_{L^{p}_{\omega} \tilde{L}^{p}_t B_{p, \infty}^{\kappa_x}}  & \lesssim     \left(  \sum_{k=1}^{\infty}     \|  \mathring{g}_k^{\Upsilon} \nabla  \tilde{u}^{\frac{m+1}{2}- \gamma^{\star} }   \|_{L^2_{\omega,t,x}}^2 \right)^{\frac{1}{2}}   + \left(  \sum_{k=1}^{\infty}     \| \tilde{u}^{\frac{m+1}{2}- \gamma^{\star} }  \nabla \mathring{g}_k^{\Upsilon}	    \|_{L^2_{\omega,t,x}}^2 \right)^{\frac{1}{2}}    
\\		&    + \left( \sum_{k=1}^{\infty}    \esssup_{t \in [0,T]}   \left \|   (\mathring{g}^{\Upsilon}_k)^2(t, \cdot,\tilde{u})  \right \|_{L^1_{\omega,x}}^{\frac{1}{\varpi}}       \left \|   \tilde{u}^{ \varkappa(2 \nu - 1)}  (\mathring{g}^{\Upsilon}_k)^2   \right \|_{L^1_{\omega,t,x}}^{\frac{1}{\varkappa}}  \right)^{\frac{1}{2}}  \\ & + \| \abs{v}^{1- \gamma} ( h^{\Upsilon} - \tilde{h}^{\Upsilon^{'}}) \|_{L^1_{\omega} \mathcal{M}_{\text{TV}}}    + \| \abs{v}^{- \gamma} \tilde{h}^{\Upsilon} \|_{L^1_{\omega} \mathcal{M}_{\text{TV}}}  + \|  \chi^{\Upsilon}  \|_{ L^{\beta}_{\omega,t,x,v}} \\ & +  \|   \bar{\chi}^{\Upsilon}   \|_{L^{1}_{\omega, t,x}}.    
	\end{split}
\end{equation}
Since $ \abs{v}^{- \gamma} \tilde{h} \in L_{\omega}^1 \mathcal{M}_{\text{TV}}$ by assumption, there exists to $ \varepsilon_n \downarrow 0$ a sequence $ i_n \uparrow \infty $ such that
\begin{equation*}
	\mathbb{E} \left( \int_{t,x,v} \mathbbm{1}_{i_n \le \abs{v}}   \abs{v}^{- \gamma} \tilde{h}  \ dv \ dx \ dt \right)  \le \varepsilon_n, \quad \forall n \in \mathbb{N}.
\end{equation*}
For $ n \in \mathbb{Z}_{+}$ and a smooth cut-off function $ \Upsilon \in C_c^{\infty} (\mathbb{R})$ with $ \Upsilon=1$ on the ball $B_1(0)$ and $ \text{supp} \ \Upsilon \in B_2(0)$, we define $ \Upsilon_n$ via $ \Upsilon_n(v) := \Upsilon \left( \frac{v}{n} \right)$. Thus, $ \Upsilon^{'}_n$ is supported on $ i_n \le \abs{v} \le 2 i_n$ and takes values in $ \left[0, \frac{1}{i_n} \right]$, so that 
\begin{equation*}
	\begin{split}
		\| \abs{v}^{1- \gamma}  \tilde{h}^{\Upsilon^{'}_n} \|_{L^1_{\omega} \mathcal{M}_{\text{TV}}} & =  \mathbb{E} \left( \int_{t,x,v}  | \Upsilon^{'}_n(v) | |v | ( \abs{v}^{- \gamma} \tilde{h} ) \ dv \ dx \ dt \right)  \\  & =  \mathbb{E} \left( \int_{t,x,v} \mathbbm{1}_{ i_n \le \abs{v} \le 2 i_n}  | \Upsilon^{'}_n(v)| |v| ( \abs{v}^{- \gamma} \tilde{h} ) \ dv \ dx \ dt \right)  \\ & \lesssim \mathbb{E} \left( \int_{t,x,v} \mathbbm{1}_{ i_n \le \abs{v} \le 2 i_n}   \abs{v}^{- \gamma} \tilde{h}   \ dv \ dx \ dt \right)  \le \varepsilon_n.
	\end{split}
\end{equation*}
With the same choices of $ \Upsilon$, $\Upsilon_n$ and  $\Upsilon_n^{'}$, we notice 
\begin{equation*}
	\begin{split}
	\left(  \sum_{k=1}^{\infty}     \|  \mathring{g}_k^{\Upsilon_n} \nabla  \tilde{u}^{\frac{m+1}{2}- \gamma^{\star} }   \|_{L^2_{\omega,t,x}}^2 \right)^{\frac{1}{2}}    & \le  \left(  \sum_{k=1}^{\infty}  \mathbb{E} \int_{t,x} \mathbbm{1}_{i_n \le \abs{\tilde{u}} } |  \mathring{g}_k  \Upsilon_n(\tilde{u}) \nabla \tilde{u}^{\frac{m+1}{2}- \gamma^{\star}} |^2    dx \  dt   \right)^{\frac{1}{2}} \\ & \lesssim \left(  \sum_{k=1}^{\infty}  \mathbb{E} \int_{t,x} \mathbbm{1}_{i_n \le \abs{\tilde{u}} } |  \mathring{g}_k   \nabla \tilde{u}^{\frac{m+1}{2}- \gamma^{\star}} |^2    dx \  dt   \right)^{\frac{1}{2}},
\end{split}
\end{equation*}	
\begin{equation*}
	\begin{split}
 \left(  \sum_{k=1}^{\infty}     \|  \tilde{u}^{\frac{m+1}{2}- \gamma^{\star} } \nabla \mathring{g}_k^{\Upsilon_n}      \|_{L^2_{\omega,t,x}}^2 \right)^{\frac{1}{2}}    & \le  \left( \sum_{k=1}^{\infty}   \mathbb{E} \int_{t,x} \mathbbm{1}_{i_n \le \abs{\tilde{u}} } | \tilde{u}^{\frac{m+1}{2}- \gamma^{\star}}  \nabla \mathring{g}_k  \Upsilon_n(\tilde{u})  |^2    dx \  dt   \right)^{\frac{1}{2}} \\ & + \left(  \sum_{k=1}^{\infty}  \mathbb{E} \int_{t,x} \mathbbm{1}_{i_n \le \abs{
\tilde{u}} \le 2 i_n }    \abs{ \frac{  \tilde{u}^{\frac{m+1}{2}- \gamma^{\star}}}{n} \mathring{g}_k   \Upsilon^{'}_n(\tilde{u}) \nabla  \tilde{u}}^2  dx \  dt   \right)^{\frac{1}{2}}	\\ & \lesssim   \left( \sum_{k=1}^{\infty}  \mathbb{E} \int_{t,x} \mathbbm{1}_{i_n \le \abs{\tilde{u}} } | \tilde{u}^{\frac{m+1}{2}- \gamma^{\star}}  \nabla \mathring{g}_k    |^2    dx \  dt   \right)^{\frac{1}{2}}	\\ & +  \left(  \sum_{k=1}^{\infty}  \mathbb{E} \int_{t,x} \mathbbm{1}_{i_n \le \abs{\tilde{u}} \le 2 i_n }    | \mathring{g}_k     \nabla \tilde{u}^{\frac{m+1}{2}- \gamma^{\star}}   |^2   dx \  dt   \right)^{\frac{1}{2}},
\end{split}
\end{equation*}
and 
		\begin{equation*}
	\begin{split}
		& \Bigg[ \sum_{k=1}^{\infty}  \esssup_{t \in [0,T]}    \left( \mathbb{E} \int_x  \abs{\mathring{g}^{\Upsilon_n}_k(t,x,\tilde{u}(t,x))}^2   dx \right)^{\frac{1}{\varpi}}  \\ & \times \left(   \mathbb{E} \int_{t,x}  |\tilde{u}(t,x)|^{ \varkappa(2 \nu - 1)} | \mathring{g}^{\Upsilon_n}_k(t,x,\tilde{u}(t,x))     |^2  dx \ dt \right)^{\frac{1}{\varkappa}} \Bigg]^{\frac{1}{2}}  
			\end{split}
	\end{equation*}
		\begin{equation*}
	\begin{split}
		 & \lesssim \Bigg[ \sum_{k=1}^{\infty}   \esssup_{t \in [0,T]} \left(  \mathbb{E}  \int_x   \mathbbm{1}_{  i_n \le  \abs{\tilde{u}} }  \abs{\mathring{g}_k(t,x,\tilde{u}(t,x))}^2  dx \right)^{\frac{1}{\varpi}}   \\ & \times \left( \mathbb{E} \int_{t,x}  \mathbbm{1}_{  i_n \le  \abs{\tilde{u}} }  |\tilde{u}(t,x)|^{ \varkappa(2 \nu - 1)}  | \mathring{g}_k(t,x,\tilde{u}(t,x))     |^2   dx \  dt  \right)^{\frac{1}{\varkappa}} \Bigg]^{\frac{1}{2}}.
	\end{split}
\end{equation*}
With these choices of  $\Upsilon$, $\Upsilon_n$ and  $\Upsilon_n^{'}$ in \eqref{Smooth_cutoff_time_reg_small_v_space}, we may take the limit $ n \rightarrow \infty$ and use Fatou's lemma to obtain \eqref{final_removal_cutoff_space} also for general $ \chi$. 
	\end{proof}
 The following averaging lemma is used to prove Theorem \ref{Main_theorem_time}.  
	\begin{lem} \label{Time_Averaging_Lemma}
		Let $m$, $ \beta$, $\rho$,  $ \epsilon$, $\chi$, $\bar{\chi} $ be as in Lemma \ref{Isotropic_Averaging_Lemma} and assume $\zeta \in (0, \frac{1}{2})$. Let $ \varpi >1 $ and $ \varkappa^{-1} =1 - \varpi^{-1}$ such that $\varpi^{-1}> 2 \zeta$ . Assume that $\mathring{g}^2_k(\tilde{u})  \in L^{\infty}_t L^{1}_{\omega,x}$. 
		\begin{enumerate} [label=(\roman*)]
\item Let $ \gamma \in (0,1)$ such that
\begin{equation} \label{p_bar_q_statement_time_averaging_lemma}
	\begin{split}
	 \bar{p}  :=  \frac{1 - \gamma + \rho}{\rho + (1-\rho) (1 - \gamma  ) } < q  := \frac{2(1- \gamma + \rho)}{1- \gamma + \rho+ (1-\gamma - \epsilon(1-\gamma+\rho))(1-2\rho)},
\end{split}
\end{equation}
and let 
 \begin{equation} \label{regularity_results_before_bootstrap_time_integrability}
	p  := \frac{1 - \gamma + \rho}{1 - \gamma + \rho- ( 1 - \gamma - 2 \epsilon(1-\gamma + \rho)) \rho}.
\end{equation}
Suppose that  $ \tilde{u}^{1- \gamma} \mathring{g}_k(\tilde{u}) \in L^{2}_{\omega,t,x}, \tilde{u}^{2 \varkappa(1- \gamma)}  \mathring{g}_k^2(\tilde{u})  \in L^1_{\omega,t,x}$ and  $h, \tilde{h}$ satisfy the condition \eqref{condition_singular_moments_statement} for this value of $ \gamma$.
If $\bar{\chi}  \in L^{ \bar{p}}_{\omega} L^{q}_t L^p_x$, then $ \bar{\chi}   \in   L^{\bar{p}}_{\omega} B^{\kappa_t}_{q, \infty}  L^{p}_x   $ where
\begin{equation} \label{regularity_results_before_bootstrap_time}
\begin{split}
\kappa_t  & := \frac{ \zeta \rho }{1 - \gamma + \rho} - \epsilon,
\end{split}
\end{equation}
with estimate 
			\begin{equation} \label{Bounds_Averaging_lemma_statement_time}
				\begin{split}
\| \bar{\chi}  \|_{ L^{\bar{p}}_{\omega} B^{\kappa_t}_{q, \infty}  L^{p}_x   } & \lesssim     \| \chi  \|_{ L^{\beta}_{\omega,t,x,v}} +  \left( \sum_{k=1}^{\infty} \|      \tilde{u}^{1- \gamma}   \mathring{g}_k   \|_{L^2_{\omega,t,x}}^2 \right)^{\frac{1}{2}}   \\ & + \left( \sum_{k=1}^{\infty}  \esssup_{t \in [0,T]}    \left \|    \mathring{g}_k^2(t,\cdot,\tilde{u})  \right \|_{L^1_{\omega,x}}^{\frac{1}{\varpi}}   \left \|    \tilde{u}^{2 \varkappa(1 -\gamma)}  \mathring{g}_k^2   \right \|_{L^1_{\omega,t,x}}^{\frac{1}{\varkappa}} \right)^{\frac{1}{2}} 	
\\ & + \| \abs{v}^{1- \gamma}  h \|_{L^1_{\omega} \mathcal{M}_{\text{TV}}}   + \| \abs{v}^{- \gamma} \tilde{h}  \|_{L^1_{\omega} \mathcal{M}_{\text{TV}}}   +  \| \bar{\chi} \|_{L^{ \bar{p}}_{\omega} L^{q}_t L^p_x}. 
				\end{split}
			\end{equation}
					   Here and in the proof  $  \lesssim $ denotes a bound that holds up to a multiplicative constant that only depends on $ m, \rho, \epsilon, \zeta, \varpi, \gamma $ and $T$.
	\item Let  $ \nu \in ( \frac{1}{2}, \infty)$ and let $\bar{\epsilon} \in (0,1)$ such that $\tilde{\gamma} \in (1, 1+ \bar{\epsilon}(m-1))$ and $\zeta \in (0, \frac{1}{2} - \bar{\epsilon})$.  Suppose that  $ \tilde{u}^{\nu - \frac{1}{2}} \mathring{g}_k(\tilde{u}) \in L^{2}_{\omega, t,x}$, $ \tilde{u}^{\varkappa(2 \nu- 1)} \mathring{g}_k^2(\tilde{u})  \in L^1_{\omega,t,x}$ and $h, \tilde{h}$ satisfy the condition
	\begin{equation*} 
		\abs{h}(\omega,t,x,v) + | \tilde{h} | (\omega,t,x,v) \abs{v}^{- \tilde{\gamma}} \in L^1_{\omega} \Ma_{\text{TV}}( \mathbb{R}_t \times \mathbb{T}_x^d \times \mathbb{R}_v).
	\end{equation*}
 If $ \chi$ is supported in $v \in \mathbb{R} \setminus (-1,1) $ and  $\bar{\chi}  \in  L^{1}_{\omega} L^2_t L^{1}_x $, then $  \bar{\chi}   \in L^1_{\omega} B^{\zeta - \epsilon}_{2, \infty} L^{1}_x   $ with estimate
		\begin{equation} \label{final_cut_off_time_large_statement}
	\begin{split}
		\left \|  \bar{\chi}  \right \|_{L^1_{\omega} B^{\zeta - \epsilon}_{2, \infty} L^{1}_x  }  & \lesssim       \left( \sum_{k=1}^{\infty} \|    \tilde{u}^{ \nu -\frac{1}{2}}   \mathring{g}_k    \|_{L^2_{\omega,t,x}}^2 \right)^{\frac{1}{2}}    +     \Bigg( \sum_{k=1}^{\infty}   \esssup_{t \in [0,T]}    \left \|    \mathring{g}_k^2(t,\cdot,\tilde{u})  \right \|_{L^1_{\omega,x}}^{\frac{1}{\varpi}}    \\ & \times   \left \|   \tilde{u}^{\varkappa(2 \nu - 1)} \mathring{g}_k^2    \right \|_{L^1_{\omega,t,x}}^{\frac{1}{\varkappa}} \Bigg)^{\frac{1}{2}}    \ + \|    h  \|_{L^1_{\omega} \mathcal{M}_{\text{TV}}}  + \| \abs{v}^{-  \tilde{\gamma}} \tilde{h}  \|_{L^1_{\omega} \mathcal{M}_{\text{TV}}}   \\ & +  \| \bar{\chi} \|_{L^{ 1}_{\omega} L^{2}_t L^1_x}. 
	\end{split}  
\end{equation}
	   Here and in the proof  $ \ \lesssim $ denotes a bound that holds up to a multiplicative constant that only depends on $ m, \rho, \epsilon, \zeta, \varpi, \nu, \tilde{\gamma}, \bar{\epsilon}  $ and $T$.
	\end{enumerate}  
	\end{lem}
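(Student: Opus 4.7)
The plan is to mirror the structure of the proof of Lemma \ref{Isotropic_Averaging_Lemma}, but performing a Littlewood--Paley decomposition in the time variable rather than the space variable. Concretely, I would set $\chi_{l} := \mathcal{F}_t^{-1}[\hat{\eta}_l \mathcal{F}_t \chi]$, so that $\mathcal{F}_t \chi_l(\tau,x,v)$ is supported on $|\tau| \sim 2^l$ for $l>0$, and reduce the problem to a gain of $2^{l \kappa_t}$ in $L^{\bar p}_\omega L^q_t L^p_x$. Applying the equation \eqref{notion_solution_kinetic_torus} and looking at the symbol $\mathcal{L}(i\tau,\xi,v) = (2\pi)(i\tau + m|v|^{m-1}(2\pi)|\xi|^2)$, the degenerate region for time regularity is where $m|v|^{m-1}|\xi|^2 \gg 2^l$, i.e.\ large $|v|$. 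I would then microlocalize each block with cut-offs $\psi_0(|v|/(\delta 2^{-l/(m-1)}))$ and $\psi_1$ in velocity, at a scale $\delta$ to be optimized later by the same real-interpolation equilibration as in the previous proof.

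For part (i), this gives four pieces. The degenerate (large $|v|$) piece $\int_v \chi_l^0 \ dv$ is controlled via Minkowski and H\"older by $\delta^{\rho'} 2^{-l\rho''} \|\chi\|_{L^\beta_{\omega,t,x,v}}$, with powers determined by H\"older conjugates of $\beta$. The stochastic piece $\int_v \chi_l^{1,1} \ dv$ is handled by the same mixed mild--Fourier representation as in Step 2 of the proof of Lemma \ref{Isotropic_Averaging_Lemma}: rewrite the stochastic integral as a convolution against the periodic heat kernel $\Phi(a(v)(t-s),\cdot)$ with $a(v) = m|v|^{m-1}$, split the time integral into $[0,t-\eta]$ and $[t-\eta,t]$, apply It\^o's isometry and the distributional identity $\delta_{\tilde u = v}\mathring g_k = \chi \partial_v \mathring g_k - \partial_v(\chi \mathring g_k)$ together with the a priori control on $\tilde u^{1-\gamma} \mathring g_k(\tilde u) \in L^2_{\omega,t,x}$ to exchange one $v$-derivative; and then use the kernel bounds $\|\nabla_x \tilde\Phi(t,\cdot)\|_{L^1_x} \lesssim t^{-1/2}$ on the first piece (yielding a $|\log \eta|$) and H\"older with exponents $(\varpi,\varkappa)$ on the short piece. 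The condition $\varpi^{-1} > 2\zeta$ is exactly what allows the short time piece to produce a factor $\eta^{1/(2\varpi)}$ with room to absorb $\eta^\zeta$ after optimizing in $\eta$. The $h$ and $\tilde h$ pieces $\int_v \chi_l^{1,2} \ dv$, $\int_v \chi_l^{1,3} \ dv$ follow the same Fourier multiplier estimates as Steps 3 and 4, producing $2^{l \cdot 0}/\delta^{1-\gamma} \cdot \|\,|v|^{1-\gamma} h\|_{L^1_\omega \mathcal{M}_{TV}}$ type bounds (no gain in $l$, only in $\delta$). Finally I would invoke real interpolation with the $K$-functional between $L^1_{\omega,t,x}$ and $L^{\bar p}_\omega L^q_t L^p_x$, and equilibrate in $\delta$ analogously to \eqref{z_small} to read off $\kappa_t$ as in \eqref{regularity_results_before_bootstrap_time}; the specific choice \eqref{p_bar_q_statement_time_averaging_lemma} of $q > \bar p$ is what permits the interpolation step to land in $B^{\kappa_t}_{q,\infty} L^p_x$. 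The large-$l$ logarithmic loss from the heat kernel is absorbed by sacrificing the $-\epsilon$ in $\kappa_t$ via the same summation trick as in \eqref{Losing_epsilon_space_regularity_interpolation_step}.

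For part (ii), the hypothesis that $\chi$ is supported in $|v| \geq 1$ removes the degeneracy of the operator $\mathcal{L}$, so no microlocal $\psi_0/\psi_1$ decomposition is needed. I would simply block-decompose in time and divide by the symbol: on the support of $\chi$, $a(v) = m|v|^{m-1} \geq m$ uniformly, so the heat-kernel arguments of Step 2 of the previous lemma go through directly with $\delta \sim 1$, using the a priori bound $\tilde u^{\nu-1/2} \mathring g_k(\tilde u) \in L^2_{\omega,t,x}$ in place of the $\gamma^\star$-based bound, and again splitting the stochastic convolution into $[0,t-\eta] \cup [t-\eta,t]$. The extra parameter $\bar{\epsilon}$ gives room to absorb the $|v|^{-\tilde\gamma}$ singular moment of $\tilde h$, and the final equilibration in $\eta$ yields the gain $2^{l(\zeta-\epsilon)}$ directly; no real interpolation in the spatial variable is needed because we work in $L^1_x$ throughout.

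The main obstacle will be the stochastic term in part (i): here the heat kernel carries the frozen diffusivity $a(\tilde u)$ rather than a constant, so the splitting in $\eta$, the use of It\^o's isometry, and the choice of exponents $(\varpi,\varkappa)$ must be synchronized with the real-interpolation bookkeeping so that the final exponent $\kappa_t$ in \eqref{regularity_results_before_bootstrap_time} comes out with a clean $\zeta$-dependence. In particular, the compatibility condition $\varpi^{-1} > 2\zeta$ enters precisely to ensure that the short-time contribution $\eta^{1/(2\varpi)}$ beats $\eta^{-2\zeta}$ after we equilibrate, and verifying that the optimization matches the equilibration \eqref{z_small} for the measure terms at the same $\delta$ requires careful algebra. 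Everything else (the degenerate $L^\beta$ piece, the $h,\tilde h$ pieces, the $j=0/l=0$ low-frequency block treated by Bernstein, and the removal of the $v$-localization via Fatou with a cut-off $\Upsilon_n$) can be taken over verbatim from the proof of Lemma \ref{Isotropic_Averaging_Lemma}.
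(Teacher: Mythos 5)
Your overall scaffolding — block in time rather than space, microlocalize in $v$, treat the degenerate piece via H\"older, treat the stochastic/measure pieces via the equation, then equilibrate via real interpolation — is aligned with the paper. But the central mechanism you propose for the stochastic term is not the one the paper uses, and I do not see how yours can produce the needed $2^{-l\zeta}$ decay. You plan to carry over the Step 2 argument of Lemma~\ref{Isotropic_Averaging_Lemma} essentially unchanged: split the time integral in the stochastic convolution into $[0,t-\eta]$ and $[t-\eta,t]$ with a free parameter $\eta$, get $|\log\eta|$ from the long piece (via $\|\nabla_x\tilde\Phi(t,\cdot)\|_{L^1_x}\lesssim t^{-1/2}$) and $\eta^{1/(2\varpi)}$ from the short piece, then optimize in $\eta$. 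In Lemma~\ref{Isotropic_Averaging_Lemma} that optimization works because a $2^{-2j}$ factor has already been extracted by Bernstein (the $\Delta_x$ pulled out of $\varphi_j$), so the optimal $\eta$ can be chosen as a function of $2^{-2j}\delta$ and a clean power emerges. In Lemma~\ref{Time_Averaging_Lemma} there is no spatial Littlewood--Paley block to hand you a factor of $2^{-l}$. If you run the bare $\eta$-split, the $[0,t-\eta]$ contribution carries \emph{no} dependence on $l$ at all: it gives $|\log\eta|$, and no choice of $\eta$ can turn that into $2^{-l\zeta}$. The short-time piece alone cannot save you because the long-time piece already dominates the estimate of $\|\bar\chi_l\|$.

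What the paper actually does is completely different in this step, and it is the one ingredient your proposal is missing. Since $\hat\eta_l(0)=0$ for $l>0$, the convolution kernel $\eta_l$ has mean zero, so the Littlewood--Paley block of the stochastic part can be written as an \emph{increment}
\[
\chi^{1,1}_l(t,x,v) = \int_r \eta_l(r)\bigl(\chi^{1,1}(t-r,x,v) - \chi^{1,1}(t,x,v)\bigr)\,dr.
\]
The split is then into $[0,t-r]$ and $[t-r,t]$ with $r$ being the convolution variable (not a free parameter), and the long piece involves the \emph{incremental} heat kernels $\Phi_{\mathrm{inc}}(a(v)s,\cdot) = \Phi(a(v)(t-r-s),\cdot)-\Phi(a(v)(t-s),\cdot)$. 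The crucial estimate is $\|\tilde\Phi_{\mathrm{inc}}(a(v)s,\cdot)\|_{L^1_x}^2 \lesssim r^{2\zeta}(t-r-s)^{-2\zeta}$, obtained by interpolating $\|\tilde\Phi_{\mathrm{inc}}\|_{L^1_x}\lesssim 1$ against $\|\tilde\Phi_{\mathrm{inc}}\|_{L^1_x}\lesssim r(t-r-s)^{-1}$. The factor $r^\zeta$ then gets integrated against $|\eta_l(r)|$, which concentrates at scale $2^{-l}$, yielding $2^{-l\zeta}$. The short piece $[t-r,t]$ similarly produces $r^{1/(2\varpi)}$, which dominates $r^\zeta$ thanks precisely to $\varpi^{-1}>2\zeta$. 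Your proposal has no analogue of the mean-zero cancellation or the incremental kernel bound, so the mechanism for the gain in $l$ is absent.

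Two smaller points. First, you identify the degenerate region in $v$ as large $|v|$, but then use a cutoff $\psi_0(|v|/(\delta 2^{-l/(m-1)}))$ that picks out small $|v|$; the paper's degenerate region is indeed small $|v|$ (where $a(v)=m|v|^{m-1}$ degenerates and the $L^\beta$ H\"older estimate applies), so the sentence is backwards. Second, the paper's cutoff scale is $\psi_0(|v|/\delta)$ with $\delta$ \emph{independent} of $l$; the $l$-dependent scale $\delta 2^{-l/(m-1)}$ you propose is an unnecessary coupling inherited from Lemma~\ref{Isotropic_Averaging_Lemma} (where the $j$-dependence comes from $|\xi|^2\sim 2^{2j}$ appearing explicitly in the symbol), and would break the clean real-interpolation equilibration that gives $\kappa_t=\zeta\rho/(1-\gamma+\rho)-\epsilon$. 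The rest — the $L^\beta$ H\"older bound, the Fourier-multiplier bounds on the $h$ and $\tilde h$ terms (which you correctly note carry no $2^{-l}$ gain beyond the $2^{-l/2}$ from Bernstein in time), the removal of the $v$-cutoff via Fatou, and the observation that Part (ii) needs no $\delta$-cutoff because $a(v)\gtrsim 1$ — is consistent with the paper, but is contingent on fixing the stochastic-term argument above.
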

	
	\begin{oss}
		Note that if $ \rho,  \gamma$ are chosen close to one, $\zeta$ close to one half and $  \epsilon$ small enough, the orders of the differentiability and integrability exponents in Lemma  \ref{Time_Averaging_Lemma} correspond  to $\sigma_t$ close to $\frac{1}{2}$, the time integrability equal to $2$, the stochastic and space integrability equal to $1$ as in Theorem \ref{Main_theorem_time}.
	\end{oss}
	\begin{proof}
We introduce two main modifications from the proof of Lemma \ref{Isotropic_Averaging_Lemma}: the localization of $\chi$ in Fourier space connected to the time variable $t$ and a different microlocal decomposion of $ \chi$ based on the size of $v$ only. 

Let $\mathcal{L}(\partial_t, \nabla_x, v )$ and $\mathcal{L}(i \tau,  \xi, v )$ be as in the proof of Lemma \ref{Isotropic_Averaging_Lemma}.  Recall that
\begin{equation} \label{definition_av_time_regularity}
	a(v)= m \abs{v}^{m-1},
\end{equation} 
and the periodic heat kernel
\begin{equation*}
	\Phi(t,x) = \begin{cases}\frac{1}{(4 \pi t)^{\frac{d}{2}}} \sum_{n \in \mathbb{Z}^d}  \exp \left(- \frac{\abs{x+n}^2}{4t}\right) & t >0,
		\\ 0  & t < 0. 
	\end{cases}
\end{equation*}
 We prove  Parts \textit{(i)} and \textit{(ii)} of the statement separately.

\paragraph{Part \textit{(i)}} 
As in the proof of Lemma \ref{Isotropic_Averaging_Lemma}, we first assume that $ \chi$ is compactly supported with respect to $v$ and then remove this qualitative assumption at the end of the proof.
 
 Let $\psi_0$ be a smooth function with compact support in the ball $B_2(0)$ such that $\psi_0=1$ in  $B_1(0)$ and $\psi_1:=1- \psi_0$. For $ \delta >0$ to be specified later, we write
\begin{equation*}
	\chi=  \psi_0 \left( \frac{ \abs{v} }{\delta} \right)  \chi + \psi_1 \left(\frac{ \abs{v}}{\delta} \right)  \chi =: \chi^{0}+  \chi^{1}.
\end{equation*}
Then $\chi^1$ solves, in the sense of \eqref{notion_solution_kinetic_torus},
\begin{equation*}
	\begin{split}
		\mathcal{L}(\partial_t, \nabla_x,v) \chi^{1} & =  \psi_1 \left(\frac{ \abs{v} }{\delta} \right)  \left(\sum_{k=1}^{\infty}  \delta_{\tilde{u}=v }   \mathring{g}_k \dot{\beta}_k + h +  \partial_v \tilde{h} \right).
	\end{split}
\end{equation*}
Thus,
\begin{equation} \label{non_degenerate_decomposition_starting}
	\begin{split}
		\chi^{1}(t,x,v) & =  \mathcal{F}_{t,x}^{-1} \psi_1 \left(\frac{ \abs{v} }{\delta} \right) \frac{1}{ \mathcal{L}(i\tau, \xi,v)} \mathcal{F}_{t,x}  \left(\sum_{k=1}^{\infty}  \delta_{\tilde{u}=v }   \mathring{g}_k \dot{\beta}_k \right)
		\\ & + \mathcal{F}_{t,x}^{-1} \psi_1 \left(\frac{ \abs{v} }{\delta} \right) \frac{1}{ \mathcal{L}(i\tau, \xi,v)} \mathcal{F}_{t,x}  h(t,x,v)  \\   & + \mathcal{F}_{t,x}^{-1} \psi_1 \left(\frac{ \abs{v} }{\delta} \right) \frac{1}{ \mathcal{L}(i\tau, \xi,v)} \mathcal{F}_{t,x}  \partial_v \tilde{h}(t,x,v) \\  & =: \chi^{1,1}(t,x,v)  + \chi^{1,2}(t,x,v)  + \chi^{1,3}(t,x,v).
	\end{split}
\end{equation}
Let $ \{ \hat{\eta}_l \}_{l \ge 0}$ be defined as in Section \ref{Notation}. 
For $l \ge 0$, we define the Littlewood--Paley block of $\chi^0$ as follows
\begin{equation*}
	\chi_{l}^0(t,x,v) = \mathcal{F}_{t}^{-1} [ \hat{\eta}_{l} \mathcal{F}_{t} \chi^0(t,x,v) ], 
\end{equation*}
where $ \mathcal{F}_{t} \chi_{l}^0 (\tau,x,v)$ is supported on $ \abs{\tau} \sim 2^l$ for $l > 0$ and on $ \abs{\tau} \lesssim 1$ for $l=0$. In the calculations below, we assume that  $l>0$ unless stated otherwise.

The block for $ \chi^{1} $ is defined as follows
\begin{equation} \label{Littlewood_Paley_block_nondegenerate_stochastic_cutoff_t}
	\begin{split}
\chi^1_l(t,x,v)   = \int_r \eta_{l}(r) \chi^{1}(t-r,x,v) \ dr .  
	\end{split}
\end{equation}
The blocks for $ \chi^{1,1}, \chi^{1,2}$ and $\chi^{1,3}$ are defined using \eqref{non_degenerate_decomposition_starting} as follows
\begin{equation*} 
	 \begin{split}
	\chi^{1,1}_l (t,x,v)& = \int_r \eta_{l}(r) \chi^{1,1}(t-r,x,v)  \ dr,
\\  	\chi^{1,2}_l(t,x,v) & =  \mathcal{F}_{t,x}^{-1} \hat{\eta}_{l}   \psi_1 \left(\frac{ \abs{v} }{\delta} \right) \frac{ 1 }{ \mathcal{L}(i\tau, \xi,v) }  \mathcal{F}_{t,x}  h(t,x,v),   \\   \chi^{1,3}_l(t,x,v)  & = \mathcal{F}_{t,x}^{-1}  \hat{\eta}_{l}  \psi_1 \left(\frac{ \abs{v} }{\delta} \right) \frac{ 1 }{ \mathcal{L}(i\tau, \xi,v) }  \mathcal{F}_{t,x} \partial_v  \tilde{h}(t,x,v),
 \end{split}
\end{equation*}
respectively.

We can decompose \eqref{Littlewood_Paley_block_nondegenerate_stochastic_cutoff_t} as follows
\begin{equation*}
	\begin{split}
\chi^1_l(t,x,v) 
 & = \int_r \eta_{l}(r)  (\chi^{1,1}(t-r,x,v) - \chi^{1,1}(t,x,v))  \ dr \\ & +  \chi^{1,2}_l(t,x,v)  + \chi^{1,3}_l(t,x,v). 
\end{split}
\end{equation*}

We have obtained the following decomposition
\begin{equation} \label{decomposition_time_v_small}
	\begin{split}
		\bar{\chi}_l(t,x)  & = \int_v \chi_l (t,x,v) \ dv
	\\ & 	= \int_v \chi^0_l (t,x,v) \ dv  + 	\int_v \chi^1_l(t,x,v) \ dv   \\ & =  \int_{v} \chi^0_l(t,x,v)  \ dv +  \int_{v,r} \eta_{l}(r) (\chi^{1,1}(t-r,x,v)  - \chi^{1,1}(t,x,v))   \ dr \ dv \\ & +  \int_{v} \chi^{1,2}_l(t,x,v)  \ dv  +  \int_{v} \chi^{1,3}_l(t,x,v)  \ dv.
	\end{split}
\end{equation}

We proceed in dividing this part of the proof into five steps. In the first four we estimate the velocity averages \eqref{decomposition_time_v_small} and in the final step we combine all the estimates.  The main novelty relative to \cite[Proof of Lemma 4.4]{gess2019optimal} is the estimate of the term containing the stochastic integral in Step 2(i) below which is based on the integral representation of the stochastic term as outlined in the Introduction. Steps 1(i), 3(i) and 4(i) below are identical to \cite[Proof of Lemma 4.4 Steps 1, 2 and 3]{gess2019optimal}, respectively with the difference that here the estimates are also $\omega$-dependent and, the estimates of Steps 3(i) and 4(i) are evaluated in $L^2$ in time.  
\subparagraph{Step 1(i)} Let $l > 0 $ be arbitrary and fixed.  We follow the same arguments as in  \cite [Proof of Lemma 4.4 Step 1]{gess2019optimal}. Thus, we note that the estimate 
\begin{equation*}
	 \| \mathcal{F}_t^{-1} \hat{\eta}_{l} \mathcal{F}_t \chi \|_{L^{\beta}_{\omega,t,x}} \lesssim \| \chi   \|_{L^{\beta}_{\omega,t,x}},
\end{equation*}
holds for a constant independent of $l$. Since $ \abs{v} \le \delta $ on the support of $  \psi_0 \left( \frac{ \abs{v} }{\delta} \right)$, we may use Minkowski's and H{\"o}lder's inequality to get
\begin{equation} \label{velocity_degenerate_time}
	\begin{split}
		\left\Vert \int_v \chi^0_{l} \ dv  \right\Vert_{L^{\beta}_{\omega,t,x}}  & = \left\Vert \int_{v}    \psi_0 \left(\frac{ \abs{v} }{\delta} \right)  \chi_{l}  \ dv  \right\Vert_{L^{\beta}_{\omega,t,x}}   \\
		& \le   \int_{v}   \abs{ \psi_0} \left(\frac{\abs{v} }{\delta} \right) \| \chi_{l} \|_{L^{\beta}_{\omega, t,x}} dv  
		\\		&  \lesssim     \| \chi    \|_{L^{\beta}_{\omega,t,x,v}} \left(  \int_{v}   \abs{ \psi_0} \left(\frac{ \abs{v} }{\delta} \right)^{\frac{1}{\rho}}  dv  \right)^{\rho}  
		\\ & 	\lesssim \delta^{\rho}   \| \chi   \|_{ L^{\beta}_{\omega,t,x,v}}. 
	\end{split}
\end{equation}
\subparagraph{Step 2(i)} 
Let $l> 0 $ be arbitrary and fixed. We estimate the second term on the right-hand side of \eqref{decomposition_time_v_small}. Observe that $\chi^{1,1}$ in \eqref{non_degenerate_decomposition_starting} is given by
\begin{equation} \label{non_degenerate_decomposition_starting_step2_time}
	\begin{split}
 \chi^{1,1}(t,x,v)= \sum_{k=1}^{\infty} \int_0^t \int_{\mathbb{T}^d} \Phi(a(v)(t-s),x-y) \psi_1 \left( \frac{\abs{v}}{\delta} \right)  \delta_{\tilde{u}(s,y)=v }   \mathring{g}_k(s,y,v) \ dy \  d\beta_k(s).
	\end{split}
\end{equation}
As in the proof of Lemma \ref{Isotropic_Averaging_Lemma}, we denote the first derivative with respect to the first argument of $ \Phi$ by $ \Phi_1$ and let
\begin{equation*}
	\begin{split}
		\tilde{\Phi} (t,x)  &  := \Phi_1 (t,x) t.
	\end{split}
\end{equation*}
Note that 
\begin{equation}  \label{bound_Phi_tilde_norm_1}
	\begin{split}
		\|  \tilde{\Phi} ( t,\cdot) \|_{L^1_x}  \lesssim 1,
	\end{split}
\end{equation} 
and
\begin{equation}  \label{bound_Phi_tilde_derivative_norm_1}
	\begin{split}
		\|  \partial_t \tilde{\Phi} ( t,\cdot) \|_{L^1_x}  \lesssim t^{-1}.
	\end{split}
\end{equation} 
For fixed $t,r$, let
\begin{equation*}
	\begin{split}
	\Phi_{\text{inc}}(a(v)s,x) & := \Phi(a(v)(t-r-s),x) - \Phi(a(v)(t-s),x),
\\  \tilde{\Phi}_{\text{inc}}(a(v)s,x) & := \tilde{\Phi}(a(v)(t-r-s),x) - \tilde{\Phi}(a(v)(t-s),x).
	\end{split}
\end{equation*} 
Using \eqref{non_degenerate_decomposition_starting_step2_time}, we have
\begin{equation} \label{First_decomposition_time_regularity_estimate}
	\begin{split}
		& \int_v \chi^{1,1}(t-r,x,v) - \chi^{1,1}(t,x,v) \ dv
		\\ & = \int_v  \sum_{k=1}^{\infty} \int_0^{t-r} \int_y 	\Phi_{\text{inc}}(a(v)s,x-y)  \delta_{\tilde{u}(s,y)=v} \psi_1 \left(\frac{ \abs{v}}{\delta} \right) \mathring{g}_k(s,y,v)    \ dy \ d\beta_k(s) \ dv
\\	& - \int_v \sum_{k=1}^{\infty} \int_{t-r}^t \int_y  \Phi(a(v)(t-s),x-y)  \delta_{\tilde{u}(s,y)=v}  \psi_1 \left(\frac{ \abs{v}}{\delta} \right) \mathring{g}_k(s,y,v)    \ dy \ d\beta_k(s) \ dv
\\ & =  \sum_{k=1}^{\infty} \int_0^{t-r} \int_{y,v}   \mathbbm{1}_{[0,\tilde{u}(s,y)]}(v) \  \partial_v \left( 	\Phi_{\text{inc}}(a(v)s,x-y)  \psi_1 \left(\frac{ \abs{v}}{\delta} \right) \right)       \ dv  \\ & \times  \mathring{g}_k(s,y,\tilde{u}(s,y))    \ dy \ d\beta_k(s) 
		\\ & -  \sum_{k=1}^{\infty} \int_{t-r}^t \int_{y,v}   \mathbbm{1}_{[0,\tilde{u}(s,y)]}(v) \  \partial_v \left( \Phi(a(v)(t-s),x-y)   \psi_1 \left(\frac{ \abs{v}}{\delta} \right) \right)   \ dv  \\ & \times  \mathring{g}_k(s,y,\tilde{u}(s,y))    \ dy \ d\beta_k(s)
			\\ &  =I + II - III - IV,
	\end{split}
\end{equation}
where
\begin{equation*}
		\begin{split}
I  := &	 \sum_{k=1}^{\infty} \int_0^{t-r} \int_{v,y}  \abs{v}^{-\nu} \mathbbm{1}_{[0,\tilde{u}(s,y)]}(v)  \	\tilde{\Phi}_{\text{inc}}(a(v)s,x-y)  \frac{a^{'}(v)}{a(v)} \abs{v}^{ \nu}   \psi_1 \left(\frac{ \abs{v}}{\delta} \right)    \\ & \times  \mathring{g}_k(s,y,\tilde{u}(s,y))    \ dy  \ dv \  d\beta_k(s),
\\  II  :=  & \sum_{k=1}^{\infty}  \int_0^{t-r} \int_{v,y} \abs{v}^{- \nu} \mathbbm{1}_{[0,\tilde{u}(s,y)]}(v) \	\Phi_{\text{inc}}(a(v)s,x-y) \abs{v}^{-1+ \nu} \psi_1^{'} \left(\frac{ \abs{v}}{\delta} \right)    \frac{\abs{v} \sgn(v)}{\delta}   \\ & \times   \mathring{g}_k(s,y,\tilde{u}(s,y))    \ dy  \ dv  \ d\beta_k(s),
\\ III  := & \sum_{k=1}^{\infty}  \int_{t-r}^t \int_{v,y} \abs{v}^{- \nu} \mathbbm{1}_{[0,\tilde{u}(s,y)]}(v)  \ \tilde{\Phi}(a(v)(t-s),x-y)  \frac{a^{'}(v)}{a(v)} \abs{v}^{\nu} \psi_1 \left(\frac{ \abs{v}}{\delta} \right)      \\ & \times  \mathring{g}_k(s,y,\tilde{u}(s,y))    \ dy  \ dv  \ d\beta_k(s),
	\end{split}
\end{equation*} 
\begin{equation*}
\begin{split}
 IV  := & \sum_{k=1}^{\infty} \int_{t-r}^t \int_{v,y} \abs{v}^{- \nu} \mathbbm{1}_{[0,\tilde{u}(s,y)]}(v) \ \Phi(a(v)(t-s),x-y) \abs{v}^{-1+ \nu} \psi_1^{'} \left(\frac{ \abs{v}}{\delta} \right) \frac{\abs{v} \sgn(v)}{\delta}   \\ & \times  \mathring{g}_k(s,y,\tilde{u}(s,y))    \ dy  \ dv   \ d\beta_k(s) .
	\end{split}
\end{equation*} 
Using \eqref{definition_av_time_regularity}, we have
\begin{equation} \label{bound_a_prime_v_time}
		\begin{split}
 \frac{a^{'}(v)}{a(v)}  \lesssim \abs{v}^{-1}.
	\end{split}
\end{equation}
By the definition of $ \tilde{\Phi}_{\text{inc}}$ and \eqref{bound_Phi_tilde_norm_1}, we have
\begin{equation} \label{bound_2_phi_inc}
\|  \tilde{\Phi}_{\text{inc}}(a(v)s,\cdot)    \|_{L^1_x}^2  \le 2( \|  \tilde{\Phi}(a(v)(t-r-s),\cdot)  \|_{L^1_x}^2  + \|   \tilde{\Phi}(a(v)(t-s),\cdot)  \|_{L^1_x}^2)  \lesssim 1.
\end{equation}
Using \eqref{bound_Phi_tilde_derivative_norm_1}, we have for $s \le t-r$
\begin{equation} \label{L1_estimate_inc_time}
	\begin{split}
\|  \tilde{\Phi}_{\text{inc}}(a(v)s,\cdot)    \|_{L^1_x}^2 & \lesssim   \left( \int_{t-r}^{t} \left \| \partial_{\tau}  \tilde{\Phi}(a(v)(\tau-s),\cdot) \right \|_{L^1_x}   d\tau \right)^2 \\ &  \lesssim \left( \int_{t-r}^{t} (\tau -s)^{-1} \ d\tau \right)^2 \le r^2 (t-r-s)^{-2}.
\end{split}
\end{equation}
Recall that $\zeta \in (0,\frac{1}{2})$. We deduce using \eqref{bound_2_phi_inc} and \eqref{L1_estimate_inc_time} that
\begin{equation} \label{L1_estimate_inc_time_half}
	\begin{split}
		\|  \tilde{\Phi}_{\text{inc}}(a(v)s,\cdot)    \|_{L^1_x}^2 & \lesssim   r^{ 2 \zeta} (t-r-s)^{- 2 \zeta}.
	\end{split}
\end{equation}
Let $ \nu \in ( \frac{1}{2}, \infty)$. For fixed $t,r$, we estimate the term $I$ in \eqref{First_decomposition_time_regularity_estimate} using Cauchy--Schwarz inequality, It{\^o}'s isometry, estimate \eqref{bound_a_prime_v_time}, Young's convolution inequality and estimate \eqref{L1_estimate_inc_time_half}  
\begin{equation*}  
		\begin{split}
& \mathbb{E}	\left \|  	I     \right \|_{L^{1}_{x}}^2  =  \mathbb{E} \Bigg( \int_{x} \Bigg | \sum_{k=1}^{\infty} \int_0^{t-r} \int_{v,y} \abs{v}^{- \nu} \mathbbm{1}_{[0,\tilde{u}(s,y)]}(v)  \  \tilde{\Phi}_{\text{inc}}(a(v)s,x-y) \frac{a^{'}(v)}{a(v)} \abs{v}^{\nu} \psi_1 \left(\frac{ \abs{v}}{\delta} \right) 
\\     & \times    \  \mathring{g}_k(s,y,\tilde{u}(s,y))    \ dy \ dv  \ d\beta_k(s)  \Bigg |  dx \Bigg)^2
	\\ & \le   \int_{x}   \mathbb{E} \Bigg | \sum_{k=1}^{\infty} \int_0^{t-r}  \int_{v,y} \abs{v}^{- \nu} \mathbbm{1}_{[0,\tilde{u}(s,y)]}(v)  \  \tilde{\Phi}_{\text{inc}}(a(v)s,x-y) \frac{a^{'}(v)}{a(v)}  \abs{v}^{\nu} \psi_1 \left(\frac{ \abs{v}}{\delta} \right)   \\ & \times   \mathring{g}_k(s,y,\tilde{u}(s,y))    \ dy \ dv \  d\beta_k(s)  \Bigg |^2   dx 
	\\		& =   \int_{x}  \mathbb{E} \sum_{k=1}^{\infty}  \int_0^{t-r} \Bigg( \int_v \abs{v}^{- \nu} \psi_1 \left(\frac{ \abs{v}}{\delta} \right)  \int_{y}  \mathbbm{1}_{[0,\tilde{u}(s,y)]}(v)  \  \tilde{\Phi}_{\text{inc}}(a(v)s,x-y) \frac{a^{'}(v)}{a(v)} \abs{v}^{\nu}  \\ & \times     \mathring{g}_k(s,y,\tilde{u}(s,y))    \ dy \ dv    \Bigg)^2 ds \   dx
				\end{split}
\end{equation*}	
\begin{equation} \label{Estimate_I_time_regularity_decomposition}
	\begin{split} 
	&  \lesssim  \mathbb{E}   \sum_{k=1}^{\infty}  \int_0^{t-r}  \int_{\abs{v} \ge \delta}  \abs{v}^{- 2 \nu}  dv   \int_v \abs{v}^{2 \nu -2} \\ & \times \int_{x}  \left |  \int_{y}  \tilde{\Phi}_{\text{inc}}(a(v)s,x-y)      \mathring{g}_k(s,y,\tilde{u}(s,y))  \mathbbm{1}_{[0,\tilde{u}(s,y)]}(v)   \ dy   \right |^2 dx \ dv \  ds
		 \\ &	\lesssim  \delta^{1 - 2 \nu }      \mathbb{E} \sum_{k=1}^{\infty}    \int_0^{t-r}   \int_v  \abs{v}^{2 \nu -2}      \left \|   \tilde{\Phi}_{\text{inc}}(a(v)s,\cdot)  \right \|^{2 }_{L^1_x}  \int_x |\mathring{g}_k(s,x,\tilde{u}(s,x))|^2  \mathbbm{1}_{[0,\tilde{u}(s,x)]}(v) \ dx  \     dv \  ds    
\\ &	\lesssim  \delta^{1 - 2 \nu } r^{2 \zeta}     \mathbb{E} \sum_{k=1}^{\infty} \int_x     \int_0^{t-r}  \left(   t-r -s \right)^{- 2 \zeta}     \abs{\tilde{u}(s,x)}^{2 \nu -1}     \abs{\mathring{g}_k(s,x,\tilde{u}(s,x))}^2     \  ds  \ dx.
				\end{split}
\end{equation}
The term $II$ in \eqref{First_decomposition_time_regularity_estimate} is estimated similarly since $\abs{v} \sim \delta$ on the support of $\psi_1^{'} \left( \frac{\abs{v}}{\delta} \right)$. For fixed $t,r$, we have
\begin{equation}  \label{Estimate_II_time_regularity_decomposition}
	\begin{split}
		& \mathbb{E}	\left \|  	II     \right \|_{L^{1}_{x}}^2 \\ &	\lesssim   	  \delta^{1 - 2 \nu } r^{ 2 \zeta}  \mathbb{E}  \sum_{k=1}^{\infty} \int_x     \int_0^{t-r}   \left(   t-r -s \right)^{- 2 \zeta}      \abs{\tilde{u}(s,x)}^{2 \nu -1}     \abs{\mathring{g}_k(s,x,\tilde{u}(s,x))}^2     \  ds  \ dx .
\end{split}
\end{equation}
Recall that $\varpi^{-1}+ \varkappa^{-1} =1 $. For fixed $t,r$, we estimate the term $III$ in \eqref{First_decomposition_time_regularity_estimate} using Cauchy--Schwarz inequality, It{\^o}'s isometry, estimate \eqref{bound_a_prime_v_time}, Young's convolution inequality, estimate \eqref{bound_Phi_tilde_norm_1} and H{\"o}lder's inequality  
\begin{equation*}   
	\begin{split}
& \mathbb{E}	\left \|  	III     \right \|_{L^{1}_{x}}^2
 = \mathbb{E} \Bigg( \int_{x}	\Bigg | 	\sum_{k=1}^{\infty}	 \int_{t-r}^t \int_{v,y}  \abs{v}^{- \nu} \mathbbm{1}_{[0,\tilde{u}(s,y)]}(v) \ \tilde{\Phi}(a(v)(t-s),x-y)  \frac{a^{'}(v)}{a(v)}   \abs{v}^{\nu}
\\  & \times   \psi_1 \left(\frac{ \abs{v}}{\delta} \right)    \mathring{g}_k(s,y,\tilde{u}(s,y))    \ dy \ dv  \ d\beta_k(s)  	\Bigg | \ dx \Bigg)^2 .
\\ & \le  \int_{x}   \mathbb{E} \Bigg | \sum_{k=1}^{\infty} \int_{t-r}^t \int_{v,y}  \abs{v}^{- \nu} \mathbbm{1}_{[0,\tilde{u}(s,y)]}(v) \ \tilde{\Phi}(a(v)(t-s),x-y)  \frac{a^{'}(v)}{a(v)} \abs{v}^{ \nu} \psi_1 \left(\frac{ \abs{v}}{\delta} \right)    \\ & \times  \mathring{g}_k(s,y,\tilde{u}(s,y))    \ dy  \ dv \ d\beta_k(s)     \Bigg |^2  dx
\\	& = \int_{x}  \mathbb{E} \sum_{k=1}^{\infty} \int_{t-r}^t \Bigg( \int_v \abs{v}^{- \nu} \psi_1 \left(\frac{ \abs{v}}{\delta} \right) \int_{y}   \mathbbm{1}_{[0,\tilde{u}(s,y)]}(v) \ \tilde{\Phi}(a(v)(t-s),x-y) \frac{a^{'}(v)}{a(v)}     \abs{v}^{\nu}
\\	& \times  \mathring{g}_k(s,y,\tilde{u}(s,y))    \ dy  \ dv   \Bigg )^2 ds 
\ dx 
\\ & \lesssim  \delta^{1 - 2 \nu }     \mathbb{E} \sum_{k=1}^{\infty}   \int_{t-r}^t \int_v \abs{v}^{ 2 \nu -2 } \int_x \Bigg |   \int_y \mathbbm{1}_{[0,\tilde{u}(s,y)]}(v) \ \tilde{\Phi}(a(v)(t-s),x-y)   
		\end{split}
\end{equation*}  
\begin{equation} \label{Estimate_III_time_regularity_decomposition}
	\begin{split} 
  & \times  \mathring{g}_k(s,y,\tilde{u}(s,y))    \ dy  \Bigg |^2  dx  \ dv  \  ds  
\\  & \lesssim  \delta^{1 - 2 \nu }     \sum_{k=1}^{\infty} \mathbb{E}  \int_{t-r}^t \int_v \abs{v}^{ 2 \nu -2 } \int_x  \mathbbm{1}_{[0,\tilde{u}(s,x)]}(v)   \abs{  \mathring{g}_k(s,x,\tilde{u}(s,x))     }^2 \ dx  \ dv  \  ds  
\\ & =  \delta^{1 - 2 \nu }   \sum_{k=1}^{\infty}   \int_{t-r}^t  \mathbb{E}  \int_x | \tilde{u}(s,x) |^{2 \nu - 1}     | \mathring{g}_k(s,x,\tilde{u}(s,x))     |^{2 \left(\frac{1}{\varpi}+\frac{1}{\varkappa}\right)} \  dx    \  ds
 \\ & \le \delta^{1 - 2 \nu }    \sum_{k=1}^{\infty}   \int_{t-r}^t  \mathbb{E} \left( \int_x | \mathring{g}_k(s,x,\tilde{u}(s,x))     |^{2 } dx \right)^{\frac{1}{\varpi}} 
 \\ & \times  \left( \int_x | \tilde{u}(s,x) |^{\varkappa(2 \nu - 1)}     | \mathring{g}_k(s,x,\tilde{u}(s,x))     |^{2} \  dx \right)^{\frac{1}{\varkappa}}    \  ds  
  \\   & \le \delta^{1 - 2 \nu }  r^{\frac{1}{ \varpi}}  \sum_{k=1}^{\infty}   \esssup_{s \in [t-r,t]}   \left \|  \mathring{g}^2 _k(s,\cdot,\tilde{u})  \right \|_{L^1_{\omega,x}}^{\frac{1}{\varpi}}  \left( \int_{t-r}^t  \left \|   \tilde{u}(s,\cdot)^{\varkappa(2 \nu - 1)}   \mathring{g}_k^2 (s,\cdot,\tilde{u})  \right \|_{L^1_{\omega, x} }      \   ds \right)^{\frac{1}{\varkappa}}  
    \\   & \le \delta^{1 - 2 \nu }  r^{\frac{1}{ \varpi}}  \sum_{k=1}^{\infty}   \esssup_{s \in [0,T]}   \left \|  \mathring{g}_k^2(s,\cdot,\tilde{u})   \right \|_{L^1_{\omega,x}}^{\frac{1}{\varpi}}   \left \|    \tilde{u}^{\varkappa(2 \nu - 1)}  \mathring{g}_k^2   \right \|_{L^1_{\omega,s,x}}^{\frac{1}{\varkappa}}  .
\end{split}
\end{equation}
The term $IV$  in \eqref{First_decomposition_time_regularity_estimate} is estimated similarly since $\abs{v} \sim \delta$ on the support of $\psi_1^{'} \left( \frac{\abs{v}}{\delta} \right)$. Thus, 
\begin{equation} \label{Estimate_IV_time_regularity_decomposition}
	\begin{split}
	\mathbb{E}	\left \|  	IV     \right \|_{L^{1}_{x}}^2 & 	   \lesssim \delta^{1 - 2 \nu }  r^{\frac{1}{ \varpi}}    \sum_{k=1}^{\infty}   \esssup_{s \in [0,T]}   \left \|  \mathring{g}_k^2(s,\cdot,\tilde{u})   \right \|_{L^1_{\omega,x}}^{\frac{1}{\varpi}}   \left \|    \tilde{u}^{\varkappa(2 \nu - 1)}  \mathring{g}_k^2   \right \|_{L^1_{\omega,s,x}}^{\frac{1}{\varkappa}}  .
	\end{split}
\end{equation}
Recall that $\varpi^{-1}> 2 \zeta$. The estimate for the second term on the right-hand side of \eqref{decomposition_time_v_small} follows using Minkowski's integral inequality and combining \eqref{First_decomposition_time_regularity_estimate} with bounds \eqref{Estimate_I_time_regularity_decomposition}, \eqref{Estimate_II_time_regularity_decomposition}, \eqref{Estimate_III_time_regularity_decomposition}, \eqref{Estimate_IV_time_regularity_decomposition} 
\begin{equation*} 
	\begin{split}
			& 	\left\Vert    \int_{v,r} \eta_{l}(r) (\chi^{1,1}(\cdot-r,\cdot,v)  - \chi^{1,1}(\cdot,\cdot,v))   \ dr  \  dv \right\Vert_{L^1(\Omega;  L^2(([-1,T+1]) ;  L^1(\mathbb{T}^d)))} 
	\\	& \lesssim	\left\Vert    \int_{v,r} \eta_{l}(r) (\chi^{1,1}(\cdot-r,\cdot,v)  - \chi^{1,1}(\cdot,\cdot,v))   \ dr  \  dv \right\Vert_{L^2(\Omega;  L^2(([-1,T+1]) ;  L^1(\mathbb{T}^d)))} 
	\\ 		& \le \left \{   	\int_{-1}^{T+1}  \left[   \int_r \abs{\eta_{l}(r)} \left( \mathbb{E} \Bigg \|    \int_v  (\chi^{1,1}(t-r,\cdot,v)  - \chi^{1,1}(t,\cdot,v))    \  dv \Bigg \|_{L^1_x}^2 \right)^{\frac{1}{2}} dr \right]^2      dt   	  \right \}^{\frac{1}{2}} 
	\\ & 	\lesssim \left \{   	\int_{-1}^{T+1}   \left[    \int_r \abs{\eta_{l}(r) } \left( \mathbb{E}	\left \|  	I     \right \|_{L^{1}_{x}}^2 + \mathbb{E}	\left \|  	II     \right \|_{L^{1}_{x}}^2 + \mathbb{E}	\left \|  	III     \right \|_{L^{1}_{x}}^2 + \mathbb{E}	\left \|  	IV     \right \|_{L^{1}_{x}}^2   \right)^{\frac{1}{2}} dr \right]^2      dt   	  \right \}^{\frac{1}{2}} 
\end{split}
\end{equation*}
\begin{equation} \label{Final_step1_time_reg_small_v}
	\begin{split}
	& \lesssim \delta^{\frac{1}{2} - \nu } \Bigg \{   	\int_{-1}^{T+1}   \Bigg [    \int_r \abs{\eta_{l}(r)  }  r^{  \zeta} \\ & \times \left( \mathbb{E}  \sum_{k=1}^{\infty} \int_x     \int_0^{t-r}   \left(   t-r -s \right)^{- 2 \zeta}      \abs{\tilde{u}(s,x)}^{2 \nu -1}     \abs{\mathring{g}_k(s,x,\tilde{u}(s,x))}^2     \  ds  \ dx \right)^{\frac{1}{2}} dr \Bigg]^2      dt   	  \Bigg \}^{\frac{1}{2}} 
	\\			& + \delta^{\frac{1}{2} - \nu } \left \{  	\int_{-1}^{T+1}   \left[    \int_r \abs{\eta_{l}(r) }  r^{\frac{1}{ 2 \varpi}} 
	\left( \sum_{k=1}^{\infty}   \esssup_{s \in [0,T]}   \left \|  \mathring{g}_k^2(s,\cdot,\tilde{u})   \right \|_{L^1_{\omega,x}}^{\frac{1}{\varpi}}   \left \|    \tilde{u}^{\varkappa(2 \nu - 1)}  \mathring{g}_k^2   \right \|_{L^1_{\omega,s,x}}^{\frac{1}{\varkappa}}  \right)^{\frac{1}{2}} dr \right]^2      dt   	  \right \}^{\frac{1}{2}}
	\\	 & \lesssim \delta^{\frac{1}{2} - \nu } \Bigg \{   	\int_{-1}^{T+1}  \Bigg[    \int_{\tau} \abs{\eta_{l}(t- \tau)}  (t- \tau)^{  \zeta} \\ & \times \left( \mathbb{E}  \sum_{k=1}^{\infty} \int_x     \int_0^{\tau}   \left(   \tau -s \right)^{- 2 \zeta}      \abs{\tilde{u}(s,x)}^{2 \nu -1}     \abs{\mathring{g}_k(s,x,\tilde{u}(s,x))}^2     \  ds  \ dx \right)^{\frac{1}{2}} d\tau \Bigg]^2      dt   	  \Bigg \}^{\frac{1}{2}} 	
	\\ & + \delta^{\frac{1}{2} - \nu }   	       \int_r \abs{\eta_{l}(r)  }  r^{\frac{1}{ 2 \varpi}} \ dr \left( \sum_{k=1}^{\infty}   \esssup_{s \in [0,T]}   \left \|  \mathring{g}_k^2(s,\cdot,\tilde{u})   \right \|_{L^1_{\omega,x}}^{\frac{1}{\varpi}}   \left \|    \tilde{u}^{\varkappa(2 \nu - 1)}  \mathring{g}_k^2   \right \|_{L^1_{\omega,s,x}}^{\frac{1}{\varkappa}}  \right)^{\frac{1}{2}} 	
	\\	& \lesssim \delta^{\frac{1}{2} - \nu } 2^{- l \zeta }    \left( \mathbb{E}  \sum_{k=1}^{\infty} \int_x     \int_0^{T}         \abs{\tilde{u}(s,x)}^{2 \nu -1}     \abs{\mathring{g}_k(s,x,\tilde{u}(s,x))}^2     \  ds  \ dx \right)^{\frac{1}{2}}  
 \\ & + \delta^{\frac{1}{2} - \nu }   	     2^{- \frac{l}{ 2 \varpi}}  \left( \sum_{k=1}^{\infty}   \esssup_{s \in [0,T]}   \left \|  \mathring{g}_k^2(s,\cdot,\tilde{u})   \right \|_{L^1_{\omega,x}}^{\frac{1}{\varpi}}   \left \|    \tilde{u}^{\varkappa(2 \nu - 1)}  \mathring{g}_k^2   \right \|_{L^1_{\omega,s,x}}^{\frac{1}{\varkappa}}  \right)^{\frac{1}{2}}
    \\  & \le \delta^{\frac{1}{2} - \nu }  2^{-l \zeta  } \Bigg[ \left( \sum_{k=1}^{\infty} \|      \tilde{u}^{ \nu -\frac{1}{2}}   \mathring{g}_k    \|_{L^2_{\omega,t,x}}^2 \right)^{\frac{1}{2}}    +     \Bigg(  \sum_{k=1}^{\infty}     \esssup_{t \in [0,T]}    \left \|    \mathring{g}_k^2(t,\cdot,\tilde{u})  \right \|_{L^1_{\omega,x}}^{\frac{1}{\varpi}}  \\ & \times    \left \|  \tilde{u}^{\varkappa(2 \nu - 1)}  \mathring{g}_k^2    \right \|_{L^1_{\omega,t,x}}^{\frac{1}{\varkappa}} \Bigg)^{\frac{1}{2}}  \Bigg].
	\end{split}
\end{equation}		
\subparagraph{Step 3(i)}  Let $l > 0 $ be arbitrary and fixed. Following the same arguments of \cite [Proof of Lemma 4.4 Step 2]{gess2019optimal}, we obtain using Bernstein's lemma
\begin{equation*} 
	\begin{split}
	&	\left\Vert  \int_{v} \chi^{1,2}_l \ dv  \right\Vert_{L^1(\Omega; L^2(\mathbb{R};  L^1(\mathbb{T}^d)))} 
		\\	&  \lesssim 2^{ \frac{l}{2}}	\left\Vert  \int_{v} \chi^{1,2}_l \ dv  \right\Vert_{L^1(\Omega \times  \mathbb{R} \times  \mathbb{T}^d)}   
		 \\ & = 2^{ \frac{l}{2}} \left\Vert   \int_{v}  \mathcal{F}_{t,x}^{-1}   \hat{\eta}_{l}   \psi_1 \left(\frac{ \abs{v} }{\delta} \right) \frac{ \abs{v}^{\gamma-1} }{ \mathcal{L}(i\tau, \xi,v) }  \mathcal{F}_{t,x} \abs{v}^{1- \gamma}  h  \ dv \right\Vert_{L^1(\Omega \times  \mathbb{R} \times  \mathbb{T}^d)}  
	\end{split}
\end{equation*}
\begin{equation} \label{Estimate_cut_time_reg_small_v}
	\begin{split}	
 &	\lesssim \delta^{\gamma -1} 2^{ - \frac{l}{2}}   \| \abs{v}^{1- \gamma}   h  \|_{L^1_{\omega} \mathcal{M}_{\text{TV}}}.
	\end{split}
\end{equation}

\subparagraph{Step 4(i)} Let $l > 0 $ be arbitrary and fixed. We have 
\begin{equation} \label{Expression_kinetic_time_small_v}
	\begin{split}
		\int_{v} \chi^{1,3}_l \  dv & = -  \int_{v}  \mathcal{F}_{t,x}^{-1} \hat{\eta}_{l}  \psi_1^{'} \left(\frac{ \abs{v} }{\delta} \right) \frac{ \abs{v} }{\delta}  \frac{ \sgn(v) \abs{v}^{ \gamma-1}}{ \mathcal{L}(i\tau, \xi,v)} \mathcal{F}_{t,x} \abs{v}^{- \gamma} \tilde{h}  \ dv  \\  & + m (m -1)  \int_{v}  \mathcal{F}_{t,x}^{-1} \hat{\eta}_{l} \psi_1 \left(\frac{ \abs{v} }{\delta} \right)  \frac{ 4 \pi^2 \abs{v}^{m-2 + \gamma} \sgn(v) \abs{\xi}^2}{ \mathcal{L}(i\tau, \xi,v)^2} \mathcal{F}_{t,x}    \abs{v}^{- \gamma} \tilde{h}  \ dv. 
	\end{split}
\end{equation}
Using Bernstein's lemma, \eqref{Expression_kinetic_time_small_v} and the same arguments of \cite [Proof of Lemma 4.4 Step 3]{gess2019optimal}, we obtain  
\begin{equation} \label{Kinetic_First_Averaging_small_v}
	\begin{split}
		\left\Vert \int_{v} \chi^{1,3}_l \ dv \right\Vert_{L^1(\Omega; L^2(\mathbb{R};  L^1(\mathbb{T}^d)))}  	&  \lesssim  2^{ \frac{l}{2}} \left\Vert \int_{v} \chi^{1,3}_l \ dv \right\Vert_{L^1(\Omega \times  \mathbb{R} \times  \mathbb{T}^d)} \\ &   \lesssim \delta^{\gamma -1} 2^{- \frac{l}{2}}    \| \abs{v}^{- \gamma} \tilde{h}  \|_{L^1_{\omega} \mathcal{M}_{\text{TV}}}.
	\end{split} 
\end{equation}

\subparagraph{Step 5(i)} We combine the estimates obtained in the previous steps by real interpolation. This argument relies on embeddings of $L^p$ spaces in bounded domains; for this reason, we restrict the norms to the time interval $[-1,T+1]$. The norms in the time interval $ \mathbb{R} \setminus [-1,T+1]$ are bounded after the interpolation argument. 

For $z>0$, let
\begin{equation*}
	\begin{split}
	K(z, \bar{\chi}_{l})  := \inf \left \{ \left \| \int_v \chi^1_{l} \ dv \right \|_{ L^{1}(\Omega;  L^2([-1,T+1]; L^1(\mathbb{T}^d)))}  + z \left \| \int_v \chi^0_{l} \ dv  \right \|_{ L^{\beta}(\Omega \times [-1,T+1] \times \mathbb{T}^d) }  \right \}.
	\end{split}
\end{equation*}
By the estimates \eqref{velocity_degenerate_time},  \eqref{Final_step1_time_reg_small_v}, \eqref{Estimate_cut_time_reg_small_v} and \eqref{Kinetic_First_Averaging_small_v}, we have
\begin{equation*}
	\begin{split}
	K(z, \bar{\chi}_{l}) & \lesssim  	\delta^{\frac{1}{2}- \nu} 2^{-l \zeta}  \Bigg[ \left( \sum_{k=1}^{\infty} \|      \tilde{u}^{ \nu -\frac{1}{2}}  \mathring{g}_k   \|_{L^2_{\omega,t,x}}^2 \right)^{\frac{1}{2}} 
\\   &  +     \left( \sum_{k=1}^{\infty}   \esssup_{t \in [0,T]}    \left \|    \mathring{g}_k^2(t,\cdot,\tilde{u})  \right \|_{L^1_{\omega,x}}^{\frac{1}{\varpi}}  \left \|    \tilde{u}^{\varkappa(2 \nu - 1)}  \mathring{g}_k^2   \right \|_{L^1_{\omega,t,x}}^{\frac{1}{\varkappa}} \right)^{\frac{1}{2}}  \Bigg] \\ & + \delta^{\gamma -1} 2^{-   \frac{l}{2} } ( \| \abs{v}^{1- \gamma}   h  \|_{L^1_{\omega} \mathcal{M}_{\text{TV}}} + \| \abs{v}^{- \gamma} \tilde{h}  \|_{L^1_{\omega} \mathcal{M}_{\text{TV}}} ) + z \delta^{\rho } \| \chi  \|_{ L^{\beta}_{\omega,t,x,v}}.
	\end{split}
\end{equation*}
We choose $\nu = \frac{3}{2} - \gamma$ so that the above estimate becomes
\begin{equation} \label{K_functional_bootstrap_time_reg_small_v}
	K(z, \bar{\chi}_{l})  \lesssim      \delta^{\gamma -1} 2^{- l \zeta } \mathcal{K}_{(i)} + z \delta^{\rho } \| \chi  \|_{ L^{\beta}_{\omega,t,x,v}},
\end{equation}
where 
\begin{equation*}
	\begin{split}
	\mathcal{K}_{(i)} & = \left( \sum_{k=1}^{\infty} \|     \tilde{u}^{ 1-\gamma}  \mathring{g}_k    \|_{L^2_{\omega,t,x}}^2 \right)^{\frac{1}{2}}     +     \left( \sum_{k=1}^{\infty}     \esssup_{t \in [0,T]}    \left \|    \mathring{g}_k^2(t,\cdot,\tilde{u})  \right \|_{L^1_{\omega,x}}^{\frac{1}{\varpi}}  \left \|  \tilde{u}^{2 \varkappa(1-\gamma)}   \mathring{g}_k^2  \right \|_{L^1_{\omega,t,x}}^{\frac{1}{\varkappa}} \right)^{\frac{1}{2}}  \\ & + \| \abs{v}^{1- \gamma}   h  \|_{L^1_{\omega} \mathcal{M}_{\text{TV}}}  + \| \abs{v}^{- \gamma} \tilde{h}  \|_{L^1_{\omega} \mathcal{M}_{\text{TV}}} .
	\end{split}
\end{equation*}
We equilibrate the first and the second term on the right-hand side in \eqref{K_functional_bootstrap_time_reg_small_v} as follows
\begin{equation*} \label{calibration_bounds_time_reg_small_v}
	\delta^{\gamma -1} 2^{- l \zeta } = z \delta^{\rho }.
\end{equation*}
This allows us to derive the value of $ \delta$ from the  expression above, namely $ \delta = z^{- \frac{1}{1 - \gamma+\rho}}   2^{-l  \frac{ \zeta }{1 - \gamma+\rho}}$ and plug into the right-hand side of \eqref{K_functional_bootstrap_time_reg_small_v} that becomes 
\begin{equation} \label{z_small_time_reg_small_v}
	z^{- \theta} K(z, \bar{\chi}_{l}) \lesssim 2^{-l \tilde{\kappa}_t }  \left(  \mathcal{K}_{(i)} + \| \chi   \|_{ L^{\beta}_{\omega,t,x,v}}  \right),  
\end{equation} 
where $\theta :=  \frac{ 1 - \gamma }{1 - \gamma + \rho} $ and $ \tilde{\kappa}_t  :=  \frac{\zeta \rho }{1 - \gamma + \rho}$.
We take the supremum over $z>0$ on both sides in \eqref{z_small_time_reg_small_v} to have
\begin{equation}  \label{z_small_time_reg_small_v_after}
	\begin{split}
	\| \bar{\chi}_{l} \|_{\left(L^{1}(\Omega;  L^2([-1,T+1]; L^1(\mathbb{T}^d))), L^{\beta}(\Omega \times [-1,T+1] \times \mathbb{T}^d) \right)_{\theta, \infty}} \lesssim 2^{-l \tilde{\kappa}_t }  \left(  \mathcal{K}_{(i)} + \| \chi   \|_{ L^{\beta}_{\omega,t,x,v}}  \right).
	\end{split}
\end{equation}
Let
\begin{equation*}
	\begin{split}
	\bar{p} & = \frac{1}{1- \theta \rho}=  \frac{1 - \gamma + \rho}{\rho + (1-\rho) (1 - \gamma  ) }, 
	\\	q & = \frac{2}{1+ (\theta- \epsilon) (1- 2 \rho)}= \frac{2(1- \gamma + \rho)}{1- \gamma + \rho+ (1-\gamma - \epsilon(1-\gamma+\rho))(1-2\rho)}, \\
		  p  &= \frac{1}{1- (\theta-2\epsilon) \rho}  = \frac{1 - \gamma + \rho}{1 - \gamma + \rho- ( 1 - \gamma - 2 \epsilon(1-\gamma + \rho)) \rho}.
\end{split}
\end{equation*}
Recall that $p < \bar{p}< q$. We apply a series of real interpolation arguments below. We use \cite[Corollary 3.8.2]{bergh2012interpolation} in the first and last embedding, 
\cite[Theorem 1.18.4]{Triebel78Interpolation} in all the three equalities, \cite[Theorem 3.4.1 (b)]{bergh2012interpolation} in the second embedding to obtain
\begin{equation} \label{Embeddings_time_regularity_Averaging_lemmata}
	\begin{split}
&	\left(L^{1}(\Omega ; L^2([-1,T+1]); L^1(\mathbb{T}^d)), L^{\beta}(\Omega; L^{\beta}([-1,T+1]); L^{\beta}(\mathbb{T}^d) ) \right)_{\theta, \infty} \\ & \hookrightarrow 	\left(L^{1}(\Omega ; L^2([-1,T+1]); L^1(\mathbb{T}^d)), L^{\beta}(\Omega; L^{\beta}([-1,T+1]); L^{\beta}(\mathbb{T}^d) )\right)_{\theta - \epsilon, \bar{p}} \\ &  = L^{ \bar{p}} \left( \Omega;  ( L^2([-1,T+1]; L^1(\mathbb{T}^d)  , L^{\beta}([-1,T+1]; L^{\beta}(\mathbb{T}^d) ) )_{\theta - \epsilon, \bar{p}} \right) \\ & \hookrightarrow  L^{ \bar{p}}  \left( \Omega; (L^2([-1,T+1]; L^1(\mathbb{T}^d) , L^{\beta}([-1,T+1]; L^{\beta}(\mathbb{T}^d) ) )_{\theta - \epsilon, q} \right) \\ & =  L^{ \bar{p}} \left( \Omega;  L^{q} ( [-1,T+1] ;( L^1(\mathbb{T}^d)  , L^{\beta}(\mathbb{T}^d)  )_{\theta - \epsilon, q} ) \right) \\ & \hookrightarrow  L^{ \bar{p}} \left( \Omega; L^{q} ( [-1,T+1] ; ( L^1(\mathbb{T}^d)  , L^{\beta}(\mathbb{T}^d)  )_{\theta - 2 \epsilon, p} ) \right)
	 \\ & =  L^{ \bar{p}} \left( \Omega; L^{q} ( [-1,T+1] ;  L^p(\mathbb{T}^d) ) \right).
	\end{split}
\end{equation}
Using the embeddings \eqref{Embeddings_time_regularity_Averaging_lemmata} in \eqref{z_small_time_reg_small_v_after}, we have
\begin{equation} \label{switch_inequality_time_reg_small_v} 
	\| \bar{\chi}_{l} \|_{  L^{ \bar{p}} ( \Omega; L^{q} ( [-1,T+1] ;  L^p(\mathbb{T}^d) ))}   \lesssim 2^{-l \tilde{\kappa}_t}  \left(  \mathcal{K}_{(i)} + \| \chi   \|_{ L^{\beta}_{\omega,t,x,v}}  \right).
\end{equation}
Using the fact that $\bar{\chi}$ is compactly supported in $(0,T)$, Young's convolution inequality and the fact that $\eta$ is a Schwartz function, we have

\begin{equation}  \label{switch_inequality_time_reg_small_v_large_time_Part_i}
	\begin{split}
	 \| \bar{\chi}_l \|_{L^{ \bar{p}}(\Omega;  L^{q}( \mathbb{R} \setminus [-1,T+1];    L^p(\mathbb{T}^d) ))}
	 & =  \left \| \int_r \eta_{l}(r) \bar{\chi}(\cdot-r,\cdot) \ dr \right \|_{L^{ \bar{p}}(\Omega;  L^{q}( \mathbb{R} \setminus [-1,T+1];    L^p(\mathbb{T}^d) ))}
	 \\ & = \left \| \int_r \eta_{l}(r) \mathbbm{1}_{(B_1(0))^c}(r) \ \bar{\chi}(\cdot-r,\cdot) \ dr \right \|_{L^{ \bar{p}}(\Omega;  L^{q}( \mathbb{R} \setminus [-1,T+1];    L^p(\mathbb{T}^d) ))}
	 	 \\ & \le \left \| \int_r \eta_{l}(r) \mathbbm{1}_{(B_1(0))^c}(r) \ \bar{\chi}(\cdot-r,\cdot) \ dr \right \|_{L^{ \bar{p}}(\Omega;  L^{q}( \mathbb{R} ;    L^p(\mathbb{T}^d) ))}
	 \\ & \le  \| \eta_l \mathbbm{1}_{(B_1(0))^c} \|_{L^{1}( \mathbb{R})}	 \| \bar{\chi} \|_{L^{ \bar{p}}(\Omega;  L^{q}( \mathbb{R} ;    L^p(\mathbb{T}^d) ))}
	 \\ & = \int_{\mathbb{R}} |\eta(t)| \mathbbm{1}_{|t| \ge 2^l} \ dt \	 \| \bar{\chi} \|_{L^{ \bar{p}}(\Omega;  L^{q}( \mathbb{R} ;    L^p(\mathbb{T}^d) ))}
	 \\ & \lesssim 2^{-l} \| \bar{\chi} \|_{L^{ \bar{p}}(\Omega;  L^{q}( \mathbb{R};    L^p(\mathbb{T}^d) ))}.
	 \end{split}
\end{equation}
Using  \eqref{switch_inequality_time_reg_small_v} and \eqref{switch_inequality_time_reg_small_v_large_time_Part_i}, we have for $l>0$
\begin{equation} \label{switch_inequality_time_reg_small_v_all_time_part_i}
	\begin{split}
		\| \bar{\chi}_l \|_{L^{ \bar{p}}(\Omega;  L^{q}(\mathbb{R};    L^p(\mathbb{T}^d) ))} 
		&  \le  \| \bar{\chi}_l \|_{L^{ \bar{p}}(\Omega;  L^{q}([-1,T+1];    L^p(\mathbb{T}^d) ))}  +  \| \bar{\chi}_l \|_{L^{ \bar{p}}(\Omega;  L^{q}( \mathbb{R} \setminus [-1,T+1];    L^p(\mathbb{T}^d) ))}
		\\ & \lesssim   2^{-l \tilde{\kappa}_t}  \left(  \mathcal{K}_{(i)} + \| \chi   \|_{ L^{\beta}_{\omega,t,x,v}}  \right) 
		 +  2^{-l} \| \bar{\chi} \|_{L^{ \bar{p}}_{\omega} L^{q}_t L^p_x}
		 \\ & \le  2^{-l \tilde{\kappa}_t} \left(  \mathcal{K}_{(i)} + \| \chi   \|_{ L^{\beta}_{\omega,t,x,v}} +  \| \bar{\chi} \|_{L^{ \bar{p}}_{\omega} L^{q}_t L^p_x}  \right).
	\end{split}
\end{equation}
We then continue along the same lines as in the proof of Lemma \ref{Isotropic_Averaging_Lemma}. Namely, we  multiply the left-and right-hand side of \eqref{switch_inequality_time_reg_small_v_all_time_part_i} by $  2^{l \tilde{\kappa}_t}   $, take the supremum over $l >  0$ to arrive at
	\begin{gather*} 
	\sup_{l > 0}     2^{l \tilde{\kappa}_t}    \| \bar{\chi}_{l}  \|_{ L^{ \bar{p}}(\Omega;  L^{q}(\mathbb{R};    L^p(\mathbb{T}^d) ))}  \lesssim     \mathcal{K}_{(i)} + \| \chi   \|_{ L^{\beta}_{\omega,t,x,v}} +  \| \bar{\chi} \|_{L^{ \bar{p}}_{\omega} L^{q}_t L^p_x}. 
\end{gather*}
Let $\kappa_t := \tilde{\kappa}_t - \epsilon$.  Similarly as done in \eqref{Losing_epsilon_space_regularity_interpolation_step}, we have
\begin{equation} \label{losing_epsilon_time_explanation}
	\begin{split}
\left\Vert \sup_{l > 0} 2^{l \kappa_t}  \| \bar{\chi}_{l}  \|_{ L^{q}(\mathbb{R};    L^p(\mathbb{T}^d) )} \right\Vert_{L^{\bar{p}}(\Omega)}   & \lesssim 	\sup_{l > 0}     2^{l \tilde{\kappa}_t}    \| \bar{\chi}_{l}  \|_{L^{ \bar{p}}(\Omega;  L^{q}(\mathbb{R};    L^p(\mathbb{T}^d) ))} \\ &  \lesssim     \mathcal{K}_{(i)} + \| \chi   \|_{ L^{\beta}_{\omega,t,x,v}} +  \| \bar{\chi} \|_{L^{ \bar{p}}_{\omega} L^{q}_t L^p_x}.
\end{split}
\end{equation}	
For $l=0$, we use 
\begin{equation} \label{case_j0_l0_time}
   \|   \bar{\chi}_{l}   \|_{L^{ \bar{p}}(\Omega;  L^{q}(\mathbb{R};    L^p(\mathbb{T}^d) ))} \lesssim     \|   \bar{\chi}   \|_{L^{ \bar{p}}(\Omega;  L^{q}(\mathbb{R};    L^p(\mathbb{T}^d) ))}. 
\end{equation}
Using \eqref{losing_epsilon_time_explanation} and \eqref{case_j0_l0_time}, we have 
		\begin{equation} \label{final_cut_off_time}
	\begin{split}
	\| \bar{\chi}   \|_{L^{\bar{p}}(\Omega; B^{\kappa_t}_{q, \infty}(\mathbb{R}; L^p(\mathbb{T}^d) ))}  & = \left\Vert \sup_{l \ge 0}  2^{l \kappa_t}  \| \bar{\chi}_{l}  \|_{L^{q}(\mathbb{R};    L^p(\mathbb{T}^d) )} \right\Vert_{L^{\bar{p}}(\Omega)}  \\ &  \lesssim    \mathcal{K}_{(i)} + \| \chi   \|_{ L^{\beta}_{\omega,t,x,v}}   +  \| \bar{\chi} \|_{L^{ \bar{p}}_{\omega} L^{q}_t L^p_x}. 
	\end{split}  
\end{equation} 

We remove the assumption that $ \chi$ is localized in $v$ similarly as done in \cite[Proof of Lemma 4.2]{gess2019optimal}. Let $ \Upsilon$, $\chi^{\Upsilon}$, $\mathring{g}^{\Upsilon}_k$, $h^{\Upsilon}$, $ \tilde{h}^{\Upsilon^{'}}$ and $ \tilde{h}^{\Upsilon}$ be as in the proof of Lemma \ref{Isotropic_Averaging_Lemma}. Then $ \chi^{\Upsilon}$ is a solution to \eqref{localization_v_space} and  the estimate \eqref{final_cut_off_time} reads as follows:
\begin{equation} \label{Smooth_cutoff_time_reg_large_v}
\begin{split}
 \| \bar{\chi}^{\Upsilon} \|_{L^{\bar{p}}(\Omega; B^{\kappa_t}_{q, \infty}(\mathbb{R}; L^p(\mathbb{T}^d) ))}  & \lesssim        \left( \sum_{k=1}^{\infty} \|     \tilde{u}^{ 1- \gamma}   \mathring{g}_k^{\Upsilon}    \|_{L^2_{\omega,t,x}}^2 \right)^{\frac{1}{2}}    \\ &  +    \left( \sum_{k=1}^{\infty}    \esssup_{t \in [0,T]}   \left \|   (\mathring{g}^{\Upsilon}_k)^2(t,\cdot,\tilde{u})  \right \|_{L^1_{\omega,x}}^{\frac{1}{\varpi}}       \left \|   \tilde{u}^{2 \varkappa(1- \gamma)}  (\mathring{g}^{\Upsilon}_k)^2   \right \|_{L^1_{\omega,t,x}}^{\frac{1}{\varkappa}}  \right)^{\frac{1}{2}}    \\ &  +  \| \abs{v}^{1- \gamma} ( h^{\Upsilon} - \tilde{h}^{\Upsilon^{'}}) \|_{L^1_{\omega} \mathcal{M}_{\text{TV}}}    + \| \abs{v}^{- \gamma} \tilde{h}^{\Upsilon}\|_{L^1_{\omega} \mathcal{M}_{\text{TV}}} \\ &   + \| \chi^{\Upsilon}  \|_{ L^{\beta}_{\omega,t,x,v}} + \| \bar{\chi}^{\Upsilon} \|_{L^{ \bar{p}}_{\omega} L^{q}_t L^p_x}. 
\end{split}
		\end{equation}
Let $\Upsilon_n$ and  $\Upsilon_n^{'}$ be as in the proof of Lemma \ref{Isotropic_Averaging_Lemma}. For the first term in the right-hand side of \eqref{Smooth_cutoff_time_reg_large_v}, we have  
		\begin{equation*}
		\begin{split}
& \Bigg(  \sum_{k=1}^{\infty} \mathbb{E} \int_{t,x}    \abs{\tilde{u}(t,x)}^{2( 1-\gamma)} \abs{\mathring{g}^{\Upsilon_n}_k(t,x,\tilde{u}(t,x))}^2       \ dx      \ dt \Bigg)^{\frac{1}{2}} \\ &  \lesssim \left( \sum_{k=1}^{\infty}  \mathbb{E} \int_{t,x}      \mathbbm{1}_{  i_n \le  \abs{\tilde{u}} } \abs{\tilde{u}(t,x)}^{2( 1-\gamma)} \abs{\mathring{g}_k(t,x,\tilde{u}(t,x))}^2         \ dt  \ dx   \right)^{\frac{1}{2}}.
			\end{split}
	\end{equation*}
The remaining terms can be treated as in the proof of Lemma \ref{Isotropic_Averaging_Lemma}. 
		We may then take the limit $ n \rightarrow \infty$ in \eqref{Smooth_cutoff_time_reg_large_v} and use Fatou's lemma to obtain \eqref{final_cut_off_time} also for general $ \chi$. 
		
		\paragraph{Part \textit{(ii)}} The proof is similar to the one of Part \textit{(i)}. The main difference is that here we do not introduce a $\delta$-dependent cut-off in $v$ since $ \chi$ is supported in $ \abs{v} \ge 1$. 
		
		As in the proof of Part  \textit{(i)}, we first assume that $ \chi$ is compactly supported in $v$ and then remove this qualitative assumption at the end of the proof. 

		 Then $\chi$ solves, in the sense of  \eqref{notion_solution_kinetic_torus},
		 \begin{equation*}
		 	\begin{split}
		 		\mathcal{L}(\partial_t, \nabla_x,v) \chi & =   \sum_{k=1}^{\infty}  \delta_{\tilde{u}=v }   \mathring{g}_k \dot{\beta}_k + h +  \partial_v \tilde{h}.
		 	\end{split}
		 \end{equation*}
Thus,
	\begin{equation} \label{non_degenerate_decomposition_starting_part_ii}
			\begin{split}
\chi(t,x,v) & = \mathcal{F}_{t,x}^{-1}  \frac{1}{ \mathcal{L}(i\tau, \xi,v)} \mathcal{F}_{t,x} \left(  \sum_{k=1}^{\infty}  \delta_{\tilde{u}=v }   \mathring{g}_k \dot{\beta}_k \right)
\\ & + \mathcal{F}_{t,x}^{-1}  \frac{1}{ \mathcal{L}(i\tau, \xi,v)} \mathcal{F}_{t,x}  h(t,x,v)   + \mathcal{F}_{t,x}^{-1}  \frac{1}{ \mathcal{L}(i\tau, \xi,v)} \mathcal{F}_{t,x}  \partial_v \tilde{h}(t,x,v) \\  & =: \chi^{1}(t,x,v)  + \chi^{2}(t,x,v)  + \chi^{3}(t,x,v).
			\end{split}
		\end{equation}
Let $\eta_{l}$ be as in Part \textit{(i)}. The Littlewood--Paley block for $\chi$ is defined as follows
\begin{equation} \label{Littlewood_Paley_block_part_2_time}
	\begin{split}
		\chi_{l}(t,x,v) & = \int_r \eta_{l}(r) \chi(t-r,x,v)  \ dr.
			\end{split}
	\end{equation}
The blocks for $ \chi^{1} $, $ \chi^{2} $ and $\chi^{3} $ are defined as follows
\begin{equation*}
	\begin{split}
\chi_{l}^1(t,x,v) &=	 \int_r \eta_{l}(r) \chi^{1}(t-r,x,v)  \ dr,
\\ \chi^{2}_l(t,x,v)  &=		 \mathcal{F}_{t,x}^{-1} \hat{\eta}_{l}    \frac{ 1 }{ \mathcal{L}(i\tau, \xi,v) }  \mathcal{F}_{t,x}  h(t,x,v),   \\   \chi^{3}_l(t,x,v) & = \mathcal{F}_{t,x}^{-1}  \hat{\eta}_{l}  \frac{ 1 }{ \mathcal{L}(i\tau, \xi,v) }  \mathcal{F}_{t,x} \partial_v  \tilde{h}(t,x,v),
	\end{split}
\end{equation*}
respectively.

We can decompose \eqref{Littlewood_Paley_block_part_2_time} as follows
\begin{equation*}
		\begin{split}
		\chi_{l}(t,x,v)  & = \int_r \eta_{l}(r) (\chi^{1}(t-r,x,v)  - \chi^{1}(t,x,v))  \ dr \\ &   + \chi^{2}_l(t,x,v) + \chi^{3}_l(t,x,v) . 
	\end{split}
\end{equation*}

We have obtained the following decomposition
\begin{equation} \label{decomposition_time_v_large}
	\begin{split}
		\bar{\chi}_l(t,x)   & =  \int_v \chi_l(t,x,v) \ dv \\    & = \int_{v,r} \eta_{l}(r) (\chi^{1}(t-r,x,v)  - \chi^{1}(t,x,v))   \ dr \ dv \\ &   + \int_v \chi^{2}_l(t,x,v)  \ dv + \int_v \chi^{3}_l(t,x,v) \ dv .  
	\end{split}
\end{equation}
 
 We proceed in dividing this part of the proof into four steps. In the first three we estimate the velocity averages \eqref{decomposition_time_v_large} and in the final step we combine all the estimates.  We introduce two different arguments from Part \textit{(i)}. In Step 3(ii) below, we derive an estimate for the term containing $\tilde{h}$ suitable for $\tilde{\gamma}>1$. In Step 4(ii) below, we combine the estimates obtained in the previous steps without using the real interpolation method. The estimates in Step 1(ii) and 2(ii) are identical to those in Step 2(i) and 3(i) respectively, with the difference that here the estimates are not $\delta$-dependent.
 
\subparagraph{Step 1(ii)} Let $l> 0 $ be arbitrary and fixed. Observe that $\chi^{1}$ in \eqref{non_degenerate_decomposition_starting_part_ii} is given by
\begin{equation*} 
	\begin{split}
		\chi^{1}(t,x,v)= \sum_{k=1}^{\infty} \int_0^t \int_{\mathbb{T}^d} \Phi(a(v)(t-s),x-y)  \delta_{\tilde{u}(s,y)=v }   \mathring{g}_k(s,y,v) \ dy \  d\beta_k(s).
	\end{split}
\end{equation*}
 Using the same arguments as in Step 2(i), we obtain
\begin{equation} \label{Final_step1_time_reg_large_v}
	\begin{split}
		& \left\Vert    \int_{v,r} \eta_{l}(r) (\chi^{1,1}(\cdot-r,\cdot,v)  - \chi^{1,1}(\cdot,\cdot,v))   \ dr  \  dv \right\Vert_{L^1(\Omega;  L^2(([-1,T+1]) ;  L^1(\mathbb{T}^d)))}  \\ &  \lesssim   2^{-l \zeta  } \Bigg[ \left( \sum_{k=1}^{\infty} \|      \tilde{u}^{ \nu -\frac{1}{2}}   \mathring{g}_k    \|_{L^2_{\omega,t,x}}^2 \right)^{\frac{1}{2}}     +      \left( \sum_{k=1}^{\infty}       \esssup_{t \in [0,T]}    \left \|    \mathring{g}_k^2(t,\cdot,\tilde{u})  \right \|_{L^1_{\omega,x}}^{\frac{1}{\varpi}}      \left \|    \tilde{u}^{\varkappa(2 \nu - 1)}   \mathring{g}_k^2  \right \|_{L^1_{\omega,t,x}}^{\frac{1}{\varkappa}} \right)^{\frac{1}{2}}   \Bigg].
	\end{split}
\end{equation}
\subparagraph{Step 2(ii)}  Let $l > 0 $ be arbitrary and fixed. Using the same arguments as in Step 3(i), we obtain
\begin{equation} \label{Estimate_cut_time_reg_large_v}
	\begin{split}
		\left\Vert  \int_{v} \chi^{1,2}_l \ dv  \right\Vert_{L^1(\Omega; L^2(\mathbb{R};  L^1(\mathbb{T}^d)))} & \lesssim 2^{\frac{l}{2}}			\left\Vert  \int_{v} \chi^{2}_l \ dv  \right\Vert_{L^1(\Omega \times  \mathbb{R} \times  \mathbb{T}^d)} \\ &  = 2^{\frac{l}{2}}	 \left\Vert   \int_{v}  \mathcal{F}_{t,x}^{-1}    \hat{\eta}_{l}  \frac{ 1}{ \mathcal{L}(i\tau, \xi,v) }  \mathcal{F}_{t,x}  h    \ dv \right\Vert_{L^1(\Omega \times  \mathbb{R} \times  \mathbb{T}^d)}  \\ 
		&	\lesssim 2^{- \frac{l}{2}}  \|    h  \|_{L^1_{\omega} \mathcal{M}_{\text{TV}}}.
	\end{split}
\end{equation}
\subparagraph{Step 3(ii)} Let $l > 0 $ be arbitrary and fixed. We assume without loss of generality that $ \abs{\xi} \ge 1$ (since $x \in \mathbb{T}^d$). We have
\begin{equation} \label{Expression_kinetic_time_large_v}
	\begin{split}
		\int_{v}  \chi^{3}_l  \ dv & =  m (m -1)  \int_{v}  \mathcal{F}_{t,x}^{-1} \hat{\eta}_{l}   \frac{4 \pi^2  \abs{v}^{m-2 +  \tilde{\gamma}} \sgn(v) \abs{\xi}^2}{ \mathcal{L}(i\tau, \xi,v)^2} \mathcal{F}_{t,x}    \abs{v}^{-  \tilde{\gamma}}  \tilde{h}  \ dv. 
	\end{split}
\end{equation}
Note that $ \abs{\tau} \sim 2^l$  on the support of $ \hat{\eta}_{l}(\tau) $ and $ \abs{v} \ge 1$ by assumption.  Recall that $\bar{\epsilon} \in (0,1)$ such that $   \tilde{\gamma} < 1 + \bar{\epsilon} (m-1) $. Then
\begin{equation*}
	\begin{split}
		\frac{ 4 \pi^2  \abs{v}^{m-2 +  \tilde{\gamma}} \abs{\xi}^2}{ \abs{\mathcal{L}(i\tau, \xi,v)}^2} \lesssim \frac{\abs{v}^{ \tilde{\gamma}-1} }{ \abs{\tau}^{1- \bar{\epsilon}}  \abs{v}^{\bar{\epsilon}(m-1)} \abs{\xi}^{2 \bar{\epsilon}}}  \lesssim \abs{v}^{ \tilde{\gamma} -1- \bar{\epsilon}(m-1) }  \abs{\tau}^{-(1- \bar{\epsilon})} \lesssim  2^{-l(1- \bar{\epsilon})} . 
	\end{split}
\end{equation*}
Using Bernstein's lemma, \eqref{Expression_kinetic_time_large_v} and the same arguments of \cite [Proof of Lemma 4.4 Step 3]{gess2019optimal}, we obtain 
\begin{equation} \label{Kinetic_First_Averaging_large_v}
	\begin{split}
		\left\Vert \int_{v} \chi^{3}_l \ dv \right\Vert_{L^1(\Omega; L^2(\mathbb{R};  L^1(\mathbb{T}^d)))}  & \lesssim 	2^{\frac{l}{2}}	\left\Vert \int_{v} \chi^{3}_l \ dv \right\Vert_{L^1(\Omega \times  \mathbb{R} \times  \mathbb{T}^d)}  \\ & \lesssim  		2^{\frac{l}{2}} 2^{-l(1- \bar{\epsilon})}  \| \abs{v}^{-  \tilde{\gamma}} \tilde{h}  \|_{L^1_{\omega} \mathcal{M}_{\text{TV}}}  \\ & =  2^{- l  (\frac{1}{2} - \bar{\epsilon}) } \| \abs{v}^{-  \tilde{\gamma}} \tilde{h}  \|_{L^1_{\omega} \mathcal{M}_{\text{TV}}}.
	\end{split} 
\end{equation}
\subparagraph{Step 4(ii)} We combine the estimates derived in the previous steps. We denote 
\begin{equation*}
	\begin{split}
		\mathcal{K}_{(ii)} & =    \left( \sum_{k=1}^{\infty} \|      \tilde{u}^{ \nu -\frac{1}{2}}   \mathring{g}_k    \|_{L^2_{\omega,t,x}}^2 \right)^{\frac{1}{2}}    +     \left( \sum_{k=1}^{\infty}       \esssup_{t \in [0,T]}    \left \|    \mathring{g}_k^2(t,\cdot,\tilde{u})  \right \|_{L^1_{\omega,x}}^{\frac{1}{\varpi}}     \left \|  \tilde{u}^{\varkappa(2 \nu - 1)}   \mathring{g}_k^2   \right \|_{L^1_{\omega,t,x}}^{\frac{1}{\varkappa}} \right)^{\frac{1}{2}}  \\ &   + \|    h  \|_{L^1_{\omega} \mathcal{M}_{\text{TV}}}  + \| \abs{v}^{-  \tilde{\gamma}} \tilde{h}  \|_{L^1_{\omega} \mathcal{M}_{\text{TV}}}.
	\end{split}
\end{equation*}
By the estimates  \eqref{Final_step1_time_reg_large_v}, \eqref{Estimate_cut_time_reg_large_v} and  \eqref{Kinetic_First_Averaging_large_v}, we have
\begin{equation} \label{K_functional_bootstrap_time_reg_large_v}
	\left\Vert \bar{\chi}_{l} \right\Vert_{L^{ 1}(\Omega;  L^{2}([-1,T+1];    L^1(\mathbb{T}^d) ))}   \lesssim       2^{-l 	\zeta } \mathcal{K}_{(ii)} . 
\end{equation}
Using the fact that $\bar{\chi}$ is compactly supported in $(0,T)$, Young's convolution inequality and the fact that $\eta$ is a Schwartz function, we have
\begin{equation}  \label{switch_inequality_time_reg_small_v_large_time_Part_ii}
	\begin{split}
		\| \bar{\chi}_l \|_{L^{ 1}(\Omega;  L^{2}( \mathbb{R} \setminus [-1,T+1];    L^1(\mathbb{T}^d) ))}
		& =  \left \| \int_r \eta_{l}(r) \bar{\chi}(\cdot-r,\cdot) \ dr \right \|_{L^{ 1}(\Omega;  L^{2}( \mathbb{R} \setminus [-1,T+1];    L^1(\mathbb{T}^d) ))}
		\\ & = \left \| \int_r \eta_{l}(r) \mathbbm{1}_{(B_1(0))^c}(r) \ \bar{\chi}(\cdot-r,\cdot) \ dr \right \|_{L^{ 1}(\Omega;  L^{2}( \mathbb{R} \setminus [-1,T+1];    L^1(\mathbb{T}^d) ))}
		\\ & \le \left \| \int_r \eta_{l}(r) \mathbbm{1}_{(B_1(0))^c}(r) \ \bar{\chi}(\cdot-r,\cdot) \ dr \right \|_{L^{ 1}(\Omega;  L^{2}( \mathbb{R} ;    L^1(\mathbb{T}^d) ))}
		\\ & \le  \| \eta_l \mathbbm{1}_{(B_1(0))^c} \|_{L^{1}( \mathbb{R})}	 \| \bar{\chi} \|_{L^{ 1}(\Omega;  L^{2}( \mathbb{R} ;    L^1(\mathbb{T}^d) ))}
		 \\ & = \int_{\mathbb{R}} |\eta(t)| \mathbbm{1}_{|t| \ge 2^l} \ dt \	 \| \bar{\chi} \|_{L^{ 1}(\Omega;  L^{2}( \mathbb{R} ;    L^1(\mathbb{T}^d) ))}
		\\ & \lesssim 2^{-l} \| \bar{\chi} \|_{L^{1}(\Omega;  L^{2}( \mathbb{R};    L^1(\mathbb{T}^d) ))}.
	\end{split}
\end{equation}
Using  \eqref{K_functional_bootstrap_time_reg_large_v} and \eqref{switch_inequality_time_reg_small_v_large_time_Part_ii}, we have for $l>0$
\begin{equation} \label{switch_inequality_time_reg_small_v_all_time_part_ii}
	\begin{split}
		\| \bar{\chi}_l \|_{L^{ 1}(\Omega;  L^{2}(\mathbb{R};    L^1(\mathbb{T}^d) ))}
		 & \le  \| \bar{\chi}_l \|_{L^{ 1}(\Omega;  L^{2}([-1,T+1];    L^1(\mathbb{T}^d) ))}   +  \| \bar{\chi}_l \|_{L^{1}(\Omega;  L^{2}( \mathbb{R} \setminus [-1,T+1];    L^1(\mathbb{T}^d) ))}.
		\\ & \lesssim   2^{-l \zeta}   \mathcal{K}_{(ii)}   
		+  2^{-l} \| \bar{\chi} \|_{L^{ 1}_{\omega} L^{2}_t L^1_x}
		\\ & \le  2^{-l \zeta} \left(  \mathcal{K}_{(ii)} +  \| \bar{\chi} \|_{L^{ 1}_{\omega} L^{2}_t L^1_x}  \right).
	\end{split}
\end{equation}
We then proceed as in Part \textit{(i)}. Namely, we  multiply the left-and right-hand side of \eqref{switch_inequality_time_reg_small_v_all_time_part_ii} by $  2^{l \zeta}   $ and take the supremum over $l >  0$
\begin{equation*}
	\begin{split}
		\left\Vert \sup_{l > 0} 2^{l (\zeta - \epsilon)}  \| \bar{\chi}_{l}  \|_{ L^{2}(\mathbb{R};    L^1(\mathbb{T}^d) )} \right\Vert_{L^{1}(\Omega)}    \lesssim 	\sup_{l > 0}     2^{l \zeta}    \| \bar{\chi}_{l}  \|_{L^{ 1}(\Omega;  L^{2}(\mathbb{R};    L^1(\mathbb{T}^d) ))}  \lesssim     \mathcal{K}_{(ii)}  +  \| \bar{\chi} \|_{L^{ 1}_{\omega} L^{2}_t L^1_x}.
	\end{split}
\end{equation*}	
For $l=0$, we use
\begin{equation*} 
  \|   \bar{\chi}_{l}   \|_{L^{ 1}(\Omega;  L^{2}(\mathbb{R};    L^1(\mathbb{T}^d) ))} \lesssim \|   \bar{\chi}  \|_{L^{ 1}(\Omega;  L^{2}(\mathbb{R};    L^1(\mathbb{T}^d) ))}. 
\end{equation*}
 The two estimates above lead to 
		\begin{equation} \label{final_cut_off_time_large}
	\begin{split}
			\| \bar{\chi}   \|_{L^{1}(\Omega; B^{\zeta - \epsilon}_{2, \infty}(\mathbb{R}; L^1(\mathbb{T}^d) ))}  = \left \| \sup_{l \ge 0}  2^{l (\zeta - \epsilon)}  \| \bar{\chi}_{l} \|_{L^{2}(\mathbb{R}; L^1(\mathbb{T}^d))} \right \|_{L^{1}(\Omega)}   \lesssim    \mathcal{K}_{(ii)} +   \| \bar{\chi} \|_{L^{ 1}_{\omega} L^{2}_t L^1_x}.
	\end{split}  
\end{equation} 
The removal of the assumption that $ \chi$ has compact support in $v$ follows along the same lines of Part \textit{(i)}.
\end{proof}
	\section{Application to Stochastic Porous Medium Equations} \label{Application}
	In this section, we provide the proofs of our main results (Theorem \ref{Main_theorem} and Theorem \ref{Main_theorem_time}) by applying the averaging lemmata obtained
	in the previous section to kinetic solutions of \eqref{degenerate_parabolic_hyperbolic_SPDE}. Before that, we establish an a \textit{priori} bound on $u$, an a \textit{priori} bound on the singular moments of the kinetic measure and a version of the Poincar{\'e} inequality.


	\begin{lem} \label{control_apriori_estimate_new}
	Let $p \in [1, \infty)$, $ \alpha \in [\frac{1}{2},1]$ and $u$ be a solution to \eqref{degenerate_parabolic_hyperbolic_SPDE} in the sense of Definition \ref{kinetic_solution} with $u_0 \in L^{2 \alpha}(\mathbb{T}^d)$. Then, there is a constant $C=C(T,\alpha, D,p) > 0$ such that
	\begin{equation} \label{Estimate_L1_energy}
		\mathbb{E} \esssup_{t \in[0, T]} \| u(t) \|_{L^{2 \alpha}}^{2 \alpha p}  \le C (  \|u_0 \|_{L^{2 \alpha}}^{2 \alpha p} +1) .
	\end{equation}
\end{lem}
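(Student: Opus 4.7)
The plan is to apply It\^{o}'s formula to the functional $u\mapsto \int_{\mathbb{T}^d}|u|^{2\alpha}\,dx$ along the SPDE \eqref{degenerate_parabolic_hyperbolic_SPDE} and then combine the resulting identity with BDG and (stochastic) Gr\"onwall. Since $v\mapsto |v|^{2\alpha}$ is only $C^{1}$ (and in the borderline case $\alpha=1/2$ merely Lipschitz), the formula cannot be applied directly; I would either quote the standard version available for kinetic solutions (cf.\ \cite{gess2021fehrman,dareiotis2019entropy}) or regularize by $\varphi_\varepsilon(v)=(v^2+\varepsilon)^{\alpha}$ and pass to the limit $\varepsilon\downarrow 0$, exactly as is done for the singular-moment computation written out in the Remark following Theorem \ref{Main_theorem_time}. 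The result, after an integration by parts that turns the Laplacian term into a non-negative dissipation, reads schematically
\begin{equation*}
\|u(t)\|_{L^{2\alpha}}^{2\alpha}
 + c_{m,\alpha}\!\int_0^t\!\!\int_{\mathbb{T}^d}\! \bigl|\nabla u^{[\frac{m+2\alpha-1}{2}]}\bigr|^2\,dx\,ds
= \|u_0\|_{L^{2\alpha}}^{2\alpha} + M_t + I_t,
\end{equation*}
where $M_t$ is the martingale $2\alpha\sum_k\int_0^t\!\int |u|^{2\alpha-1}\sgn(u)g_k(x,u)\,dx\,d\beta_k$ and $I_t$ is the It\^{o} correction proportional to $\sum_k\int_0^t\!\int |u|^{2\alpha-2}g_k^2(x,u)\,dx\,ds$.

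Next I would control the It\^{o} correction $I_t$ using Assumption \ref{Assumption_diffusion_coefficient}. Splitting the $x$-integral into the regions $\{|u|\le 1\}$ and $\{|u|\ge 1\}$ and using $|g_k|^2\le \lambda_k^2(|u|\mathbbm{1}_{|u|\le 1}+|u|^{2\alpha}\mathbbm{1}_{|u|\ge 1})$ together with $D=\sum_k\lambda_k^2<\infty$ gives the pointwise-in-time bound
\begin{equation*}
 |I_t|\ \lesssim\ D\int_0^t\bigl(1+\|u(s)\|_{L^{2\alpha}}^{2\alpha}\bigr)\,ds,
\end{equation*}
where the key algebraic check is that $2\alpha-2+2\alpha \le 2\alpha$ (so the large-$u$ part is controlled by $\|u\|_{L^{2\alpha}}^{2\alpha}$) and $2\alpha-1\ge 0$ (so the small-$u$ part is bounded on the torus by a constant). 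For the martingale $M_t$ I would apply BDG followed by Cauchy--Schwarz in $x$ and $k$, estimating the quadratic variation
\begin{equation*}
[M]_t=(2\alpha)^{2}\sum_k\!\int_0^t\!\Bigl(\int |u|^{2\alpha-1}g_k(x,u)\,dx\Bigr)^{\!2}ds \ \lesssim\ \int_0^t\|u(s)\|_{L^{2\alpha}}^{4\alpha}\,ds + 1,
\end{equation*}
using the same splitting as above; raising to the power $p/2$ and applying Young's inequality in the form $a^{p/2}b^{p/2}\le \tfrac{1}{2}a^p+\tfrac{1}{2}b^p$ allows one to absorb a fraction of $\mathbb{E}\esssup_{s\le t}\|u(s)\|_{L^{2\alpha}}^{2\alpha p}$ back into the left-hand side after taking $\esssup_{t\le T}$ and expectation.

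Finally, taking $p$-th powers, then $\esssup_{t\in[0,T]}$ and expectation in the It\^{o} identity, combining the BDG estimate with the bound on $I_t$, and applying Gr\"onwall's inequality to the function $t\mapsto\mathbb{E}\esssup_{s\in[0,t]}\|u(s)\|_{L^{2\alpha}}^{2\alpha p}$ yields \eqref{Estimate_L1_energy} with constant $C=C(T,\alpha,D,p)$. The main technical obstacle is the rigorous justification of the It\^{o} formula at the borderline $\alpha=\tfrac12$, since $v\mapsto|v|$ is not $C^2$ and the correction term formally involves $|u|^{-1}$; this is handled via the regularization $\varphi_\varepsilon$ together with the a priori control of $\int_0^T\!\int\sum_k g_k^2/|u|\,dx\,ds$ ensured by Assumption \ref{Assumption_diffusion_coefficient} (which forces $g_k^2/|u|\lesssim \lambda_k^2$ for $|u|\le 1$), a control that does not see the singularity at $u=0$.
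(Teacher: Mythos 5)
Your proposal is essentially the paper's argument: regularize the power function, produce an It\^o-type identity, control the martingale via BDG and the correction term via Assumption \ref{Assumption_diffusion_coefficient}, and close by absorption. Two technical points are worth flagging. First, the lemma is about \emph{kinetic} solutions in the sense of Definition \ref{kinetic_solution}, so one cannot literally apply It\^o's formula ``along the SPDE''; the paper instead plugs the test function $\varphi(t,x,v)=\zeta_l(t)\,\psi_\epsilon(v)$ with $\psi_\epsilon(v)=\Theta_\epsilon'(v)|v|^{2\alpha-1}$ into the weak kinetic formulation \eqref{kinetic_formulation_distribution_PME}. This is morally your It\^o step, but the admissibility of test functions forces a \emph{compactly supported} regularization. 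The truncation the paper uses, $\Theta_\epsilon''(u)=\epsilon^{-1}\mathbbm{1}_{|u|\le\epsilon}-\epsilon\mathbbm{1}_{1/\epsilon\le|u|\le 2/\epsilon}$, cuts off both at $0$ and at $\infty$; your $\varphi_\epsilon(v)=(v^2+\epsilon)^\alpha$ regularizes only at $0$ and does not have compact support in $v$, so it is not directly admissible — you would still need the outer truncation and a second limiting argument. Second, the paper does not invoke Gr\"onwall: after the BDG bound it absorbs the $\mathbb{E}\esssup\|u\|^{2\alpha p}$ term by taking $T$ small enough (inequality \eqref{T_small}) and then iterating over time intervals. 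Your Gr\"onwall route would also work once you set up a genuine integral inequality (e.g. by pulling the power $p/2$ inside the BDG estimate via H\"older), but it is a slightly different way to close the fixed-point. Your algebraic checks for the correction term ($4\alpha-2\le 2\alpha$, $2\alpha-1\ge 0$) and for the quadratic variation ($3\alpha-1\le 2\alpha$) match the paper's.
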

\begin{proof}
For fixed $\epsilon \in (0,1]$, let 
	\begin{equation*}
		\Theta_{\epsilon}^{''}(u) = 
		\epsilon^{-1}  \mathbbm{1}_{\abs{u} \le \epsilon} - \epsilon \mathbbm{1}_{\frac{1}{\epsilon} \le \abs{u} \le \frac{2}{\epsilon}}, \quad  	\Theta_{\epsilon}(u) = \int_{0}^u \int_{0}^r \Theta_{\epsilon}^{''}(s) \ ds \ dr.
	\end{equation*}
  Let
		\begin{equation*}
			\psi_{\epsilon}(v)= \Theta_{\epsilon}^{'}(v) \abs{v}^{2 \alpha -1} , \quad
			\Psi_{\epsilon}(v)= \int_0^v \psi_{\epsilon}(\tilde{v}) \ d \tilde{v}.
	\end{equation*}
	Fix $\bar{t} < T$ and let $ \zeta_l \in C^{\infty}([0,T))$ with compact support in $[0,\bar{t}]$ such that $\zeta_l=1$ in $[0,\bar{t}- \frac{1}{l}]$.
 An approximation argument shows that $\varphi(t,x,v) = \zeta_l(t) \psi_{\epsilon}(v) $ is an appropriate test function for the Definition \ref{kinetic_solution}, since it has compact support and bounded derivatives.  With this choice of $\varphi$ in \eqref{kinetic_formulation_distribution_PME}, we have
	\begin{equation*} 
		\begin{split}
			& \int_0^T \int_{\mathbb{T}^d}  \Psi_{\epsilon}(u(t,x)) \zeta^{'}_l(t)  \ dx \ dt     + 	 \int_{\mathbb{T}^d}  \Psi_{\epsilon}(u_0(x))  \ dx  \\  &  = - \sum_{k = 1}^{\infty} \int_0^T \int_{\mathbb{T}^d} g_k(x,u(t,x)) \zeta_l(t) \Theta_{\epsilon}^{'}(u(t,x)) \abs{u(t,x)}^{2 \alpha -1}   \ dx \  d\beta_k(t) 	
		\\	 & +  \int_0^T \int_{\mathbb{T}^d} \int_{\mathbb{R}} \zeta_l(t)   (\epsilon^{-1}  \mathbbm{1}_{\abs{v} \le \epsilon} - \epsilon \mathbbm{1}_{\frac{1}{\epsilon} \le \abs{v} \le \frac{2}{\epsilon}} ) \abs{v}^{2 \alpha -1}    \ dn(t,x,v)   
			\\ & + (2 \alpha -1) \int_0^T \int_{\mathbb{T}^d} \int_{\mathbb{R}} \zeta_l(t)   \Theta_{\epsilon}^{'}(v) \abs{v}^{2 \alpha -2} \sgn(v)  \ dn(t,x,v)   
				\end{split}
		\end{equation*} 
		\begin{equation} \label{First_expression_apriori_bound}
		\begin{split}
			  &  - \frac{1}{2 } \int_0^T \int_{\mathbb{T}^d} G^2(x,u(t,x))  \zeta_l(t) (\epsilon^{-1}  \mathbbm{1}_{\abs{u(t,x)} \le \epsilon} - \epsilon \mathbbm{1}_{\frac{1}{\epsilon} \le \abs{u(t,x)} \le \frac{2}{\epsilon}}) \abs{u(t,x)}^{2 \alpha -1}   \ dx \ dt \\  &  - \frac{(2 \alpha -1)}{2} \int_0^T \int_{\mathbb{T}^d} G^2(x,u(t,x))  \zeta_l(t)  \Theta_{\epsilon}^{'}(u(t,x)) \abs{u(t,x)}^{2 \alpha -2} \sgn(u(t,x))  \ dx \ dt. 
		\end{split}
	\end{equation} 
We may take the limit $l \rightarrow \infty$ in \eqref{First_expression_apriori_bound}. We use the Lebesgue differentiation theorem for the first term on the left-hand side of \eqref{First_expression_apriori_bound}, It{\^o} isometry and boundedness of $\Theta_{\epsilon}^{'}$ for the first term on the right-hand side of \eqref{First_expression_apriori_bound} and dominated convergence theorem for the remaining terms. For almost every $\omega \in \Omega$, there exists a set of full Lebesgue measure so that for all $\bar{t}$, the following is satisfied
	\begin{equation} \label{First_expression_apriori_bound_after_limit_in_time}
	\begin{split}
		&  \int_{\mathbb{T}^d}  \Psi_{\epsilon}(u(\bar{t},x))   \ dx      + 	 \int_{\mathbb{T}^d}  \Psi_{\epsilon}(u_0(x))  \ dx  \\  &  = - \sum_{k = 1}^{\infty}  \int_0^{\bar{t}} \int_{\mathbb{T}^d} g_k(x,u(t,x)) \Theta_{\epsilon}^{'}(u(t,x)) \abs{u(t,x)}^{2 \alpha -1}   \ dx \  d\beta_k(t) 				
	\\	& +  \int_0^{\bar{t}} \int_{\mathbb{T}^d} \int_{\mathbb{R}}    (\epsilon^{-1}  \mathbbm{1}_{\abs{v} \le \epsilon} - \epsilon \mathbbm{1}_{\frac{1}{\epsilon} \le \abs{v} \le \frac{2}{\epsilon}}) \abs{v}^{2 \alpha -1}    \ dn(t,x,v)   
	 \\ & + (2 \alpha -1) \int_0^{\bar{t}} \int_{\mathbb{T}^d} \int_{\mathbb{R}}    \Theta_{\epsilon}^{'}(v) \abs{v}^{2 \alpha -2} \sgn(v)  \ dn(t,x,v)    \\  &  - \frac{1}{2 } \int_0^{\bar{t}} \int_{\mathbb{T}^d} G^2(x,u(t,x))  ( \epsilon^{-1}  \mathbbm{1}_{\abs{u(t,x)} \le \epsilon} - \epsilon \mathbbm{1}_{\frac{1}{\epsilon} \le \abs{u(t,x)} \le \frac{2}{\epsilon}}) \abs{u(t,x)}^{2 \alpha -1}   \ dx \ dt \\  &  - \frac{(2 \alpha -1)}{2} \int_0^{\bar{t}} \int_{\mathbb{T}^d} G^2(x,u(t,x))   \Theta_{\epsilon}^{'}(u(t,x)) \abs{u(t,x)}^{2 \alpha -2} \sgn(u(t,x))  \ dx \ dt. 
	\end{split}
\end{equation} 
We may take $p^{\text{th}}$ power, the essential supremum over  $ \bar{t} \in [0,T]$ and the expectation in \eqref{First_expression_apriori_bound_after_limit_in_time} to obtain, for $\tilde{C}=\tilde{C}(p)>0$,
	\begin{equation*} 
		\begin{split}
			& \mathbb{E}  \esssup_{\bar{t} \in [0,T]} \left(\int_{\mathbb{T}^d}     \Psi_{\epsilon}(u(\bar{t},x))   \ dx \right)^p 
				\\ & +   \mathbb{E} \esssup_{\bar{t} \in [0,T]} \left( \int_0^{\bar{t}} \int_{\mathbb{T}^d} \int_{\mathbb{R}}  (	\epsilon^{-1}  \mathbbm{1}_{\abs{v} \le \epsilon} - \epsilon \mathbbm{1}_{\frac{1}{\epsilon} \le \abs{v} \le \frac{2}{\epsilon}} ) \abs{v}^{2 \alpha -1}  \ dn(t,x,v) \right)^p
			\\ & +  (2 \alpha -1)^p \ \mathbb{E} \esssup_{\bar{t} \in [0,T]} \left( \int_0^{\bar{t}} \int_{\mathbb{T}^d} \int_{\mathbb{R}}  \Theta_{\epsilon}^{'}(v) \abs{v}^{2 \alpha -2}  \sgn(v) \ dn(t,x,v)   \right)^p
			\\  &  \le \tilde{C}  \mathbb{E}  \esssup_{\bar{t} \in [0,T]}  \abs{ \sum_{k = 1}^{\infty} \int_0^{\bar{t}} \int_{\mathbb{T}^d} g_k(x,u(t,x))  \Theta_{\epsilon}^{'}(u(t,x)) \abs{u(t,x)}^{2 \alpha -1}    \ dx \  d\beta_k(t)}^p
			\\	& + \frac{1}{2^p } \mathbb{E}  \esssup_{\bar{t} \in [0,T]} \abs{ \int_0^{\bar{t}} \int_{\mathbb{T}^d} G^2(x,u(t,x))  (	\epsilon^{-1}  \mathbbm{1}_{\abs{u} \le \epsilon} - \epsilon \mathbbm{1}_{\frac{1}{\epsilon} \le \abs{u} \le \frac{2}{\epsilon}} )  \abs{u(t,x)}^{2 \alpha -1}      \ dx \ dt}^p 
						 		\end{split}
		\end{equation*}	
		 	\begin{equation} \label{A_priori_bound_2alpha}
		\begin{split}
		&  +  \frac{(2 \alpha -1)^p}{2^p} \mathbb{E}   \esssup_{\bar{t} \in [0,T]} \abs{\int_0^{\bar{t}} \int_{\mathbb{T}^d} G^2(x,u(t,x))   |\Theta_{\epsilon}^{'}(u(t,x))| \abs{u(t,x)}^{2 \alpha -2}  \ dx \ dt}^p 	\\ &	  + 	\left( \int_{\mathbb{T}^d}  \Psi_{\epsilon}(u_0(x))     \ dx \right)^p.
				\end{split}
		\end{equation}	
	Using Assumption \ref{Assumption_diffusion_coefficient}, we have
	\begin{equation*}
		\begin{split}
			&  \frac{1}{2^p } \mathbb{E}  \esssup_{\bar{t} \in [0,T]} \abs{ \int_0^{\bar{t}} \int_{\mathbb{T}^d} G^2(x,u(t,x))  (	\epsilon^{-1}  \mathbbm{1}_{\abs{u(t,x)} \le \epsilon} - \epsilon \mathbbm{1}_{\frac{1}{\epsilon} \le \abs{u(t,x)} \le \frac{2}{\epsilon}} )  \abs{u(t,x)}^{2 \alpha -1}      \ dx \ dt}^p
			 \\ & \le \frac{D^p}{2 } \Bigg( \mathbb{E}  \abs{ \int_0^{T} \int_{\mathbb{T}^d} 	 \abs{u(t,x)}^{2 \alpha } \epsilon^{-1} \mathbbm{1}_{\abs{u(t,x)} \le \epsilon}   \ dx \ dt}^p  \\ & +  \mathbb{E}   \abs{  \int_0^{T} \int_{\mathbb{T}^d}  \abs{u(t,x)}^{4 \alpha -1 } \epsilon  \mathbbm{1}_{\frac{1}{\epsilon} \le \abs{u(t,x)} \le \frac{2}{\epsilon}}        \ dx \ dt}^p \Bigg)
			 \\ & \le \frac{D^p}{2 }  \Bigg( \epsilon^{p(2 \alpha -1)} T^p + 2^{p(2 \alpha -1)} \epsilon^{2p(1-\alpha)} \mathbb{E}   \abs{  \int_0^{T} \int_{\mathbb{T}^d}  \abs{u(t,x)}^{2 \alpha }   \mathbbm{1}_{\frac{1}{\epsilon} \le \abs{u} \le \frac{2}{\epsilon}}      \ dx \ dt}^p  \Bigg) 
		\\	& \le \frac{D^p}{2 } T^p  \left( \epsilon^{p(2 \alpha -1)}  + 2^{p(2 \alpha -1)} \epsilon^{2p(1-\alpha)}  \mathbb{E}  \esssup_{t \in [0,T]} \| u(t) \|^{2 \alpha p}_{L^{2 \alpha}}   \right).
		\end{split}
	\end{equation*}
	Since $ 	\Theta_{\epsilon}^{'}(u(t,x)) \le 1$ and using again Assumption \ref{Assumption_diffusion_coefficient}, we have
	\begin{equation*}
		\begin{split}
			&	\frac{(2 \alpha -1)^p}{2^p} \mathbb{E}   \esssup_{\bar{t} \in [0,T]}  \abs{\int_0^{\bar{t}} \int_{\mathbb{T}^d} G^2(x,u(t,x))   |\Theta_{\epsilon}^{'}(u(t,x))| \abs{u(t,x)}^{2 \alpha -2}  \ dx \ dt}^p \\ & \le  	\frac{D^p(2 \alpha -1)^p}{2} \Bigg( \mathbb{E}  \abs{ \int_0^{T} \int_{\mathbb{T}^d}  \abs{u(t,x)}^{2 \alpha-1 }  \mathbbm{1}_{\abs{u} \le 1} \ dx \ dt }^p  \\ & +  \mathbb{E}   \abs{ \int_0^{T} \int_{\mathbb{T}^d}    \abs{u(t,x)}^{4 \alpha - 2} \mathbbm{1}_{\abs{u} \ge 1}    \ dx \ dt }^p \Bigg)
		\\	& \le  	\frac{D^p(2 \alpha -1)^p}{2} \left( T^p + \mathbb{E}  \abs{ \int_0^{T} \int_{\mathbb{T}^d} \abs{u(t,x)}^{2 \alpha} \mathbbm{1}_{\abs{u} \ge 1} \ dx \ dt }^p \right)
		\\	 & \le 	\frac{D^p(2 \alpha -1)^p}{2} T^p \left( 1 + \mathbb{E}  \esssup_{t \in [0,T]} \| u(t) \|^{2 \alpha p}_{L^{2 \alpha}}  \right).
		\end{split}		
	\end{equation*}
 We may estimate the stochastic integral using the Burkholder--Davis--Gundy inequality and Assumption \ref{Assumption_diffusion_coefficient} as follows  
	\begin{equation*} 
		\begin{split}
			&  \mathbb{E} \esssup_{\bar{t} \in [0,T]}  \abs{\sum_{k = 1}^{\infty} \int_0^{\bar{t}} \int_{\mathbb{T}^d} g_k(x,u(t,x))  \Theta_{\epsilon}^{'}(u(t,x)) \abs{u(t,x)}^{2 \alpha -1}  \ dx \  d\beta_k(t)}^p 
					\end{split}
		\end{equation*}
		\begin{equation*} 
		\begin{split}
			 & \le \tilde{C} \mathbb{E} \left(  \int_0^{T} \sum_{k=1}^{\infty} \abs{ \int_{\mathbb{T}^d} \abs{g_k(x,u(t,x))}  \abs{u(t,x)}^{2 \alpha -1}  \ dx   }^2 \ dt \right)^{\frac{p}{2}}
					\\ & \le \tilde{C} (2D)^{\frac{p}{2}} \mathbb{E} \left(  \int_0^{T} \left( \abs{ \int_{\mathbb{T}^d} \abs{u(t,x)}^{2 \alpha - \frac{1}{2}} \mathbbm{1}_{\abs{u} \le 1}  \ dx   }^2 + \abs{ \int_{\mathbb{T}^d}  \abs{u(t,x)}^{3 \alpha -1}  \mathbbm{1}_{\abs{u} \ge 1}   \ dx   }^2 \right) dt \right)^{\frac{p}{2}}
		\\	 & \le \tilde{C} (2D)^{\frac{p}{2}} \left(  T^{\frac{p}{2}} +  \mathbb{E} \left( \int_0^{T}  \abs{ \int_{\mathbb{T}^d}  \abs{u(t,x)}^{2 \alpha }  \mathbbm{1}_{\abs{u} \ge 1}   \ dx   }^2 \ dt \right)^{\frac{p}{2}} \right)
		\\ & \le \tilde{C} (2D)^{\frac{p}{2}} \left(  T^{\frac{p}{2}} +  \mathbb{E} \left( \int_0^{T}  \| u(t,\cdot) \|_{L^{2 \alpha}}^{4 \alpha} \ dt \right)^{\frac{p}{2}} \right)
			\\ & \le \tilde{C} (2D)^{\frac{p}{2}} T^{\frac{p}{2}} \left(  1 +   \mathbb{E} \esssup_{t \in [0,T]} \| u(t) \|_{L^{2 \alpha}}^{2 \alpha p }  \right).
		\end{split}
	\end{equation*}	
We define 
\begin{equation*}
	\bar{C}= \tilde{C} (2D)^{\frac{p}{2}}  T^{\frac{p}{2}} + \frac{D^p}{2 } \mathbbm{1}_{ \{\alpha =\frac{1}{2} \}} T^p +	\frac{D^p(2 \alpha -1)^p}{2}  T^p.
\end{equation*}
We let $ \epsilon \rightarrow 0$ in \eqref{A_priori_bound_2alpha} using Fatou's lemma and obtain for $  \underline{C} = \underline{C}(\alpha) >0$
	\begin{equation} \label{after_Fatou_lemma_a_priori_bound}
		\begin{split}
				\mathbb{E} \esssup_{t \in[0, T]} \| u(t) \|_{L^{2 \alpha}}^{2 \alpha p} &  \le  	\bar{C}  +  \tilde{C} (2D)^{\frac{p}{2}} T^{\frac{p}{2}}   \mathbb{E} \esssup_{t \in [0,T]} \| u(t) \|_{L^{2 \alpha}}^{2 \alpha p }
\\	 & +  \frac{D^p 2^{p(2 \alpha -1)}}{2 }  \mathbbm{1}_{ \{\alpha =1 \}} T^p \mathbb{E}  \esssup_{t \in [0,T]} \| u(t) \|^{2 \alpha p}_{L^{2 \alpha}}  
\\ &  +  \frac{D^p(2 \alpha -1)^p}{2}  T^p \mathbb{E}  \esssup_{t \in [0,T]} \| u(t) \|^{2 \alpha p}_{L^{2 \alpha}}  + \underline{C} \|u_0 \|_{L^{2 \alpha}}^{2 \alpha p}.
		\end{split}
	\end{equation}
We choose $ T$ small enough such that
\begin{equation} \label{T_small}
	\tilde{C} (2D)^{\frac{p}{2}} T^{\frac{p}{2}} + \frac{D^p}{2 } 2^{p(2 \alpha -1)} \mathbbm{1}_{ \{\alpha =1 \}} T^p +  \frac{D^p(2 \alpha -1)^p}{2}  T^p \le \frac{1}{2}.
\end{equation}
We denote $T^{\star}$ such $T$ satisfying \eqref{T_small}. Then, \eqref{after_Fatou_lemma_a_priori_bound} becomes
\begin{equation*}
	\mathbb{E} \esssup_{t \in[0, T^{\star}]} \| u(t) \|_{L^{2 \alpha}}^{2 \alpha p}  \le C(1+  \|u_0 \|_{L^{2 \alpha}}^{2 \alpha p}).
\end{equation*}
Iterating the argument, we arrive at 
\begin{equation*}
\mathbb{E} \esssup_{t \in[0, T]} \| u(t) \|_{L^{2 \alpha}}^{2 \alpha p}  \le C^{\lceil \frac{T}{T^{\star}} \rceil }(1+  \|u_0 \|_{L^{2 \alpha}}^{2 \alpha p}),
\end{equation*}
where $ \lceil \cdot \rceil $ denotes the ceiling function.
This leads to \eqref{Estimate_L1_energy}.
\end{proof}

	\begin{lem} \label{control_kinetic_measure_v_large_new}
	Let $\gamma \in (0,1)$, $\tilde{\gamma} \in (1, \infty)$ and $ \alpha \in [\frac{1}{2}, 1]$. Assume that $g_k$ satisfies  Assumption \ref{Assumption_diffusion_coefficient} for this value of $\alpha$. Let $u$ be a solution to \eqref{degenerate_parabolic_hyperbolic_SPDE} in the sense of Definition \ref{kinetic_solution} with $u_0 \in L^{2 \alpha}(\mathbb{T}^d)$. Then, there is a constant $C_1=C_1(T,\gamma,D) > 0$ such that
	\begin{equation} \label{lemma_existence_singular_moments_new}
		\begin{split}
			& 				\mathbb{E} \int_0^T \int_{\mathbb{T}^d}	\int_{\abs{v} \le 2}   \abs{v}^{- \gamma}  dn(t,x,v) \le C_1 (\|u_0 \|_{L^1} +1),
		\end{split}
	\end{equation}
	and a constant $C_2=C_2(T, \alpha, \tilde{\gamma},D) > 0$ such that
	\begin{equation} \label{lemma_existence_singular_moments_new_v_large}
		\begin{split}
			& 		\mathbb{E} \int_0^T \int_{\mathbb{T}^d} \int_{\abs{v} \ge 2}    \abs{v}^{2(\alpha- \tilde{\gamma})} dn(t,x,v)  \le C_2  (\|u_0 \|_{L^{2 \alpha}}^{2 \alpha} +1 ).
		\end{split}
	\end{equation}	
\end{lem}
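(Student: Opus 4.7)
The strategy is exactly the one informally described in the remark following \eqref{Ito_formula_additive_case}: I apply the kinetic formulation \eqref{kinetic_formulation_distribution_PME} with test functions of the form $\varphi(t,x,v) = \zeta_l(t)\psi_\epsilon(v)$ depending only on $t$ and $v$, which is the kinetic analogue of applying It\^o's formula to a convex function of $u$. This is the same device already used in the proof of Lemma \ref{control_apriori_estimate_new} to control $\|u(t)\|_{L^{2\alpha}}^{2\alpha}$. The key simplification is that, since $\psi_\epsilon$ is independent of $x$, the $\Delta_x \varphi$ term in \eqref{kinetic_formulation_distribution_PME} vanishes identically, so the kinetic measure appears with only the weight $\psi'_\epsilon(v)$ against it and no further dissipation loss. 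For estimate \eqref{lemma_existence_singular_moments_new} I choose odd $\psi_\epsilon$ with $\psi'_\epsilon(v) \uparrow |v|^{-\gamma}\mathbbm{1}_{|v|\le 2}$ as $\epsilon \downarrow 0$; for estimate \eqref{lemma_existence_singular_moments_new_v_large} I take odd $\psi_\epsilon$ with $\psi'_\epsilon(v) \uparrow |v|^{2(\alpha-\tilde\gamma)}\mathbbm{1}_{2\le |v|\le R}$ and let $R\to\infty$ at the end. Both $\psi'_\epsilon$ are nonnegative, and $\Psi_\epsilon(u):=\int_0^u \psi_\epsilon(v)\,dv \ge 0$.

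Proceeding exactly as in the derivation of \eqref{First_expression_apriori_bound}--\eqref{First_expression_apriori_bound_after_limit_in_time}, I pass to the limit $l \to \infty$ (Lebesgue differentiation in time) and then take expectation; the stochastic integral has mean zero once the integrand is sufficiently integrable (which holds for smooth $\psi_\epsilon$ of bounded support; a standard localization followed by Fatou removes the qualitative truncation). The inequality $n\ge n_1 \ge 0$ together with $\psi'_\epsilon \ge 0$ lets me discard the nonpositive term $-\int \Psi_\epsilon(u_0)\,dx$ on the left and arrive at
\begin{equation*}
	\mathbb{E} \int_0^T \!\int_{\mathbb{T}^d} \!\int_{\mathbb{R}} \psi'_\epsilon(v) \, dn(t,x,v) \le \mathbb{E} \int_{\mathbb{T}^d} \Psi_\epsilon(u(T)) \, dx + \frac{1}{2}\mathbb{E} \int_0^T \!\int_{\mathbb{T}^d} G^2(x,u) \psi'_\epsilon(u) \, dx \, dt.
\end{equation*}
Each term on the right is then estimated by direct calculation using Assumption \ref{Assumption_diffusion_coefficient} and Lemma \ref{control_apriori_estimate_new}, and monotone convergence in $\epsilon$ (and in $R$ for the second estimate) produces the claimed bounds.

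For \eqref{lemma_existence_singular_moments_new} the function $\Psi_\epsilon$ has at most linear growth ($\Psi_\epsilon(u)\le \tfrac{2^{1-\gamma}}{1-\gamma}|u|$), so $\mathbb{E}\int \Psi_\epsilon(u(T))\,dx \lesssim 1 + \mathbb{E}\|u(T)\|_{L^1} \lesssim 1 + \|u_0\|_{L^1}$ by Lemma \ref{control_apriori_estimate_new} applied with $\alpha=\tfrac{1}{2}$, $p=1$, while the It\^o correction is bounded by
\begin{equation*}
D \int_0^T \!\int_{\mathbb{T}^d} \bigl(|u|^{1-\gamma} \mathbbm{1}_{|u|\le 1} + |u|^{2\alpha-\gamma}\mathbbm{1}_{1\le |u|\le 2}\bigr) \, dx \, dt \lesssim DT,
\end{equation*}
uniformly in $\epsilon$. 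For \eqref{lemma_existence_singular_moments_new_v_large}, $\Psi_\epsilon(u)\lesssim 1 + |u|^{2(\alpha-\tilde\gamma)+2} \lesssim 1 + |u|^{2\alpha}$ since $\tilde\gamma > 1$; moreover $4\alpha - 2\tilde\gamma \le 2\alpha$ (because $\tilde\gamma > 1 \ge \alpha$), so the It\^o correction is bounded by $D\int_0^T\int_{\mathbb{T}^d} |u|^{4\alpha-2\tilde\gamma}\mathbbm{1}_{|u|\ge 2}\,dx\,dt \lesssim D\int_0^T \|u(t)\|_{L^{2\alpha}}^{2\alpha}\,dt$. Both terms are then controlled by $\|u_0\|_{L^{2\alpha}}^{2\alpha}+1$ via Lemma \ref{control_apriori_estimate_new}.

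The main technical obstacle, and the point where the stochastic nature of the problem creates genuine difficulty relative to the deterministic setting of \cite{gess2019optimal}, is the It\^o correction in the first estimate: without the vanishing $|g_k(x,v)|^2 \lesssim \lambda_k^2 |v|$ near $v=0$ built into Assumption \ref{Assumption_diffusion_coefficient}, the integral $\int G^2 \psi'_\epsilon(u)\,dx\,dt$ would diverge as $\epsilon\downarrow 0$ due to the $|v|^{-\gamma}$ weight. The degeneracy of $g_k$ at zero provides exactly the extra factor of $|u|$ needed to reduce the integrand to the bounded quantity $|u|^{1-\gamma}$ on $\{|u|\le 1\}$. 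Apart from this cancellation, which is the whole raison d'\^etre of the decay assumption at zero, the remaining work is a routine regularization completely parallel to the proof of Lemma \ref{control_apriori_estimate_new}.
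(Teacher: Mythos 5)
Your overall strategy is the right one, and you have correctly identified the three load-bearing ideas: test functions $\zeta_l(t)\psi_\epsilon(v)$ independent of $x$ kill the $\Delta_x$ term, the degeneracy $|g_k(x,v)|^2\lesssim\lambda_k^2|v|$ near $v=0$ tames the It\^o correction, and Lemma \ref{control_apriori_estimate_new} handles the remaining terms. However, there is a genuine gap in the construction of the test function. You require $\psi_\epsilon'\ge 0$ with $\psi_\epsilon'\uparrow|v|^{-\gamma}\mathbbm{1}_{|v|\le 2}$; since $\psi_\epsilon$ is odd and monotone, it is therefore equal to a nonzero constant for $|v|\ge 2$ and is \emph{not} compactly supported in $v$, while Definition \ref{kinetic_solution} demands $\varphi\in C_c^\infty([0,T)\times\mathbb{T}^d\times\mathbb{R})$. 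You dismiss this as ``a standard localization followed by Fatou,'' but the localization is not standard here: multiplying $\psi_\epsilon$ by a cutoff $\Upsilon_R$ with $\Upsilon_R=1$ on $|v|\le R$ and $\Upsilon_R=0$ on $|v|\ge 2R$ produces the extra term $\int \zeta_l\,\psi_\epsilon\,\Upsilon_R'\,dn$, of size $\lesssim R^{-1}\,\mathbb{E}\,|n(\{R\le|v|\le 2R\})|$, and you have no a priori handle on this quantity. A kinetic measure in the sense of Definition \ref{kinetic_solution} is a Radon measure on $[0,T]\times\mathbb{T}^d\times\mathbb{R}$ and need not have finite total mass; the inequality $n\ge n_1$ is a \emph{lower} bound only. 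This is precisely what the paper's Steps 1a and 1b supply: they first apply the kinetic formulation with the simpler test functions $\Theta_\epsilon'(v)$ and $\Theta_h'(v)$ to obtain $\epsilon^{-1}\mathbb{E}|n(\{|v|\le\epsilon\})|\lesssim \|u_0\|_{L^1}+1$ and $h^{-1}\mathbb{E}|n(\{h\le|v|\le 2h\})|\lesssim\|u_0\|_{L^1}+1$ (and its scaled variant $h^{2-2\alpha}$), and only then (Steps 2a, 2b) insert the singular-weight test functions, whose compact-support cutoff at moderate $|v|$ produces exactly the boundary terms that Step 1 makes controllable. Your proposal collapses the two-stage argument into a single step; for the second estimate in particular, the factor $\psi_\epsilon(R)\sim R^{2(\alpha-\tilde\gamma)+1}$ combined with the a priori unknown growth of $\mathbb{E}|n(\{|v|\sim R\})|$ would leave the boundary contribution completely uncontrolled.

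A secondary slip, orthogonal to the above: your displayed inequality has $\mathbb{E}\int\Psi_\epsilon(u(T))\,dx$ where it should have $\int\Psi_\epsilon(u_0)\,dx$. With $\varphi=\zeta_l\psi_\epsilon$, $\zeta_l(0)=1$, and $\zeta_l'\le 0$, the kinetic formulation produces $\int\chi(0)\varphi(0)=\int\Psi_\epsilon(u_0)$ as the initial-time term, while the Lebesgue-differentiation term $\mathbb{E}\int\zeta_l'\Psi_\epsilon(u)\to-\mathbb{E}\int\Psi_\epsilon(u(\bar t))\le 0$ is the one with the favourable sign that gets discarded. The final bound is unaffected (both quantities are $\lesssim\|u_0\|_{L^1}+1$), and in fact the correct version is slightly cheaper since it does not invoke Lemma \ref{control_apriori_estimate_new} for that term, but the confusion about which data term lands where is worth straightening out.
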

\begin{proof}
	  All the test functions that we construct in this proof have compact support  and bounded derivatives. A suitable approximation argument shows that they are appropriate test functions for the Definition \ref{kinetic_solution}.
	
	\noindent We derive \eqref{lemma_existence_singular_moments_new}  in two steps. 
	
	\noindent	\textbf{Step 1a} \ For fixed $ 0 <\epsilon  \le 3$, let  $\Theta_{\epsilon}$ be defined as follows
	\begin{equation*}
		\Theta_{\epsilon}(u) = 
		\frac{u^2}{2 \epsilon}  \mathbbm{1}_{\abs{u} \le \epsilon} + \left(\abs{u} - \frac{\epsilon}{2} \right) \mathbbm{1}_{\abs{u} > \epsilon} .
	\end{equation*}
	Let $ \zeta_l \in C^{\infty}_c([0,T))$ such that $0 \le \zeta_l(t) \le 1$, $\zeta_l \equiv 1$ if $t \in [0,T- \frac{2}{l}]$, $\zeta_l \equiv 0$ if $t >T- \frac{1}{l}$.  Choosing $\varphi(t,x,v) = \zeta_l(t) \Theta_{\epsilon}^{'}(v)$ and taking the expectation in \eqref{kinetic_formulation_distribution_PME}, we get
	\begin{equation} \label{energy_estimate_v_small_singular}
		\begin{split}
			& \mathbb{E} \int_0^T \int_{\mathbb{T}^d}  \zeta_l^{'}(t) \Theta_{\epsilon}(u(t,x))  \ dx \ dt  +  \int_{\mathbb{T}^d}   \Theta_{\epsilon}(u_0(x))   dx   \\ & =  \frac{1}{\epsilon} \mathbb{E}   \int_{A_{\epsilon}}   \zeta_l(t)  dn(t,x,v)    -  \frac{1}{2} \mathbb{E}  \int_0^T \int_{\mathbb{T}^d} G^2(x,u(t,x))  \zeta_l(t)  \Theta_{\epsilon}^{''}(u(t,x)) \ dx \ dt,
		\end{split}
	\end{equation}	
	where $A_{\epsilon} = [0,T] \times \mathbb{T}^d \times [- \epsilon, \epsilon]$. Due to the Assumption \ref{Assumption_diffusion_coefficient}, 
	\begin{equation*}
			\begin{split}
		\frac{1}{2} G^2(x,u)  \zeta_l(t)  \Theta_{\epsilon}^{''}(u) \le \frac{3^{2 \alpha -1 } D }{2 \epsilon}  \abs{u} \mathbbm{1}_{\abs{u} \le \epsilon} \le \frac{3^{2 \alpha -1 } D }{2 } .
		\end{split}
	\end{equation*}
We have
	\begin{equation} \label{energy_estimate_v_small_singular_no_cutoff}
		\begin{split}
			\frac{1}{\epsilon} \mathbb{E}    |   n ([0,T] \times \mathbb{T}^d \times [- \epsilon,\epsilon ]) |   &\le     \int_{\mathbb{T}^d}   \abs{u_0(x)}  dx + 3^{2\alpha -1} \frac{ DT}{2}.
		\end{split}
	\end{equation}
	\noindent	\textbf{Step 2a} \ Let $  \zeta_l$ be as in Step 1a, $ 0 < \varepsilon < 2$ and  
	\begin{equation*}
		\eta^{\varepsilon} (v) = \frac{\abs{v}}{\varepsilon} \mathbbm{1}_{\abs{v}\le \varepsilon} + \mathbbm{1}_{\varepsilon < \abs{v} \le 2} + (3- \abs{v})\mathbbm{1}_{ 2 < \abs{v} \le 3 }. 
	\end{equation*}
	Let 
	\begin{equation*}
		\psi^{\varepsilon}(v)  =  \eta^{\varepsilon}(v) \abs{v}^{1-\gamma} \sgn(v),
	\end{equation*}
	and $\Psi^{\varepsilon} (v) = \int_0^v 	\psi^{\varepsilon}(\tilde{v}) d\tilde{v}$.
	We may choose $\varphi(t,x,v) = \zeta_l(t) \psi^{\varepsilon}(v)  $  and take the expectation in \eqref{kinetic_formulation_distribution_PME} to have
	\begin{equation} \label{kinetic_form_energy_estimate}
		\begin{split}
			&	 \mathbb{E} \int_0^T \int_{\mathbb{T}^d}  \zeta_l^{'}(t) \Psi^{\varepsilon}(u(t,x))     \ dx \ dt     + 		\int_{\mathbb{T}^d} \int_{\mathbb{R}} \chi(0,x,v) 	\psi^{\varepsilon}(v) \ dv \ dx	
			\\ & =  \mathbb{E} \int_0^T \int_{\mathbb{T}^d} \int_{\mathbb{R}} \zeta_l(t)    (\eta^{\varepsilon})^{'}(v) \abs{v}^{1-\gamma} \sgn(v) \ dn(t,x,v) 	\\ & +(1-\gamma)  \mathbb{E} \int_0^T \int_{\mathbb{T}^d} \int_{\mathbb{R}} \zeta_l(t)    \eta^{\varepsilon}(v) \abs{v}^{-\gamma}  \ dn(t,x,v)   \\  &  -   \frac{1}{2} \mathbb{E} \int_0^T \int_{\mathbb{T}^d} G^2(x,u(t,x)) \zeta_l(t)  (\eta^{\varepsilon})^{'}(u(t,x)) \abs{u(t,x)}^{1-\gamma} \sgn(u(t,x))  \ dx \ dt
			  \\  &  -   \frac{(1-\gamma)}{2} \mathbb{E} \int_0^T \int_{\mathbb{T}^d} G^2(x,u(t,x)) \zeta_l(t)  \eta^{\varepsilon}(u(t,x)) \abs{u(t,x)}^{-\gamma}   \ dx \ dt.
		\end{split}
	\end{equation}	
	Since $ \eta^{\varepsilon}(v) \le \mathbbm{1}_{\abs{v} \le 3}$,  we have 
	\begin{equation*}
		\begin{split}
			\int_{\mathbb{T}^d} \int_{\mathbb{R}} \chi(0,x,v) 	\psi^{\varepsilon}(v) \ dv \ dx	 \le 	\int_{\mathbb{T}^d} \int_{\mathbb{R}}  \abs{\chi}(0,x,v) \mathbbm{1}_{\abs{v} \le 3} \abs{v}^{1-\gamma} \ dv \ dx \le 2 \cdot 3^{2- \gamma}.
		\end{split}
	\end{equation*}	
	Using the estimate \eqref{energy_estimate_v_small_singular_no_cutoff} in Step 1a, we have
	\begin{equation*}
		\begin{split}
&   \mathbb{E} \int_0^T \int_{\mathbb{T}^d} \int_{\mathbb{R}} \zeta_l(t)    (\eta^{\varepsilon})^{'}(v) \abs{v}^{1-\gamma} \sgn(v) dn(t,x,v)  \\ & \le  \mathbb{E} \int_0^T \int_{\mathbb{T}^d} \int_{\mathbb{R}}    \mathbbm{1}_{\abs{v} \le \varepsilon} \frac{\abs{v}^{1-\gamma}}{\varepsilon} \ dn(t,x,v) +  \mathbb{E} \int_0^T \int_{\mathbb{T}^d} \int_{\mathbb{R}}     \mathbbm{1}_{2 < \abs{v} \le 3} \ dn(t,x,v)
			\\ & \le \varepsilon^{1-\gamma} \left( \frac{\mathbb{E}    |   n ([0,T] \times \mathbb{T}^d \times [- \varepsilon,\varepsilon ]) |}{\varepsilon} \right) +  \mathbb{E}    |   n ([0,T] \times \mathbb{T}^d \times [- 3,3 ]) |
			\\ & \le \left( \varepsilon^{1-\gamma} + 3 \right) \left( \| u_0 \|_{L^1} + 3^{2\alpha -1} \frac{DT}{2} \right).
		\end{split}
	\end{equation*}
	By Assumption \ref{Assumption_diffusion_coefficient}, we have
	
	\begin{equation*}
		\begin{split}
& \frac{1}{2} \mathbb{E} \int_0^T \int_{\mathbb{T}^d} G^2(x,u(t,x))  \zeta_l(t)  (\eta^{\varepsilon})^{'}(u(t,x)) \abs{u(t,x)}^{1-\gamma} \sgn(u(t,x)) \ dx \ dt 
\\ & \le \frac{3^{2 \alpha -1} D}{2} \left(	\mathbb{E} \int_0^T \int_{\mathbb{T}^d}     (\mathbbm{1}_{\abs{u(t,x)} \le \varepsilon}  \frac{\abs{u(t,x)}^{2-\gamma}}{\varepsilon} + \abs{u(t,x)}^{2-\gamma} \mathbbm{1}_{2 < \abs{u(t,x)} \le 3} ) \ dx \ dt  \right)
			\\ &  \le \frac{3^{2 \alpha -1} D}{2} T (\varepsilon^{1- \gamma}+ 3^{2 - \gamma}).
		\end{split}
	\end{equation*}
	Since $ \eta^{\varepsilon}(u(t,x)) \le \mathbbm{1}_{\abs{u(t,x)} \le 3}$ and using Assumption \ref{Assumption_diffusion_coefficient}, we have
	\begin{equation*}
		\begin{split}
			& \frac{(1-\gamma)}{2}	\mathbb{E} \int_0^T \int_{\mathbb{T}^d}  G^2(x,u(t,x))  \zeta_l(t)   \eta^{\varepsilon}(u(t,x))
			\abs{u(t,x)}^{-\gamma}  \ dx \ dt  \\	& \le  \frac{(1-\gamma) 3^{2 \alpha - 1} D}{2} 	\mathbb{E} \int_0^T \int_{\mathbb{T}^d}    \abs{u(t,x)}^{1- \gamma} \mathbbm{1}_{  \abs{u(t,x)} \le 3}    \ dx \ dt  \\ &  \le  \frac{(1-\gamma) 3^{2 \alpha - \gamma} D}{2} T.
		\end{split}
	\end{equation*}
We have
	\begin{equation*}
		\begin{split}
					    &  	(1-\gamma)  \mathbb{E} \int_0^T \int_{\mathbb{T}^d} \int_{\mathbb{R}}    \eta^{\varepsilon}(v) \abs{v}^{-\gamma}  \ dn(t,x,v)  \\ & \le  2 \cdot 3^{2- \gamma}  + \left( \varepsilon^{1-\gamma} + 3 \right) \left( \| u_0 \|_{L^1} + 3^{2\alpha -1} \frac{DT}{2} \right)  + \frac{3^{2 \alpha -1} D }{2} T (\varepsilon^{1- \gamma}+ 3^{2 - \gamma}) \\ & + \frac{(1-\gamma) 3^{2 \alpha - \gamma} D}{2}T.
		\end{split}
	\end{equation*}
Letting  $ \varepsilon \rightarrow 0$ and using Fatou's lemma, we arrive at \eqref{lemma_existence_singular_moments_new}.
	
\noindent	We derive \eqref{lemma_existence_singular_moments_new_v_large}  in two steps.
	
	\noindent		\textbf{Step 1b} \ Let $ \zeta_l$ be as in Step 1a. For fixed $   h \ge 1$, let
	\begin{equation*}
		\Theta_{h}^{''}(u) = \frac{1}{h} \mathbbm{1}_{h \le \abs{u} \le 2h}, \quad \quad \Theta_{h}(u) = \int_0^u \int_0^r 	\Theta_{h}^{''}(s) \ ds \ dr.
	\end{equation*}
	Choosing $\varphi(t,x,v) = \zeta_l(t) \Theta_{h}^{'}(v)$ and taking the expectation in \eqref{kinetic_formulation_distribution_PME}, we get
	\begin{equation} \label{energy_estimate_v_large_singular}
		\begin{split}
			& \mathbb{E} \int_0^T \int_{\mathbb{T}^d}  \zeta_l^{'}(t) \Theta_{h}(u(t,x))  \ dx \ dt  + 	\int_{\mathbb{T}^d} \int_{\mathbb{R}} \chi(0,x,v) \Theta_h^{'}(v) \ dv \ dx	  \\ & =  \frac{1}{h} \mathbb{E}   \int_{A_{h}}   \zeta_l(t)  dn(t,x,v)          -  \frac{1}{2} \mathbb{E}  \int_0^T \int_{\mathbb{T}^d} G^2(x,u(t,x))  \zeta_l(t)  \Theta_{h}^{''}(u(t,x)) \ dx \ dt,
		\end{split}
	\end{equation}	
	where $A_{h}= [0,T] \times \mathbb{T}^d \times ([-2h,-h]\cup [h, 2h])$. Since $ \Theta_h^{'}(v) \le \mathbbm{1}_{\abs{v} \ge 1}$, we have
	\begin{equation*}
		\begin{split}
			\int_{\mathbb{T}^d} \int_{\mathbb{R}} \chi(0,x,v) \Theta_h^{'}(v) \ dv \ dx	 \le 	\int_{\mathbb{T}^d} \int_{\mathbb{R}}  \abs{\chi}(0,x,v) \mathbbm{1}_{\abs{v} \ge 1} \ dv \ dx \le \|u_0 \|_{L^1}.
		\end{split}
	\end{equation*}	
	Recall that $ \alpha \in [\frac{1}{2},1]$. Using Assumption \ref{Assumption_diffusion_coefficient} and Lemma \ref{control_apriori_estimate_new}, we have for $C=C(T,\alpha, D) > 0$
	\begin{equation*} 
		\begin{split}
			&	 \frac{1}{2} \mathbb{E}  \int_0^T \int_{\mathbb{T}^d} G^2(x,u(t,x))  \zeta_l(t)  \Theta_{h}^{''}(u(t,x)) \ dx \ dt \\ & \le 	 \frac{D}{2} \mathbb{E}  \int_0^T \int_{\mathbb{T}^d} \frac{\abs{u(t,x)}^{2 \alpha}}{h} \mathbbm{1}_{h \le \abs{u(t,x)} \le 2h}   \ dx \ dt  \\ & \le 2^{2 \alpha -2} D
				 h^{2 \alpha -2} \mathbb{E}  \int_0^T \| u(t,\cdot) \|_{L^1_x}    \ dt
				 \\ & \le  C h^{2 \alpha -2}( \|u_0 \|_{L^1} +1 ).
		\end{split}
	\end{equation*}
We obtain
	\begin{equation} \label{energy_estimate_v_large_singular_no_cutoff}
		\begin{split}
				\frac{1}{h} \mathbb{E}    |   n ([0,T] \times \mathbb{T}^d \times ([-2h,-h]\cup [h, 2h]) ) | 
	 \le \|u_0 \|_{L^1} + C h^{2 \alpha -2}( \|u_0 \|_{L^1} +1 ).
		\end{split}
	\end{equation}
Furthermore, we may multiply \eqref{energy_estimate_v_large_singular} by $h$  and observe that
	\begin{equation*}
		h \Theta_h(u_0) \le h^{2 - 2 \alpha } \abs{u_0}^{2 \alpha }.
	\end{equation*}
In addition, using Assumption \ref{Assumption_diffusion_coefficient} and Lemma \ref{control_apriori_estimate_new}
	\begin{equation*}
		\begin{split}
& \frac{1}{2} \mathbb{E}  \int_0^T \int_{\mathbb{T}^d} h \ G^2(x,u(t,x))  \zeta_l(t)  \Theta_{h}^{''}(u(t,x)) \ dx \ dt \\ &	 \le \frac{D}{2} \mathbb{E}  \int_0^T \int_{\mathbb{T}^d} \abs{u}^{2 \alpha } \mathbbm{1}_{h \le u \le 2h}  \ dx \ dt
\\ & \le \frac{C}{2}(\|u_0 \|_{L^{2 \alpha}}^{2 \alpha} +1).
\end{split}
	\end{equation*}
Similarly as done for the estimate \eqref{energy_estimate_v_large_singular_no_cutoff}, we arrive at
	\begin{equation} \label{energy_estimate_v_large_singular_no_cutoff_h}
		\begin{split}
		 \mathbb{E}    |   n ([0,T] \times \mathbb{T}^d \times([-2h,-h]\cup [h, 2h]) ) |   \le h^{2 - 2 \alpha } \|u_0 \|_{L^{2 \alpha}}^{2 \alpha} + \frac{C}{2} (\|u_0 \|_{L^{2 \alpha}}^{2 \alpha} +1).
		\end{split}
	\end{equation}	
	
	\noindent	\textbf{Step 2b} \ Let $  \zeta_l$ be as in Step 1a, $ \varkappa > 2$ and  
	\begin{equation*}
		\eta^{\varkappa } (v) = (\abs{v} -1) \mathbbm{1}_{1 \le \abs{v}\le 2} + \mathbbm{1}_{2 < \abs{v} \le \varkappa  } + \left( 2 - \frac{\abs{v}}{\varkappa} \right) \mathbbm{1}_{\varkappa < \abs{v} \le 2 \varkappa}. 
	\end{equation*}
	Let 
	\begin{equation*}
		\psi^{\varkappa}(v)  =  \eta^{\varkappa}(v) \abs{v}^{2(\alpha - \tilde{\gamma})+1} \sgn(v),
	\end{equation*}
	and let $\Psi^{\varkappa} (v) = \int_0^{v} 	\psi^{\varkappa}(\tilde{v}) d\tilde{v}$.
	We may choose $\varphi(t,x,v) = \zeta_l(t) \psi^{\varkappa}(v)  $  and take the expectation in \eqref{kinetic_formulation_distribution_PME} to have
	
	\begin{equation} \label{kinetic_form_energy_estimate_v_large}
		\begin{split}
			&	 \mathbb{E} \int_0^T \int_{\mathbb{T}^d}  \zeta_l^{'}(t) \Psi^{\varkappa}(u(t,x))     \ dx \ dt     + 	\int_{\mathbb{T}^d} \int_{\mathbb{R}} \chi(0,x,v)  \eta^{\varkappa}(v) \abs{v}^{2(\alpha - \tilde{\gamma})+1} \sgn(v) \ dv \ dx
\\	& =  \mathbb{E} \int_0^T \int_{\mathbb{T}^d} \int_{\mathbb{R}} \zeta_l(t)    (\eta^{\varkappa})^{'}(v) \abs{v}^{2(\alpha - \tilde{\gamma})+1} \sgn(v) \ dn(t,x,v)  \\ & + (2(\alpha - \tilde{\gamma}) +1) \mathbb{E} \int_0^T \int_{\mathbb{T}^d} \int_{\mathbb{R}} \zeta_l(t)    \eta^{\varkappa}(v) \abs{v}^{2(\alpha - \tilde{\gamma})}  \ dn(t,x,v)   \\  &  -   \frac{1}{2} \mathbb{E} \int_0^T \int_{\mathbb{T}^d} G^2(x,u(t,x)) \zeta_l(t)  (\eta^{\varkappa})^{'}(u(t,x)) \abs{u(t,x)}^{2(\alpha - \tilde{\gamma})+1} \sgn(u(t,x)) \ dx \ dt
			  \\  &  -   \frac{2(\alpha - \tilde{\gamma})+1}{2} \mathbb{E} \int_0^T \int_{\mathbb{T}^d} G^2(x,u(t,x)) \zeta_l(t)  \eta^{\varkappa}(u(t,x)) \abs{u(t,x)}^{2(\alpha - \tilde{\gamma})}  \ dx \ dt.
		\end{split}
	\end{equation}
	Since $  \eta^{\varkappa}(v) \le \mathbbm{1}_{\abs{v} \ge 1}$, we have
	\begin{equation*}
		\begin{split}
			\int_{\mathbb{T}^d} \int_{\mathbb{R}} \chi(0,x,v)  \eta^{\varkappa}(v) \abs{v}^{2(\alpha - \tilde{\gamma})+1} \sgn(v) \ dv \ dx	 \le \|u_0 \|_{L^{2 \alpha}}^{2 \alpha}.
		\end{split}
	\end{equation*}	
	Using estimates \eqref{energy_estimate_v_large_singular_no_cutoff} and \eqref{energy_estimate_v_large_singular_no_cutoff_h} in Step 1b, we have
	\begin{equation*}
		\begin{split}
&	 \mathbb{E} \int_0^T \int_{\mathbb{T}^d} \int_{\mathbb{R}} \zeta_l(t)    (\eta^{\varkappa})^{'}(v) \abs{v}^{2(\alpha - \tilde{\gamma})+1} \sgn(v) \ dn(t,x,v) 
	\\ &	\le	 \mathbb{E} \int_0^T \int_{\mathbb{T}^d} \int_{\mathbb{R}}  \left(  \mathbbm{1}_{1 \le \abs{v}\le 2}   + \frac{\mathbbm{1}_{\varkappa \le \abs{v} \le 2 \varkappa}}{\varkappa}   \right) \abs{v}^{2(\alpha - \tilde{\gamma})+1} dn(t,x,v) \\ & \le 2^{2(\alpha - \tilde{\gamma})+1} (\mathbb{E}    |   n ([0,T] \times \mathbb{T}^d \times ([-2,-1 ] \cup [1,2 ])) |   \\ & +  \varkappa^{2(\alpha - \tilde{\gamma})} \mathbb{E}    |   n ([0,T] \times \mathbb{T}^d \times ([-2 \varkappa, - \varkappa ] \cup [\varkappa,2 \varkappa ])) |)
			\\ & \le 2^{2(\alpha - \tilde{\gamma})+1}   \left[(1+C) \|u_0 \|_{L^1}  +C  \right] + \varkappa^{2(1 - \tilde{\gamma})} \|u_0 \|_{L^{2 \alpha}}^{2 \alpha}  +   \frac{\varkappa^{2(\alpha - \tilde{\gamma})} C}{2} ( \|u_0 \|_{L^{2 \alpha}}^{2 \alpha} +1). 
		\end{split}
	\end{equation*}
By Assumption \ref{Assumption_diffusion_coefficient} and Lemma \ref{control_apriori_estimate_new}, we have
	\begin{equation*}
		\begin{split}
& \frac{1}{2} \mathbb{E} \int_0^T \int_{\mathbb{T}^d} G^2(x,u(t,x)) \zeta_l(t)  (\eta^{\varkappa})^{'}(u(t,x)) \abs{u(t,x)}^{2(\alpha - \tilde{\gamma})+1} \sgn(u(t,x)) \ dx \ dt	\\ & \le  \frac{1}{2}	\mathbb{E} \int_0^T \int_{\mathbb{T}^d} G^2(x,u(t,x))   \left(  \mathbbm{1}_{1 \le \abs{u(t,x)}\le 2}   + \frac{\mathbbm{1}_{\varkappa \le \abs{u(t,x)} \le 2 \varkappa}}{\varkappa}   \right) \abs{u(t,x)}^{2(\alpha - \tilde{\gamma})+1} \ dx \ dt  
\\ & \le \frac{D}{2} \Bigg(	\mathbb{E} \int_0^T \int_{\mathbb{T}^d}   \mathbbm{1}_{1 \le \abs{u(t,x)}\le 2} \abs{u(t,x)}^{2 \alpha + 2(\alpha - \tilde{\gamma})+1} \ dx \ dt  \\ & +	\mathbb{E} \int_0^T \int_{\mathbb{T}^d}      \mathbbm{1}_{\varkappa \le \abs{u(t,x)} \le 2 \varkappa} \frac{\abs{u(t,x)}^{2 \alpha + 2(\alpha - \tilde{\gamma})+1}}{\varkappa} \ dx \ dt \Bigg)
 \\ &  \le D  \left( 2^{2 \alpha + 2(\alpha - \tilde{\gamma})} T +  \mathbb{E} \int_0^T \int_{\mathbb{T}^d}    \mathbbm{1}_{\varkappa \le \abs{u(t,x)} \le 2 \varkappa} \abs{u(t,x)}^{2 \alpha + 2(\alpha - \tilde{\gamma})}  \ dx \ dt  \right)
\\ &  \le D  \left( 2^{2 \alpha + 2(\alpha - \tilde{\gamma})} T +  \mathbb{E} \int_0^T    \| u(t,\cdot) \|^{2 \alpha}_{L^{2 \alpha}_x} \ dt  \right)
\\ & \le   2^{2 \alpha + 2(\alpha - \tilde{\gamma})} TD  + C (  \|u_0 \|_{L^{2 \alpha}}^{2 \alpha } +1)  .
		\end{split}
	\end{equation*}
	Since $ \eta^{\varkappa}(u(t,x)) \le \mathbbm{1}_{\abs{u(t,x)} \ge 1}$ and using Assumption \ref{Assumption_diffusion_coefficient}, Lemma \ref{control_apriori_estimate_new}, we have
	\begin{equation*}
		\begin{split}
		& \frac{2(\alpha - \tilde{\gamma})+1}{2}	\mathbb{E} \int_0^T \int_{\mathbb{T}^d} G^2(x,u(t,x)) \zeta_l(t)  \eta^{\varkappa}(u(t,x)) \abs{u(t,x)}^{2(\alpha - \tilde{\gamma})}  \ dx \ dt
 \\	& \le \frac{(2(\alpha - \tilde{\gamma})+1)D }{2}	\mathbb{E} \int_0^T \int_{\mathbb{T}^d}   \mathbbm{1}_{\abs{u(t,x)} \ge 1}  
\abs{u(t,x)}^{2\alpha + 2(\alpha - \tilde{\gamma})}   \ dx \ dt
 \\	& \le \frac{(2(\alpha - \tilde{\gamma})+1)D }{2}\mathbb{E} \int_0^T    \| u(t,\cdot) \|^{2 \alpha}_{L^{2 \alpha}_x} \ dt
 \\ & \le  \frac{C (2(\alpha - \tilde{\gamma})+1) }{2}  (  \|u_0 \|_{L^{2 \alpha}}^{2 \alpha } +1) .
		\end{split}
	\end{equation*}
We have
		\begin{equation} \label{Step2b_almost_conclusion}
		\begin{split}
		& (2(\alpha - \tilde{\gamma}) +1) \mathbb{E} \int_0^T \int_{\mathbb{T}^d} \int_{\mathbb{R}}   \eta^{\varkappa}(v) \abs{v}^{2(\alpha - \tilde{\gamma})}  \ dn(t,x,v) \\ & \le \|u_0 \|_{L^{2 \alpha}}^{2 \alpha} +  2^{2(\alpha - \tilde{\gamma})+1}   \left[(1+C) \|u_0 \|_{L^1}  +C \right] + \varkappa^{2(1 - \tilde{\gamma})} \|u_0 \|_{L^{2 \alpha}}^{2 \alpha}  \\ & +  \frac{ \varkappa^{2(\alpha - \tilde{\gamma})} C}{2} ( \|u_0 \|_{L^{2 \alpha}}^{2 \alpha} +1)  + 2^{2 \alpha + 2(\alpha - \tilde{\gamma})} TD  + C (  \|u_0 \|_{L^{2 \alpha}}^{2 \alpha } +1)   
			\\ & +   \frac{C(2(\alpha - \tilde{\gamma})+1) }{2}  (  \|u_0 \|_{L^{2 \alpha}}^{2 \alpha } +1).
				\end{split}
		\end{equation}
	Letting  $ \varkappa \rightarrow \infty$ and using Fatou's lemma in \eqref{Step2b_almost_conclusion}, we arrive at
	\begin{equation*}
			\begin{split}
		&    (2(\alpha - \tilde{\gamma}) +1) \mathbb{E} \int_0^T \int_{\mathbb{T}^d} \int_{\mathbb{R}}    \abs{v}^{2(\alpha - \tilde{\gamma})}  \ dn(t,x,v)
	\\	 &  \le 	\|u_0 \|_{L^{2 \alpha}}^{2 \alpha} + 2^{2(\alpha - \tilde{\gamma})+1} [(1+C) \|u_0 \|_{L^1} + C]+  2^{2 \alpha + 2(\alpha - \tilde{\gamma})}  T D \\ & + C (  \|u_0 \|_{L^{2 \alpha}}^{2 \alpha } +1)   +  \frac{C(2(\alpha - \tilde{\gamma})+1) }{2}  (  \|u_0 \|_{L^{2 \alpha}}^{2 \alpha } +1)
	\\ &	\le C_2  (\|u_0 \|_{L^{2 \alpha}}^{2 \alpha} +1 ).
		\end{split}
	\end{equation*}
\end{proof}

\begin{oss}
	It follows from Lemma \ref{control_kinetic_measure_v_large_new} and the definition of $n_1$ in \eqref{definition_n1_parabolic_dissipation_measure} that we can control $ \mathbbm{1}_{\abs{u} \le 2 }\nabla u^{\frac{m+ 1- \gamma}{2}} \in L^2_{\omega,t,x}$ by $u_0 \in L^1_x$ and $ \mathbbm{1}_{\abs{u} \ge 2 }\nabla u^{\frac{m+ 1}{2} + \alpha - \tilde{\gamma}} \in L^2_{\omega,t,x}$ by $u_0 \in L^{2 \alpha}_x$.
\end{oss}

\begin{lem} \label{Control_higher_powers_Lp}
	Let  $m \in (1, \infty)$ and $ \alpha \in [\frac{1}{2},1]$. Let $u: \Omega \times (0,T) \times \mathbb{T}^d \rightarrow \mathbb{R}$ be a measurable function. Then for any $r \in [\frac{1}{2}, \frac{m- 1}{2} + \alpha )$, there is a constant $C=C(m, \alpha, T,d) > 0$ such that
	\begin{equation} \label{Poincare_lemma_induction}
		\begin{split}
		\mathbb{E}	\|  u  \|_{L^{2r}([0,T] \times \mathbb{T}^d)}^{2r}    \le C 	 \left( \esssup_{t \in [0,T]} \mathbb{E}  	\|  u(t)  \|_{L^{1}( \mathbb{T}^d)}^{2r}     + \mathbb{E} \int_0^T \int_{\mathbb{T}^d} \abs{ \nabla (u(t,x))^{r}}^{2} \ dx \ dt    \right).
		\end{split}
	\end{equation}
\end{lem}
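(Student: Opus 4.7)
The plan is to reduce the claimed estimate to a deterministic Sobolev--Poincar{\'e} bound applied pointwise in $(\omega, t)$ to the non-negative function $v(t,x) := |u(t,x)|^r$ on $\mathbb{T}^d$. With this substitution, $\|u(t)\|_{L^{2r}_x}^{2r} = \|v(t)\|_{L^2_x}^2$ and $\|u(t)\|_{L^1_x}^{2r} = \|v(t)\|_{L^{1/r}_x}^2$, while $|\nabla v|$ matches (up to a factor of $r$) the dissipation term on the right-hand side of \eqref{Poincare_lemma_induction}. The crux is therefore the deterministic inequality
\begin{equation*}
\|v\|_{L^2(\mathbb{T}^d)}^2 \;\le\; C \bigl(\|\nabla v\|_{L^2(\mathbb{T}^d)}^2 + \|v\|_{L^{1/r}(\mathbb{T}^d)}^2\bigr),
\end{equation*}
valid for every non-negative $v \in H^1(\mathbb{T}^d)$ and every $r \ge 1/2$, with $C = C(r,d)$.

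To prove this inequality I would argue by contradiction, combined with Rellich--Kondrachov compactness. If no such $C$ existed, there would be a sequence $v_n \ge 0$ with $\|v_n\|_{L^2} = 1$ and $\|\nabla v_n\|_{L^2} + \|v_n\|_{L^{1/r}} \to 0$. The uniform $H^1$-bound yields, up to a subsequence, a strong $L^2$-limit $v_{\star}$, necessarily satisfying $\nabla v_{\star} \equiv 0$ and hence equal to the positive constant $|\mathbb{T}^d|^{-1/2}$. Strong $L^2$-convergence on the torus implies strong $L^1$-convergence; for the regime $r > 1$ (where $\|\cdot\|_{L^{1/r}}$ is only a quasi-norm) Jensen's inequality for the concave map $x \mapsto x^{1/r}$ gives $\|v_n - v_{\star}\|_{L^{1/r}} \le |\mathbb{T}^d|^{r-1}\|v_n - v_{\star}\|_{L^1} \to 0$, while for $r \in [1/2,1]$ the inclusion $L^2 \hookrightarrow L^{1/r}$ is direct. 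In either case $\|v_{\star}\|_{L^{1/r}} = 0$, contradicting $v_{\star} > 0$.

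Granted the pointwise inequality, the remaining step is purely algebraic: apply it with $v = |u(\omega,t,\cdot)|^r$, integrate over $t \in [0,T]$, take expectation, and use Fubini together with the elementary bound $\int_0^T \mathbb{E} \|u(t)\|_{L^1_x}^{2r}\,dt \le T \, \esssup_{t \in [0,T]} \mathbb{E} \|u(t)\|_{L^1_x}^{2r}$ to position the essential supremum outside the expectation as stated in \eqref{Poincare_lemma_induction}.

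The only real subtlety is the $r > 1$ regime: then $1/r < 1$, so one cannot bound $\|v\|_{L^1}$ by $\|v\|_{L^{1/r}}$ and the naive ``Poincar{\'e} $+\,|\bar{v}|$'' route fails. The compactness argument bypasses this obstacle uniformly for all $r \ge 1/2$. The upper bound $r < \tfrac{m-1}{2} + \alpha$ plays no role in the proof of this lemma; it reflects only the range of $r$ over which Lemma \ref{control_kinetic_measure_v_large_new} will subsequently provide the dissipation control needed to make the right-hand side of \eqref{Poincare_lemma_induction} finite.
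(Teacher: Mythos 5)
Your proof is correct and takes essentially the same route as the paper: reduce \eqref{Poincare_lemma_induction} to a pointwise-in-$(\omega,t)$ Poincar\'e-type bound for $v=|u|^r$ on $\mathbb{T}^d$, establish that bound by contradiction using Rellich--Kondrachov compactness (this is exactly Lemma~\ref{Control_higher_powers_Lp_appendix}), then integrate and pull out the time essential supremum. The only difference is in a sub-step of the compactness argument --- the paper extracts an a.e.-convergent subsequence and applies Fatou's lemma to the $L^1$ norm, whereas you use Jensen's inequality (for $r>1$) together with the quasi-triangle inequality in $L^{1/r}$ --- but these are interchangeable ways of settling the same point.
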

\begin{proof}
We prove in Lemma \ref{Control_higher_powers_Lp_appendix} (Appendix \ref{Appendix_Poincare}) the following version of Poincar{\'e} inequality  for fixed $\omega \in \Omega,t \in [0,T]$ and any $\underline{C}=\underline{C}(m,\alpha,d) > 0$,
\begin{equation} \label{Poincare_x_fix}
\int_{\mathbb{T}^d} \abs{u(t,x)}^{2r}  \ dx  \le \underline{C} \left[  \left(\int_{\mathbb{T}^d} \abs{u(t,x)}  dx \right)^{2r}  + \int_{\mathbb{T}^d} (\nabla (u(t,x))^r )^2  \ dx    \right].
\end{equation}
Using \eqref{Poincare_x_fix} and H{\"o}lder's inequality, we have
\begin{equation*}
		\begin{split}
&	\mathbb{E} \int_0^T \int_{\mathbb{T}^d} \abs{u(t,x)}^{2r} \ dx \ dt
	\end{split}	
\end{equation*}
\begin{equation*}
	\begin{split}
  & \le \underline{C} \mathbb{E}  \int_0^T  \left[  \left(\int_{\mathbb{T}^d} \abs{u(t,x)}  dx \right)^{2r}  + \int_{\mathbb{T}^d} (\nabla (u(t,x))^r )^2  \ dx    \right] dt
	\\ & \le  \underline{C} T  \esssup_{t \in [0,T]}  \mathbb{E} \left(\int_{\mathbb{T}^d} \abs{u(t,x)}  dx \right)^{2r}   +   \underline{C} \mathbb{E}  \int_0^T \int_{\mathbb{T}^d} (\nabla (u(t,x))^r )^2  \ dx   \ dt.
	\end{split}	
\end{equation*}
\end{proof}
For the rest of the discussion, we write the kinetic form of \eqref{degenerate_parabolic_hyperbolic_SPDE} as follows:
\begin{equation} \label{kinetic_form_Setting_q}
	\partial_t \chi  - m \abs{v}^{m-1} \Delta_x \chi = \sum_{k=1}^{\infty} \delta_{u=v} g_k \dot{\beta}_k  +  \partial_v q,
\end{equation}
where $q := n - \frac{1}{2} G^2 \delta_{u=v}$. 
		\begin{proof}[Proof of Theorem \ref{Main_theorem}]
				Let $\chi$ be the kinetic function
			corresponding to $u$ and solving \eqref{kinetic_form_Setting_q}.
Let $\Psi_0 \in C^{\infty}_c(\mathbb{R}_v)$ supported in the ball $B_2(0)$ such that $\Psi_0=1$ in  $B_1(0)$ and $\Psi_1:=1- \Psi_0$. We consider the following decompositions for small and large velocities
			\begin{equation} \label{first_decomposition_final_theorem}
				\chi = \chi\Psi_0 + \chi \Psi_1  =: \chi^{<} + \chi^{>}, \quad \text{and} \quad  q = q \Psi_0 + q \Psi_1  =: q^{<} + q^{>}.
			\end{equation}
			Then, we can write 
			\begin{equation}  \label{second_decomposition_final_theorem}
					u=: u^{<} + u^{>} =  \int_v \chi^{<} dv +  \int_v \chi^{>} dv.
			\end{equation}
In order to apply Lemma \ref{Isotropic_Averaging_Lemma}, we introduce a cut-off in time in the kinetic form \eqref{kinetic_form_Setting_q}. Let $N >0$ and $\phi \in C^{\infty}(\mathbb{R}_t)$ such that $ \phi  = 1$ for $t \in \left( \frac{1}{N}, T - \frac{1}{N} \right)$ and $ \phi = 0 $ for $t \notin \left( 0, T \right)$. Multiplying \eqref{kinetic_form_Setting_q} by $ \Psi_0$, $\Psi_1$ and $\phi$ we obtain two equations
			\begin{equation} \label{kinetic_formulation_small_v_final}
				\begin{split}
				\partial_t  (\chi^{<} \phi)   - m \abs{v}^{m-1} \Delta  (\chi^{<} \phi) & =  \sum_{k=1}^{\infty}    \delta_{u=v} g_k \Psi_0  \phi \dot{\beta}_k   \\ &  + \chi^{<} \partial_t \phi -  q \phi \partial_v \Psi_0  + \partial_v  (q^{<} \phi),
				\end{split}
			\end{equation}
		  	\begin{equation}
		  		\label{kinetic_formulation_large_v_final}
		  		\begin{split}
				\partial_t  (\chi^{>}  \phi)  - m \abs{v}^{m-1} \Delta ( \chi^{>}  \phi) & =  \sum_{k=1}^{\infty}     \delta_{u=v}  g_k  \Psi_1  \phi \dot{\beta}_k \\ &  + \chi^{>} \partial_t \phi  +  q \phi \partial_v \Psi_0+ \partial_v (q^{>} \phi). 
					\end{split}
				\end{equation}
			The proof is then divided into three steps, where in the first two we derive the estimates for $u^{<}$ using \eqref{kinetic_formulation_small_v_final} and $u^{>}$ using \eqref{kinetic_formulation_large_v_final}. The estimates for $u$ follow by combining those for  $u^{<}$  and $u^{>}$ in the final step.	
			\paragraph{Step 1} We treat $u^{<}$. Let $ \mathring{g}_k= g_k \Psi_0 \phi$,   $	h  :=   \chi^{<} \partial_t \phi  - q \phi \partial_v \Psi_0$ and $\tilde{h}  :=  q^{<} \phi $ in \eqref{kinetic_formulation_small_v_final}.
We apply Lemma \ref{Isotropic_Averaging_Lemma} to \eqref{kinetic_formulation_small_v_final} choosing $ \gamma, \rho  \in (0,1)$  close to one, $\nu \in (\frac{1}{2}, \frac{m}{2})$, $ \epsilon \in (0,1)$ and $\varpi = \frac{ \varkappa}{\varkappa -1} >1$ small enough such that $\gamma^{\star} <1 $ in \eqref{definition_gamma_star_statement} and $  \kappa_x= \frac{2}{m}$, $p=m$ in \eqref{reg_time_results_before_bootstrap}. Using the embedding contained in Lemma \ref{Embedding_time_spaces}, the estimate \eqref{Bounds_Averaging_lemma_statement} becomes
	\begin{equation} \label{small_velocities_final_first}
	\begin{split}
	&	\| u^{<} \phi  \|_{L^{m}_{\omega,t}     W^{\sigma_x,m}_x}  \\  & \lesssim \| \chi^{<} \phi  \|_{ L^{\beta}_{\omega,t,x,v}} +   \left(  \sum_{k=1}^{\infty}     \| g_k(u) \Psi_0(u)  \phi \nabla  u^{\frac{m+1}{2}- \gamma^{\star} }   \|_{L^2_{\omega,t,x}}^2 \right)^{\frac{1}{2}} 
	\\  & + \left(  \sum_{k=1}^{\infty}     \| u^{\frac{m+1}{2}- \gamma^{\star} }  \nabla (g_k(u)  \Psi_0(u) \phi )  \|_{L^2_{\omega,t,x}}^2 \right)^{\frac{1}{2}}  \\ &  +    \left( \sum_{k=1}^{\infty}  \esssup_{t \in [0,T]}    \left \|   (g_k(u) \Psi_0(u) \phi(t))^2  \right \|_{L^1_{\omega,x}}^{\frac{1}{\varpi}}   \left \|    u^{\varkappa(2 \nu - 1)}  (g_k(u)\Psi_0(u) \phi)^2   \right \|_{L^1_{\omega,t,x}}^{\frac{1}{\varkappa}} \right)^{\frac{1}{2}} 	  \\ &   + \| \abs{v}^{1- \gamma}  h  \|_{L^1_{\omega} \mathcal{M}_{\text{TV}}} + \| \abs{v}^{- \gamma} \tilde{h}  \|_{L^1_{\omega} \mathcal{M}_{\text{TV}}} + \| u^{<} \phi \|_{L^1_{\omega,t,x}}.  
		\end{split}
	\end{equation}
 Since $ \chi^{<} \phi \in L^{\beta}_{\omega,t,x,v}$ has norm bounded by 1 (for $ \beta$ large) and $\abs{u}$ can be estimated by a constant on the support of $\Psi_0$, we estimate the first and the last term on the right-hand side in \eqref{small_velocities_final_first} as follows
	\begin{equation*}
		\| \chi^{<} \phi  \|_{ L^{\beta}_{\omega,t,x,v}}  + \| u^{<} \phi \|_{L^1_{\omega,t,x}}   \lesssim 1.
	\end{equation*}
Using Assumption \ref{Assumption_diffusion_coefficient} and Lemma \ref{control_kinetic_measure_v_large_new} for the second and third term on the right-hand side in \eqref{small_velocities_final_first} lead to

\begin{equation*}
	\begin{split}
& \left(  \sum_{k=1}^{\infty}     \| g_k(u) \Psi_0(u)  \phi  \nabla  u^{\frac{m+1}{2}- \gamma^{\star} }   \|_{L^2_{\omega,t,x}}^2 \right)^{\frac{1}{2}}  +  \left(  \sum_{k=1}^{\infty}     \| u^{\frac{m+1}{2}- \gamma^{\star} }  \nabla (g_k(u)  \Psi_0(u) \phi)   \|_{L^2_{\omega,t,x}}^2 \right)^{\frac{1}{2}}  \\ & \lesssim  	\| \Psi_0(u)  \phi  \nabla u^{\frac{m}{2}+1- \gamma^{\star}}   \|_{L^2_{\omega,t,x}} \lesssim \| u_0 \|_{L^1}^{\frac{1}{2}} +1.
 \end{split}
\end{equation*} 
The fourth term on the right-hand side in \eqref{small_velocities_final_first} is controlled using Assumption \ref{Assumption_diffusion_coefficient} estimating  $ \abs{u}$ and $ \abs{u}^{ \varkappa(2 \nu- 1)+1}$  by a constant on the support of $ \Psi_0$ 
	\begin{equation*}
		\begin{split}
 \left( \sum_{k=1}^{\infty}  \esssup_{t \in [0,T]}    \left \|   (g_k(u) \Psi_0(u) \phi(t))^2  \right \|_{L^1_{\omega,x}}^{\frac{1}{\varpi}}   \left \|    u^{\varkappa(2 \nu - 1)}  (g_k(u) \Psi_0(u) \phi)^2   \right \|_{L^1_{\omega,t,x}}^{\frac{1}{\varkappa}} \right)^{\frac{1}{2}} 	   \lesssim 1.
		\end{split}
	\end{equation*}
	Next, we check that $ \abs{v}^{1- \gamma} h  \in L^1_{\omega} \mathcal{M}_{TV}$. Since $ \abs{v}^{1- \gamma}$ can be estimated by a constant on the support of $ \Psi_0$ and $ \partial_v \Psi_0$, we integrate with respect to the variable $v$ and apply  Lemma  \ref{control_kinetic_measure_v_large_new} to obtain
	\begin{equation*}
		\begin{split}
		\| \abs{v}^{1- \gamma}  h  \|_{L^1_{\omega} \mathcal{M}_{\text{TV}}} & = \|  \abs{v}^{1- \gamma} \left( \chi^{<} \partial_t \phi - \phi q \partial_v \Psi_0 \right) \|_{L^1_{\omega} \mathcal{M}_{\text{TV}}}  \\  & \lesssim \| \chi \partial_t \phi \|_{L^1_{\omega,t,x,v} } +  \| \phi\abs{v}^{1- \gamma}  q   \partial_v \Psi_0 \|_{L^1_{\omega} \mathcal{M}_{\text{TV}}} \\   & \lesssim \| \partial_t \phi \abs{u} \|_{L^1_{\omega,t,x}} +  \|u_0 \|_{L^1}  +1.  
		\end{split}
	\end{equation*} 
	Using Lemma \ref{control_kinetic_measure_v_large_new}, we have
	\begin{equation*}
		\| \abs{v}^{- \gamma} \tilde{h}  \|_{L^1_{\omega} \mathcal{M}_{\text{TV}}} = \| \abs{v}^{- \gamma} q^{<} \phi  \|_{L^1_{\omega} \mathcal{M}_{\text{TV}}} \lesssim \|u_0 \|_{L^1} +1.
	\end{equation*}
	Then in view of the estimates above, \eqref{small_velocities_final_first} becomes
	\begin{equation} \label{small_velocities_final}
			\| u^{<} \phi  \|_{L^{m}_{\omega,t}     W^{\sigma_x,m}_x}  \lesssim   \| \partial_t \phi \abs{u} \|_{L^1_{\omega,t,x}}  +    \| u_0 \|_{L^1} +1.
	\end{equation}
\paragraph{Step 2} We treat $u^{>}$. Let $\mathring{g}_k= g_k \Psi_1 \phi$,  $	h  :=    \chi^{>}  \partial_t \phi + q  \partial_v \Psi_0 $ and $ 
\tilde{h} :=  q^{>} \phi $ in \eqref{kinetic_formulation_large_v_final}. 
 We apply Lemma \ref{Isotropic_Averaging_Lemma} to \eqref{kinetic_formulation_large_v_final} with $\rho \in (0,1)$, $ \gamma >1 $ chosen close to one, $\nu \in (\frac{1}{2}, \frac{m}{2}), \epsilon \in (0,1)$ and $\varpi = \frac{ \varkappa}{\varkappa -1} >1$ small enough such that $\gamma^{\star} >1 $ close to one in \eqref{definition_gamma_star_statement} and $  \kappa_x= \frac{2}{m}$, $p=m$ in \eqref{reg_time_results_before_bootstrap}.  Using the embedding contained in Lemma \ref{Embedding_time_spaces}, the estimate \eqref{Bounds_Averaging_lemma_statement} becomes
\begin{equation} \label{large_velocities_final_first}
	\begin{split}
&	\| u^{>} \phi  \|_{L^{m}_{\omega,t}     W^{\sigma_x,m}_x}  \\ & \lesssim   \| \chi^{>} \phi  \|_{ L^{\beta}_{\omega,t,x,v}} +   \left(  \sum_{k=1}^{\infty}     \| g_k(u) \Psi_1(u) \phi   \nabla  u^{\frac{m+1}{2}- \gamma^{\star} }   \|_{L^2_{\omega,t,x}}^2 \right)^{\frac{1}{2}} 
 \\ & +   \left(  \sum_{k=1}^{\infty}     \| u^{\frac{m+1}{2}- \gamma^{\star} }    \nabla (g_k(u)   \Psi_1(u) \phi )   \|_{L^2_{\omega,t,x}}^2 \right)^{\frac{1}{2}}  \\ & + \left( \sum_{k=1}^{\infty}     \esssup_{t \in [0,T]} \left \|   ( g_k(u) \Psi_1(u) \phi(t))^2  \right \|_{L^1_{\omega,x}}^{\frac{1}{\varpi}}       \left \|     u^{\varkappa(2 \nu - 1)}  (g_k(u) \Psi_1(u) \phi)^2 \right \|_{L^1_{\omega,t,x}}^{\frac{1}{\varkappa}} \right)^{\frac{1}{2}} \\ &  + \|  \abs{v}^{1- \gamma} h  \|_{L^1_{\omega} \mathcal{M}_{\text{TV}}}     + \| \abs{v}^{- \gamma} \tilde{h}  \|_{L^1_{\omega} \mathcal{M}_{\text{TV}}} + \| u^{>} \phi \|_{L^1_{\omega,t,x}} . 
	\end{split}
\end{equation}
 Since $ \chi^{>} \phi \in L^{\beta}_{\omega,t,x,v}$ has norm bounded by 1 (for  $ \beta$ large) and using Lemma \ref{control_apriori_estimate_new}, we estimate the first and the last term on the right-hand side in \eqref{large_velocities_final_first} 
\begin{equation*}
	\| \chi^{>} \phi  \|_{ L^{\beta}_{\omega,t,x,v}}  + \| u^{>} \phi \|_{L^1_{\omega,t,x}}   \lesssim 1+ \| u_0 \|_{L^1}.
\end{equation*}
Recall that $\alpha \in [\frac{1}{2},1]$. The second and third term on the right-hand side in \eqref{large_velocities_final_first} are estimated using Assumption \ref{Assumption_diffusion_coefficient} and Lemma  \ref{control_kinetic_measure_v_large_new} 
\begin{equation} \label{estimate_final_proof_large_space_10}
\begin{split}
& \left(  \sum_{k=1}^{\infty}     \| g_k(u) \Psi_1(u) \phi  \nabla  u^{\frac{m+1}{2}- \gamma^{\star} }   \|_{L^2_{\omega,t,x}}^2 \right)^{\frac{1}{2}} +  \left(  \sum_{k=1}^{\infty}     \| u^{\frac{m+1}{2}- \gamma^{\star} }    \nabla (g_k(u)   \Psi_1(u) \phi )   \|_{L^2_{\omega,t,x}}^2 \right)^{\frac{1}{2}}  \\ & \lesssim	\| \Psi_1(u)  \phi \nabla  u ^{\frac{m+1}{2}  - \gamma^{\star} + \alpha}   \|_{L^2_{\omega,t,x}} \lesssim \| u_0 \|_{L^{2 \alpha }}^{   \alpha } +1.
	\end{split}
\end{equation}
By the assumptions above, $\varkappa $ is large and $\nu $ is close to $\frac{1}{2}$. Let $ r= \alpha + \varkappa(\nu -\frac{1}{2}) < \frac{m-1}{2} + \alpha$.  The fourth term on the right-hand side in \eqref{large_velocities_final_first} is estimated using Assumption \ref{Assumption_diffusion_coefficient}, Lemma \ref{control_apriori_estimate_new}, Lemma \ref{Control_higher_powers_Lp}, Lemma \ref{control_kinetic_measure_v_large_new}, $L^{2\alpha}(\mathbb{T}^d) \hookrightarrow L^1(\mathbb{T}^d)$ and $r \le 2 \alpha$ as follows 
\begin{equation}  \label{Estimate_large_space_reg_used_in_time_reg_final_proof}
	\begin{split}
&		\left( \sum_{k=1}^{\infty}     \esssup_{t \in [0,T]} \left \|   ( g_k(u) \Psi_1(u) \phi(t))^2  \right \|_{L^1_{\omega,x}}^{\frac{1}{\varpi}}       \left \|     u^{\varkappa(2 \nu - 1)}  (g_k(u) \Psi_1(u) \phi)^2 \right \|_{L^1_{\omega,t,x}}^{\frac{1}{\varkappa}} \right)^{\frac{1}{2}}
  \\ & \lesssim  \left( \esssup_{t \in [0,T]} \mathbb{E}   \int_x   \abs{ u(t,x)}^{2 \alpha}  \abs{ \Psi_1(u)  \phi(t)}^2    \ dx       \right)^{\frac{1}{2 \varpi}} \left( \mathbb{E} \int_{t,x}    \abs{ u(t,x)}^{2 \alpha +\varkappa(2 \nu- 1)}  \abs{ \Psi_1(u)  \phi(t)}^2   \ dx      \ dt \right)^{\frac{1}{2 \varkappa}}  \\ & \lesssim (\| u_0 \|_{L^{2 \alpha}}^{\frac{\alpha}{\varpi}}+1) \left( \mathbb{E}	\|    u  \phi   \|^{2r}_{L^{2r}_{t,x}} \right)^{\frac{1}{2\varkappa}} 
   \\ &   \lesssim (\| u_0 \|_{L^{2 \alpha}}^{\frac{\alpha}{\varpi}}+1)  \left[ \left( \esssup_{t \in [0,T]} \mathbb{E}  	\|  u(t)  \|_{L^{1}( \mathbb{T}^d)}^{2r}  \right)^{\frac{1}{2\varkappa}}   +  \left( \mathbb{E} \int_0^T \int_{\mathbb{T}^d} \abs{ \nabla (u(t,x))^{r}}^{2} \ dx \ dt   \right)^{\frac{1}{2\varkappa}} \right]
  \\ &   \lesssim (\| u_0 \|_{L^{2 \alpha}}^{\frac{\alpha}{\varpi}}+1) (\| u_0 \|_{L^1}^{\frac{r}{\varkappa}} + \|u_0 \|_{L^{2 \alpha}}^{\frac{\alpha}{\varkappa}}+1)
\\ &  \lesssim  \| u_0 \|_{L^{2 \alpha}}^{\frac{\alpha}{\varpi}} \| u_0 \|_{L^1}^{\frac{r}{\varkappa}} +  \|u_0 \|_{L^{2 \alpha}}^{\alpha} + \| u_0 \|_{L^1}^{\frac{r}{\varkappa}} +1
\\ & \lesssim \| u_0 \|_{L^{2 \alpha}}^{\alpha(\frac{1}{\varpi}+ \frac{1}{\varkappa}) + \frac{\alpha}{\varkappa}} +  \|u_0 \|_{L^{2 \alpha}}^{\alpha} + \| u_0 \|_{L^1}^{\frac{r}{\varkappa}} +1
   \lesssim  \| u_0 \|_{L^{2 \alpha}}^{2 \alpha}+1.
	\end{split}
\end{equation}
Next, we check that $  \abs{v}^{1- \gamma} h  \in L^1_{\omega} \mathcal{M}_{TV}$. Since $ \abs{v}^{1- \gamma}$ can be estimated by a constant on the support of $ \Psi_1$ and $ \partial_v  \Psi_0$, we apply Lemma  \ref{control_kinetic_measure_v_large_new} to
\begin{equation*}
	\begin{split}
	\| \abs{v}^{1 - \gamma } h  \|_{L^1_{\omega} \mathcal{M}_{\text{TV}}} & = \| \abs{v}^{1 - \gamma }(  \chi^{>} \partial_t \phi + \phi q \partial_v \Psi_0) \|_{L^1_{\omega} \mathcal{M}_{\text{TV}}}  \\ & \lesssim \| \partial_t \phi \abs{u} \|_{L^1_{\omega,t,x}} +  \| u_0 \|_{L^{1}} +1  .
	\end{split}
\end{equation*} 
Using again Lemma \ref{control_kinetic_measure_v_large_new}, we have
\begin{equation*}
	\| \abs{v}^{- \gamma} \tilde{h}  \|_{L^1_{\omega} \mathcal{M}_{\text{TV}}} = \|   \abs{v}^{- \gamma} q^{>} \phi  \|_{L^1_{\omega} \mathcal{M}_{\text{TV}}} \lesssim \| u_0\|_{L^{2 \alpha}}^{  2 \alpha}   +1.
\end{equation*}
	Then in view of the estimates above, \eqref{large_velocities_final_first} becomes
\begin{equation}  \label{large_velocities_final}
	\| u^{>} \phi  \|_{L^{m}_{\omega,t}     W^{\sigma_x,m}_x}  \lesssim \| \partial_t \phi \abs{u} \|_{L^1_{\omega,t,x}} +   \| u_0 \|_{L^{2 \alpha}}^{  2 \alpha}    +  1.
\end{equation}
\paragraph{Conclusion} 
We may set $\phi_N(t)= \psi(Nt) - \psi(Nt-NT+T)$, where $ \psi \in C^{\infty}(\mathbb{R})$ with $ 0 \le \psi \le 1$, $ \text{supp} \ \psi \subset (0, \infty)$, $ \psi(t)=1$ for $t > T$ and $ \| \partial_t \psi \|_{L^1} =1$. For $N \rightarrow \infty$, $\phi_N u \rightarrow u \mathbbm{1}_{[0,T]}$ in the sense of distributions, while $ \partial_t \phi_N $ is a smooth approximation of $ \delta_{\{t=0 \}} - \delta_{ \{t=T \}}$. Using \eqref{small_velocities_final} and \eqref{large_velocities_final}, we have
\begin{equation*}
	\begin{split}
		\sup_{N \in \mathbb{N}} \|  u_N \|_{L^{m}_{\omega,t}     W^{\sigma_x,m}_x} & \lesssim \sup_{N \in \mathbb{N}} \|  u^{<}_N \|_{L^{m}_{\omega,t}     W^{\sigma_x,m}_x} + \sup_{N \in \mathbb{N}} \|  u^{>}_N \|_{L^{m}_{\omega,t}     W^{\sigma_x, m}_x}  \\ & \lesssim 	\sup_{N \in \mathbb{N}}  \|  \abs{u}   \partial_t \phi_N \|_{L^1_{\omega,t,x} }  + \| u_0 \|_{L^{2 \alpha}}^{  2 \alpha}    +  1
		\\ & \lesssim 	\sup_{N \in \mathbb{N}} \esssup_{t \in [0,T]} \mathbb{E} \| u(t) \|_{L^1_x} \int_t \abs{ \partial_t \phi_N } \ dt + \| u_0 \|_{L^{2 \alpha}}^{  2 \alpha}    +  1.
	\end{split}
\end{equation*}
Sending $N \rightarrow \infty$ and using the weak lower semicontinuity of the norm in
 
\noindent $L^{m} \left(\Omega; L^{m}(0,T;  W^{\sigma_x,m}( \mathbb{T}^d) \right)$, we obtain \eqref{Main_theorem_estimate_statement_small_velocity}.
		\end{proof}
	\begin{proof}[Proof of Theorem \ref{Main_theorem_time}] The proof is similar to the one of Theorem \ref{Main_theorem}. Let $\chi$ be the kinetic function
	corresponding to $u$ and solving \eqref{kinetic_form_Setting_q}.  Let $\Psi_0$, $\Psi_1$, $\phi$, $\chi^{<}$, $\chi^{>}$, $q^{<}$, $q^{>}$, $u^{<}$ and $u^{>}$   be as in the proof of Theorem \ref{Main_theorem}. As above, we multiply \eqref{kinetic_form_Setting_q} by $ \Psi_0$, $\Psi_1$ and $\phi$ and we obtain \eqref{kinetic_formulation_small_v_final} and  \eqref{kinetic_formulation_large_v_final}. 
Again, we derive the estimates for $u^{<}$ using \eqref{kinetic_formulation_small_v_final} and $u^{>}$ using \eqref{kinetic_formulation_large_v_final} separately.	
	\paragraph{Step 1} We treat $u^{<}$. Let $ \mathring{g}_k := g_k \Psi_0 \phi$, $h  :=  \chi^{<} \partial_t \phi  - q \phi \partial_v \Psi_0$ and $	\tilde{h}  := \phi  q^{<} $. We apply Lemma \ref{Time_Averaging_Lemma} Part \textit{(i)} to \eqref{kinetic_formulation_small_v_final} choosing $ \gamma , \rho \in (0,1)$ close to one, $ \epsilon \in (0,1)$ and $\varpi = \frac{ \varkappa}{\varkappa -1}>1$ small enough  such that $p=1$ in \eqref{regularity_results_before_bootstrap_time_integrability},  $q=2$, $ \bar{p} \in (p,q) $ in \eqref{p_bar_q_statement_time_averaging_lemma} and $ \kappa_t= \frac{1}{2}-$  in \eqref{regularity_results_before_bootstrap_time}. Using the embedding contained in Lemma \ref{Embedding_time_spaces} in the estimate \eqref{Bounds_Averaging_lemma_statement_time}, we arrive at 
	\begin{equation} \label{small_velocities_final_first_time}
		\begin{split}
 \| u^{<} \phi \|_{L^{1}_{\omega} W^{\sigma_t,2}_t L^{1}_x}  & \lesssim     \| \chi^{<} \phi  \|_{ L^{\beta}_{\omega,t,x,v}} + \left( \sum_{k=1}^{\infty} \|      u^{1 - \gamma}   g_k(u) \Psi_0(u) \phi    \|_{L^2_{\omega,t,x}}^2 \right)^{\frac{1}{2}}  
  \\  &  +  \left( \sum_{k=1}^{\infty}    \esssup_{t \in [0,T]} \left \|   ( g_k(u) \Psi_0(u) \phi(t))^2  \right \|_{L^1_{\omega,x}}^{\frac{1}{\varpi}}      \left \|    u^{2 \varkappa(1- \gamma)}  (g_k(u) \Psi_0(u) \phi)^2  \right \|_{L^1_{\omega,t,x}}^{\frac{1}{\varkappa}} \right)^{\frac{1}{2}}  \\  &  +\| \abs{v}^{1- \gamma}  h \|_{L^1_{\omega} \mathcal{M}_{\text{TV}}}  + \| \abs{v}^{- \gamma} \tilde{h}  \|_{L^1_{\omega} \mathcal{M}_{\text{TV}}}     + \| u^{<} \phi  \|_{L^{\bar{p}}_{\omega} L^q_t L^p_x}. 
\end{split}
	\end{equation}
The first, third, fourth and fifth term on the right-hand side above are estimated as in Step 1 in the proof of Theorem \ref{Main_theorem}. The second term on the right-hand side in \eqref{small_velocities_final_first_time} is treated using Assumption \ref{Assumption_diffusion_coefficient} and estimating $ \abs{u}^{3 - 2 \gamma}$ by a constant on the support of $ \Psi_0$ 
	\begin{equation*}
		\begin{split}
&	\left( \sum_{k=1}^{\infty} \mathbb{E} \int_{t,x}   \abs{u(t,x)}^{2(1  -\gamma)}     \abs{g_k(x,u(t,x)) \Psi_0(u(t,x)) \phi(t)}^2     \ dx      \ dt \right)^{\frac{1}{2}}   \lesssim 1.
		\end{split}
\end{equation*}
The sixth term on the right-hand side in \eqref{small_velocities_final_first_time} is treated estimating $ \abs{u}$ by a constant on the support of $ \Psi_0$  as follows
\begin{equation*}
	\| u^{<} \phi  \|_{L^{\bar{p}}_{\omega} L^q_t L^p_x} \lesssim 	    1.
\end{equation*}
	Then in view of the arguments above, \eqref{small_velocities_final_first_time} becomes
	\begin{equation} \label{small_velocities_final_time}
		\| u^{<} \phi  \|_{L^{1}_{\omega}     W^{\sigma_t,2}_t L^{1}_x} \lesssim  \| \partial_t \phi \abs{u} \|_{L^1_{\omega,t,x}} +   \|u_0 \|_{L^{1}} +1.
	\end{equation}
	
		\paragraph{Step 2} We treat $u^{>}$. Let $ \mathring{g}_k := g_k \Psi_1 \phi$, $h  := \chi^{>}  \partial_t \phi + q  \partial_v \Psi_0$ and $\tilde{h}  :=  q^{>} \phi $.	We apply Lemma \ref{Time_Averaging_Lemma} Part \textit{(ii)} to \eqref{kinetic_formulation_large_v_final} with $\epsilon, \bar{\epsilon} \in (0,1)$ small enough, $\rho \in (0,1) $, $\tilde{\gamma} \in (1,1+ \bar{\epsilon}(m-1))$ close to one, $ \zeta \in (0, \frac{1}{2} -\bar{\epsilon}), \nu \in (\frac{1}{2}, \infty)$ close to one half and $\varpi = \frac{ \varkappa}{\varkappa -1}>1$ small enough such that 
$ \kappa_t := \zeta - \epsilon= \frac{1}{2}-$. Then using the embedding contained in Lemma \ref{Embedding_time_spaces}  in the estimate \eqref{final_cut_off_time_large_statement}, we arrive at 
	\begin{equation} \label{large_velocities_final_first_time}
		\begin{split}
	&	\| u^{>}  \phi \|_{L^{1}_{\omega}     W^{\sigma_t,2}_t L^{1}_x}  \\ &  \lesssim       \left( \sum_{k=1}^{\infty} \|      u^{\nu - \frac{1}{2}}   g_k(u) \Psi_1(u) \phi  \|_{L^2_{\omega,t,x}}^2 \right)^{\frac{1}{2}}    \\ & + \left( \sum_{k=1}^{\infty}     \esssup_{t \in [0,T]} \left \|   ( g_k(u) \Psi_1(u) \phi(t))^2  \right \|_{L^1_{\omega,x}}^{\frac{1}{\varpi}}       \left \|     u^{\varkappa(2 \nu - 1)}  (g_k(u) \Psi_1(u) \phi)^2 \right \|_{L^1_{\omega,t,x}}^{\frac{1}{\varkappa}} \right)^{\frac{1}{2}}  \\ & + \|   h  \|_{L^1_{\omega} \mathcal{M}_{\text{TV}}} + \| \abs{v}^{- \tilde{\gamma}} \tilde{h}  \|_{L^1_{\omega} \mathcal{M}_{\text{TV}}} +  \| u^{>} \phi \|_{L^1_{\omega} L^2_t L^1_x}. 
		\end{split}
	\end{equation}
The second and fourth term on the right-hand side above are estimated as in Step 2 of proof of Theorem \ref{Main_theorem}. Recall that $\alpha \in [\frac{1}{2},1]$. The first term on the right-hand side in \eqref{large_velocities_final_first_time} is treated using Assumption \ref{Assumption_diffusion_coefficient}, H{\"o}lder's inequality, Jensen's inequality and estimate \eqref{Estimate_large_space_reg_used_in_time_reg_final_proof}  in Step 2 of proof of Theorem \ref{Main_theorem} 
	\begin{equation*} 
	\begin{split}
		&	  \left( \sum_{k=1}^{\infty} \mathbb{E} \int_{t,x}  \abs{u(t,x)}^{2 \nu -1}    \abs{g_k(x,u(t,x)) \Psi_1(u(t,x)) \phi(t)}^2      \ dx      \ dt \right)^{\frac{1}{2}}  \\ & \lesssim   \left( \mathbb{E} \int_0^T \int_{x}   \abs{u(t,x)}^{ 2 \nu -1 + 2 \alpha (\frac{1}{\varpi} + \frac{1}{\varkappa})}        \ dx      \ dt \right)^{\frac{1}{2}} 
		\\ & \le \left( \int_0^T \mathbb{E}  \left( \int_{x}   \abs{u(t,x)}^{ 2 \alpha}        \ dx   \right)^{\frac{1}{\varpi}} \left( \int_x \abs{u(t,x)}^{\varkappa (2 \nu -1) + 2 \alpha}  \ dx \right)^{\frac{1}{\varkappa}} \ dt \right)^{\frac{1}{2}} 
	\\ & \le 	 \left( \esssup_{t \in [0,T]} \mathbb{E}   \int_x   \abs{ u(t,x)}^{2 \alpha}    \ dx       \right)^{\frac{1}{2 \varpi}} \left( \mathbb{E} \int_0^T \int_{x}    \abs{ u(t,x)}^{2 \alpha +\varkappa(2 \nu- 1)}   \ dx      \ dt \right)^{\frac{1}{2 \varkappa}}
		\\ & \lesssim   \| u_0 \|_{L^{2 \alpha }}^{ 2  \alpha }  + 1.
	\end{split}
\end{equation*}
Next, we check that $   h  \in L^1_{\omega} \mathcal{M}_{TV}$. We apply Lemma  \ref{control_kinetic_measure_v_large_new} to
\begin{equation*}
	\begin{split}
		\|  h  \|_{L^1_{\omega} \mathcal{M}_{\text{TV}}} & = \| 
		  \chi^{>} \partial_t \phi + \phi   q  \partial_v \Psi_0 \|_{L^1_{\omega} \mathcal{M}_{\text{TV}}} \lesssim \| \partial_t \phi \abs{u} \|_{L^1_{\omega,t,x}} +  \| u_0 \|_{L^{1}}  +1 .
	\end{split}
\end{equation*} 
The fifth term on the right-hand side in \eqref{large_velocities_final_first_time} is treated using Lemma \ref{control_apriori_estimate_new}
\begin{equation*}
	 \| u^{>} \phi \|_{L^1_{\omega} L^2_t L^1_x}  \lesssim  \| u_0 \|_{L^{1}}  +1 .
\end{equation*}
	Then in view of the arguments above, \eqref{large_velocities_final_first_time} becomes
	\begin{equation} \label{large_velocities_final_time}
		\|  u^{>}  \phi  \|_{L^{1}_{\omega}     W^{\sigma_t,2}_t L^{1}_x} \lesssim  \| \partial_t \phi \abs{u} \|_{L^1_{\omega,t,x}} + \| u_0 \|_{L^{2 \alpha}}^{  2 \alpha} +1.
	\end{equation}
\paragraph{Conclusion} Let $\phi_N$ and $u_N$ be as in the proof of Theorem \ref{Main_theorem}. Using \eqref{small_velocities_final_time} and \eqref{large_velocities_final_time}, we have
\begin{equation*}
	\begin{split}
		\sup_{N \in \mathbb{N}} \|  u_N \|_{L^{1}_{\omega} W^{\sigma_t,2}_t L^1_x} & \lesssim \sup_{N \in \mathbb{N}} \|  u^{<}_N \|_{L^{1}_{\omega} W^{\sigma_t,2}_t L^1_x} + \sup_{N \in \mathbb{N}} \|  u^{>}_N \|_{L^{1}_{\omega} W^{\sigma_t,2}_t L^1_x}  \\ & \lesssim  	\sup_{N \in \mathbb{N}} \|  \abs{u}   \partial_t \phi_N \|_{L^1_{\omega,t,x} }  + \| u_0 \|_{L^{2 \alpha}}^{  2 \alpha}    +  1
	\\ & \lesssim	\sup_{N \in \mathbb{N}} \esssup_{t \in [0,T]} \mathbb{E} \| u(t) \|_{L^1_x} \int_t \abs{ \partial_t \phi_N } \ dt  + \| u_0 \|_{L^{2 \alpha}}^{  2 \alpha}    +  1.
	\end{split}
\end{equation*}
Sending $N \rightarrow \infty$ and using the weak lower semicontinuity of the norm in

\noindent $ L^{1} \left(\Omega; W^{\sigma_t,2} (0,T; L^{1} ( \mathbb{T}^d) \right)$, we obtain \eqref{Main_theorem_estimate_statement_time_small_v}.
\end{proof}

\begin{proof}[Proof of Corollary \ref{Space_time_corollary}]
Let $\sigma_x$, $\tilde{\sigma}_x \in [0,\frac{2}{m})$ be such that $ \sigma_x < \tilde{\sigma}_x $ and let $	\sigma_t \in \left[0, \frac{1}{2} \right)$. From Theorem \ref{Main_theorem} and Theorem \ref{Main_theorem_time}, we know that
\begin{equation*}
	u \in L^{ m}(\Omega; L^{ m}(0,T;W^{\tilde{\sigma}_x,  m}(\mathbb{T}^d))) \ \cap L^1(\Omega; W^{ \sigma_t,2}(0,T;L^{1}(\mathbb{T}^d))).
\end{equation*}
Recall that $\theta \in (0,1)$, $	\frac{1}{p}= \frac{1- \theta}{m} + \theta$ and $\frac{1}{q}= \frac{1-\theta}{m} + \frac{\theta}{2}$. We apply a series of real interpolation arguments below. We use  \cite[Theorem 1.3.3 (a)]{Triebel78Interpolation} in the first embedding, \cite[Theorem 1.18.4]{Triebel78Interpolation} in the first equality, \cite[Theorem 1.3.3 (d)]{Triebel78Interpolation} in the second embedding, \cite[Theorem 3.1]{amann2000compact} in the second and last equality, \cite[Theorem 3.4.1 (a), Corollary 3.8.2]{bergh2012interpolation} in the last embedding to obtain for all $\epsilon \in (0,1-\theta)$
\begin{equation*}
	\begin{split}
&	 L^{ m}(\Omega; L^{ m}(0,T;W^{\tilde{\sigma}_x,  m}(\mathbb{T}^d))) \ \cap L^1(\Omega; W^{ \sigma_t,2}(0,T;L^{1}(\mathbb{T}^d)))
	 \\ & \hookrightarrow \left( L^{ m}(\Omega; L^{ m}(0,T;W^{\tilde{\sigma}_x,  m}(\mathbb{T}^d))),  L^1(\Omega; W^{ \sigma_t,2}(0,T;L^{1}(\mathbb{T}^d))) \right)_{\theta,p}
	 \\ & = L^{ p} \left(\Omega; (  L^{ m}(0,T;W^{\tilde{\sigma}_x,  m}(\mathbb{T}^d))), W^{ \sigma_t,2}(0,T;L^{1}(\mathbb{T}^d)) )_{\theta,p} \right) 
	 \\ &  \hookrightarrow L^{ p} \left(\Omega; (  L^{ m}(0,T;W^{\tilde{\sigma}_x,  m}(\mathbb{T}^d))), W^{ \sigma_t,2}(0,T;L^{1}(\mathbb{T}^d)) )_{\theta,q} \right) 
	 \\ & = 	 L^{ p} \left(\Omega;   W^{\theta \sigma_t, q}(0,T;(W^{\tilde{\sigma}_x,  m}(\mathbb{T}^d), L^{1}(\mathbb{T}^d))_{\theta,q})  \right) 
	  	  \\ & \hookrightarrow 	 L^{ p} \left(\Omega;   W^{\theta \sigma_t, q}(0,T;(L^{1}(\mathbb{T}^d), W^{\tilde{\sigma}_x,  m}(\mathbb{T}^d))_{1-\theta - \epsilon,p})  \right) 
	  	   	  \\ & = 	 L^{ p} \left(\Omega;   W^{\theta \sigma_t, q}(0,T; W^{(1-\theta - \epsilon) \tilde{\sigma}_x,  p}(\mathbb{T}^d))  \right). 
	\end{split}
\end{equation*}
We choose $\epsilon$ small enough such that  $(1-\theta - \epsilon)\tilde{\sigma}_x=  (1-\theta)\sigma_x$. From the estimates \eqref{Main_theorem_estimate_statement_small_velocity} and \eqref{Main_theorem_estimate_statement_time_small_v} of Theorem \ref{Main_theorem} and Theorem \ref{Main_theorem_time}  and the embeddings above, we have
\begin{equation*}
	\begin{split}
	\| u \|_{L^{ p} \left(\Omega;   W^{\theta \sigma_t, q}(0,T; W^{(1-\theta ) \sigma_x,  p}(\mathbb{T}^d))  \right)} & \lesssim   ( \| u_0 \|_{ L^{2 \alpha}}^{  2 \alpha}  + 1)^{\theta}( \| u_0 \|_{ L^{2 \alpha}}^{  2 \alpha}  + 1)^{1-\theta} \\ &  \lesssim    \| u_0 \|_{ L^{2 \alpha}}^{  2 \alpha}  + 1.
	\end{split}
\end{equation*}
\end{proof}

	\appendix
	
	\section{$L^1$-based averaging techniques and the stochastic integral} \label{averaging_techniques_and_the_stochastic_integral_appendix}

In this section we show the incompatibility  of the $L^1$-based averaging techniques used in the deterministic setting (i.e. when $g_k=0$ in \eqref{degenerate_parabolic_hyperbolic_SPDE}) with the presence of stochastic integral. In \cite{gess2018well}, the stochastic forcing was treated by means of an extension of the deterministic $L^1_{t,x}$-based 
	techniques using the distributional inequality \eqref{ditribution_equality_noise_coefficients_introduction} in the kinetic form \eqref{kinetic_form_introduction_stochastic} as follows
		\begin{equation*}
			\begin{split}
\mathcal{L}(\partial_t, \nabla_x,v) \chi & :=		\partial_t \chi  - m \abs{v}^{m-1} \Delta_x \chi \\ &  =\sum_{k=1}^{\infty}  \chi \partial_v g_k \dot{\beta}_k  - \sum_{k=1}^{\infty} \partial_v (\chi g_k) \dot{\beta}_k + \partial_{v}(-\frac{1}{2}G^{2}\delta_{u=v}+n),
\end{split}
	\end{equation*}
This implies a representation of the solution $u$ in terms of 
		\begin{equation*}
	\begin{split}
u(t,x)=	\int_v \chi(t,x,v) \ dv &   = \int_v e^{-t \mathcal{L}( \nabla_x,v) } \chi(0,x,v) \ dv  \\ &  + \sum_{k=1}^{\infty} \int_v  \int_0^t e^{-(t-s) \mathcal{L}( \nabla_x,v) }  \chi(s,x,v) \partial_v g_k(x,v) d\beta_k(s) \ dv 	   \\ &  + \sum_{k=1}^{\infty} \int_v  \int_0^t e^{-(t-s) \mathcal{L}( \nabla_x,v) } \partial_v (-\chi(s,x,v) g_k(x,v)) d\beta_k(s) \ dv \\ &  + \int_v \frac{1}{\mathcal{L}(\partial_t, \nabla_x,v)}\partial_{v}(-\frac{1}{2}G^{2}\delta_{u=v}+n) \ dv ,
	\end{split}
\end{equation*}
where $\mathcal{L}( \nabla_x,v)$ is identified with the linear symbol $	\mathcal{L}( \xi, v ) :=  m \abs{v}^{m-1} 4 \pi^2 \abs{\xi}^2$.
The deterministic analysis for the regularity results in \cite{gess2017sobolev,gess2019optimal} rely on multiplier estimates parametrized in velocity, that is on estimates of the type for $ \tilde{\alpha} \in [0,1)$ and all $v \in \mathbb{R}$
\begin{equation} \label{velocity_multiplier_estimates_appendix}
	\|(-\Delta_x)^{\frac{\tilde{\alpha} }{2}}\mathcal{L}_{v}(\nabla_{x},v)(t-s)e^{-(t-s)\mathcal{L}(\nabla_{x},v)}\|_{L^{2}\to L^{2}}\le C(t-s,v,m, \tilde{\alpha}),
\end{equation}
where $\mathcal{L}_{v}(\nabla_{x},v)$ denotes the $v$-derivative of $\mathcal{L}(\nabla_{x},v)$.

We present an informal argument to show that the techniques used in the deterministic setting fail to produce optimal estimates in the case of a stochastic forcing term. Roughly speaking, the aim would be to derive an estimate on
\begin{equation} \label{estimate_appendix_deterministic}
	\mathbb{E} \| (-\Delta_x)^{\tilde{\alpha} } \sum_{k=1}^{\infty} \int_v  \int_{0}^{\cdot}e^{-(\cdot-s)\mathcal{L}(\nabla_{x},v)}\,\partial_{v} (\chi (s,\cdot,v) g_k(\cdot,v)) d\beta_{k}(s)  \ dv \|_{L_{t,x}^{1}},
\end{equation}	
for $\tilde{\alpha} $ as large as possible. As usual in the theory of stochastic PDEs \cite{da2014stochastic}, one may at best expect an improvement of spatial regularity of one order from the linear/nonlinear heat equation due to the non-vanishing quadratic variation of Brownian motion. Hence, there would be no hope to get anywhere near optimal estimates without using a \textit{priori} regularity of $\chi$.

One might then be tempted to try arguing via bootstrapping, as it is often done in the context of averaging lemmata for scalar conservation laws \cite{lions1994kinetic}. However, this technique is again incompatible with the quadratic structure of a stochastic integral. We sketch an informal estimate which highlights this issue with the stochastic integral. In the following, we assume that $\chi$ is compactly supported in $v$. Informally, the term \eqref{estimate_appendix_deterministic} can be treated using an integration by parts in $v$, Burkholder--Davis--Gundy inequality, Jensen's inequality and the multiplier estimate \eqref{velocity_multiplier_estimates_appendix}
\begin{equation} \label{estimate_appendix_deterministic_informal_argument}
	\begin{split}
&	\mathbb{E} \| (-\Delta_x)^{\tilde{\alpha}} \sum_{k=1}^{\infty} \int_v \int_{0}^{\cdot}e^{-(\cdot-s)\mathcal{L}(\nabla_{x},v)}\,\partial_{v} (\chi (s,\cdot,v) g_k(\cdot,v)) d\beta_{k}(s) \ dv  \|_{L_{t,x}^{1}} 
\\		& \lesssim \int_{0}^{T}\int_{x} \Bigg(\sum_{k=1}^{\infty}	\mathbb{E}\int_{0}^{t} \Bigg| \int_{v} (-\Delta_x)^{\tilde{\alpha}}\mathcal{L}_{v}(\nabla_{x},v)(t-s)e^{-(t-s)\mathcal{L}(\nabla_{x},v)} \\ & \times \chi(s,x,v) g_k(x,v) \ dv \Bigg|^{2}ds \Bigg)^{1/2} dx \ dt 
	 \\  & \le \Bigg(  \int_{0}^{T}  \sum_{k=1}^{\infty}	\mathbb{E}\int_{0}^{t} \int_{v}   \| (-\Delta_x)^{\frac{\tilde{\alpha}}{2}} \mathcal{L}_{v}(\nabla_{x},v)(t-s)e^{-(t-s)\mathcal{L}(\nabla_{x},v)} \\ & \times  (-\Delta_x)^{\frac{\tilde{\alpha}}{2}} \chi(s,\cdot,v) g_k(\cdot,v) \|_{L^2_x}^2   dv  \ ds \   dt \Bigg)^{1/2}
					\\   & \le \left( \int_{0}^{T} \sum_{k=1}^{\infty}  	\mathbb{E}\int_{0}^{t} \int_{v}   C(t-s,v,m, \tilde{\alpha})  \|  (-\Delta_x)^{\frac{\tilde{\alpha}}{2}} \chi(s,\cdot,v) g_k(\cdot,v) \|_{L^2_x}^2  \ dv  \ ds   \  dt \right)^{1/2}.
	\end{split}
\end{equation}	
The incompatibility of the usual $L_{t,x}^{1}$-based arguments with the $L_{t,x}^{2}$-based nature of the stochastic integral becomes apparent, since the term $(-\Delta_x)^{\frac{\tilde{\alpha}}{2}} \chi$  on the right-hand side of \eqref{estimate_appendix_deterministic_informal_argument} is not in $L^2_{x,v}$ when $\tilde{\alpha} \ge \frac{1}{2}$. Therefore, one cannot obtain more than one derivative with these bootstrapping arguments. For this reason, this approach cannot be used to prove the optimal regularity result of Theorem \ref{Main_theorem}.

	
	\section{Poincar{\'e} inequality} \label{Appendix_Poincare}
	
	\begin{lem} \label{Control_higher_powers_Lp_appendix}
		Let $x \in \mathbb{T}^d$ for $d \ge 1$. For any $r \ge \frac{1}{2} $, there is a constant $\tilde{C}=\tilde{C}(r,d) > 0$ such that
		\begin{equation} \label{Poincare_x_fix_appendix}
	\int_{\mathbb{T}^d} \abs{u(x)}^{2r}  \ dx  \le \tilde{C} \left[  \left(\int_{\mathbb{T}^d} \abs{u(x)}  dx \right)^{2r}  + \int_{\mathbb{T}^d} (\nabla (u(x))^r )^2  \ dx    \right].
\end{equation}
	\end{lem}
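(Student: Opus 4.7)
}
Since the statement involves only $|u|$, I may assume $u\ge 0$, and set $v := u^{r}$. Then $\|u\|_{L^{2r}}^{2r}=\|v\|_{L^{2}}^{2}$, so the target inequality rewrites as
\begin{equation*}
\|v\|_{L^{2}}^{2}\;\lesssim\;\bigl(\textstyle\int_{\mathbb{T}^{d}}u\,dx\bigr)^{2r}+\|\nabla v\|_{L^{2}}^{2}.
\end{equation*}
The starting point is the standard Poincar\'e--Wirtinger inequality on the torus applied to $v$, namely $\|v-\bar v\|_{L^{2}}\le C_{P}\|\nabla v\|_{L^{2}}$ where $\bar v=\int_{\mathbb{T}^{d}}v\,dx$. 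Combined with the triangle inequality and $|\mathbb{T}^{d}|=1$ this yields
\begin{equation*}
\|v\|_{L^{2}}^{2}\;\le\;2C_{P}^{2}\|\nabla v\|_{L^{2}}^{2}+2\bar v^{\,2}\;=\;2C_{P}^{2}\|\nabla v\|_{L^{2}}^{2}+2\Bigl(\textstyle\int_{\mathbb{T}^{d}}u^{r}\,dx\Bigr)^{2},
\end{equation*}
so everything boils down to controlling $\bigl(\int u^{r}\bigr)^{2}$ by $\bigl(\int u\bigr)^{2r}$, possibly at the cost of a small fraction of $\|v\|_{L^{2}}^{2}$ on the right-hand side.

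For the easy range $r\in[\tfrac12,1]$ the map $t\mapsto t^{r}$ is concave, so Jensen's inequality on the probability space $(\mathbb{T}^{d},dx)$ gives $\int u^{r}\,dx\le \bigl(\int u\,dx\bigr)^{r}$, which immediately closes the estimate with $\tilde{C}=2\max(C_{P}^{2},1)$.

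The case $r>1$ is the only genuinely non-trivial one, and is where the main work lies. Here Jensen goes the wrong way, so I instead interpolate between $L^{1}$ and $L^{2r}$ by H\"older: for $\theta=1/(2r-1)\in(0,1)$ one has $\tfrac{1}{r}=\theta+\tfrac{1-\theta}{2r}$, hence
\begin{equation*}
\|u\|_{L^{r}}\;\le\;\|u\|_{L^{1}}^{\theta}\|u\|_{L^{2r}}^{1-\theta},\qquad\text{i.e.}\qquad\Bigl(\textstyle\int u^{r}\Bigr)^{2}\;\le\;\|u\|_{L^{1}}^{2r\theta}\,\|u\|_{L^{2r}}^{2r(1-\theta)}.
\end{equation*}
A short calculation shows $2r\theta=\tfrac{2r}{2r-1}$ and $2r(1-\theta)=\tfrac{2r(2r-2)}{2r-1}<2r$, so Young's inequality with conjugate exponents $\tfrac{2r}{2r(1-\theta)}$ and $2r-1$ produces, for any $\varepsilon>0$,
\begin{equation*}
\Bigl(\textstyle\int u^{r}\Bigr)^{2}\;\le\;\varepsilon\|u\|_{L^{2r}}^{2r}+C_{\varepsilon}\|u\|_{L^{1}}^{2r}.
\end{equation*}
Plugging this into the Poincar\'e bound and choosing $\varepsilon$ small enough to absorb the resulting $\varepsilon\|u\|_{L^{2r}}^{2r}=\varepsilon\|v\|_{L^{2}}^{2}$ into the left-hand side delivers \eqref{Poincare_x_fix_appendix} with a constant depending only on $r$, $d$ (through $C_{P}$) and the universal Young constant. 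The main obstacle is really bookkeeping in the H\"older/Young step: one must verify that the exponent $2r(1-\theta)$ is strictly below $2r$ for every $r>1$ so that the absorption is legitimate, but this is immediate from $\theta>0$.
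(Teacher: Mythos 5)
Your proof is correct, and it takes a genuinely different route from the paper's. The paper argues by contradiction and compactness: it normalizes $\int_{\mathbb{T}^d}|u_{\tilde C}|^{2r}=1$, uses boundedness of $u_{\tilde C}^r$ in $H^1$ and the Rellich--Kondrachov theorem to extract an $L^2$-convergent subsequence, shows the limit must be a nonzero constant because the gradient vanishes, and then obtains a contradiction from Fatou's lemma applied to the vanishing $L^1$-norms. Your argument, by contrast, is direct and constructive: Poincar\'e--Wirtinger applied to $v=u^r$ reduces the estimate to controlling $\bar v^2=\bigl(\int u^r\bigr)^2$ by $\bigl(\int u\bigr)^{2r}$ plus an absorbable fraction of $\|v\|_{L^2}^2$, which you close via Jensen in the concave range $r\in[\tfrac12,1]$ and via Lyapunov/H\"older interpolation of $\|u\|_{L^r}$ between $\|u\|_{L^1}$ and $\|u\|_{L^{2r}}$ followed by Young's inequality and absorption in the range $r>1$ (and indeed $2r(1-\theta)<2r$ for $\theta=\frac{1}{2r-1}\in(0,1)$, so the absorption is legitimate). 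The compactness argument is shorter but yields no explicit constant; your argument is longer but elementary, traces an explicit constant, and avoids Rellich--Kondrachov entirely. The only caveat worth recording is that the absorption step in the case $r>1$ presupposes $\|u\|_{L^{2r}}<\infty$; this can be justified by first truncating $u$ (or by noting that $\nabla u^r\in L^2$ together with $u^r\in L^1_{\mathrm{loc}}$ already forces $u^r\in L^2$ on the compact torus), a qualitative reduction that the paper's normalization also implicitly performs.
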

	\begin{proof}
We argue by contradiction. Assume that the estimate \eqref{Poincare_x_fix_appendix} is false. Then there exists an $r$ so that for all $\tilde{C}$, there is a function $u_{\tilde{C}}$ satisfying
	\begin{equation*}
		\int_{\mathbb{T}^d} |u_{\tilde{C}}(x)|^{2r}dx> \tilde{C} \left[\left(\int_{\mathbb{T}^d} |u_{\tilde{C}}(x)| \ dx \right)^{2r}+\int_{\mathbb{T}^d} (\nabla u_{\tilde{C}}^{r}(x))^{2} \ dx \right].
	\end{equation*}
	By homogeneity, we can assume $\int_{\mathbb{T}^d} |u_{\tilde{C}}(x)|^{2r}dx=1$. Then,
	\begin{equation}
		\frac{1}{\tilde{C}}> \left[ \left(\int_{\mathbb{T}^d} |u_{\tilde{C}}(x)| \ dx \right)^{2r}+\int_{\mathbb{T}^d} (\nabla u_{\tilde{C}}^{r}(x))^{2} \ dx \right].\label{eq:1}
	\end{equation}
Letting $\tilde{C}\to\infty$, we have
	\begin{align}
		\int_{\mathbb{T}^d} |u_{\tilde{C}}^{r}(x)|^{2}dx & =1, \label{eq:2}
			\end{align}
		and
		\begin{align*}
			\int_{\mathbb{T}^d} |\nabla u_{\tilde{C}}^{r}(x)|^{2}dx  & \to0. 
	\end{align*}
	Thus, the $u_{\tilde{C}}^{r}$ are uniformly bounded in $H^{1}$ and there is a convergent subsequence $u_{\tilde{C}}^{r}\to U$ in $L^{2}$. 
	Taking limit in (\ref{eq:2}) we get $\int_{\mathbb{T}^d} |U(x)|^{2}dx  =1$ and
	taking limit in (\ref{eq:1}) and using the lower semicontinuity of the second term on right-hand side, we have
	\[
	0\ge\int_{\mathbb{T}^d}(\nabla U(x))^{2} \ dx.
	\]
Thus, $U$ is constant $ U \equiv | \mathbb{T}^d|^{-\frac12}$.
	On the other hand, taking a subsequence, we have $u_{\tilde{C}}^{r}\to U$ almost everywhere and so $|u_{\tilde{C}}|\to |U|^{1/r}$.  Using Fatou's lemma, we have
	\[
	\left(\int_{\mathbb{T}^d} |U(x)|^{1/r} \ dx \right)^{2r}\le \left(\liminf_{\tilde{C} \rightarrow \infty }\int_{\mathbb{T}^d}|u_{\tilde{C}}(x)| \ dx \right)^{2r}\le\liminf_{\tilde{C} \rightarrow \infty}\frac{1}{\tilde{C}}=0.
	\]
	Hence, $U=0$, leading to a contradiction. 
\end{proof}
	\section{Optimal time integrability and scaling} \label{Scaling_appendix}

In this section we present a scaling argument that suggests the optimal  time integrability of solutions of porous medium equations. Consider 
\begin{equation} \label{scaling_section_PME_integrability}
	\begin{split}
		\partial_t u &= \Delta (|u|^{m-1} u) \quad \text{on} \ (0,T) \times \mathbb{T}^d_x, \\
		u(0) & =u_0  \qquad  \qquad  \ \ \ \  \text{on} \ \mathbb{T}^d_x,
	\end{split}
\end{equation}
with $u_0 \in L^{1}(\mathbb{T}^d_x)$ and $m>1$. 
For $T>0$, $ 1 \le p < \infty$, $s \in (0, \infty) \setminus \mathbb{N}$, $f \in W^{\lfloor s \rfloor,1}_{\text{loc}}(\mathbb{R}; X)$ and $\theta = s - \lfloor s \rfloor \in (0,1)$, recall the definition of the homogeneous Slobodeckij seminorm 
\begin{equation} \label{Slobodeckji_seminorm}
	\| f \|_{\dot{W}^{s,p}(0,T;X)} := \left(  \int_{[0,T] \times [0,T]}   \frac{\| D^{\lfloor s \rfloor} f(t) - D^{\lfloor s \rfloor} f(z) \|_{X}}{|t-z|^{\theta p +1}}  \ dz \ dt \  \right)^{\frac{1}{p}} < \infty, 
\end{equation}
with the usual modification in the case of $p=\infty$. 

\begin{lem} \label{Scaling_lemma_time_integrability_appendix}
	Let $T>0$, $m \in (1, \infty)$, $p \ge 1$ and $\sigma_t \in(0, \frac{1}{2})$. 	Assume that there is a constant $C \ge 0$ such that
	\begin{equation} \label{energy_estimates_without_forcing}
		\| u \|_{\dot{W}^{\sigma_t,p}(0,T ; L^1(\mathbb{T}^d_x))} \le C \|u_0\|_{L^{1}(\mathbb{T}^d_x)},
	\end{equation} 
	for all solutions $u$ to \eqref{scaling_section_PME_integrability}. Then necessarily $p \le \frac{1}{\sigma_t}$.
	\begin{proof}
		Given a solution $u$ to \eqref{scaling_section_PME_integrability}, for every $\eta \ge 1$, also $\tilde{u}(t,x):=  \eta u( \eta^{m-1} t,x) $ is a solution to  \eqref{scaling_section_PME_integrability}. Thus, $\tilde{u}$ satisfies \eqref{energy_estimates_without_forcing} i.e.
		\begin{equation} \label{energy_estimates_without_forcing_proof_scaled}
			\| \tilde{u} \|_{\dot{W}^{\sigma_t,p}(0,T ; L^1(\mathbb{T}^d_x))} \le C \| \tilde{u}_0\|_{L^{1}(\mathbb{T}^d_x)}.
		\end{equation}
		We observe  
		\begin{equation*}
			\| \tilde{u} \|_{\dot{W}^{\sigma_t,p}(0,T ; L^1(\mathbb{T}^d_x))} 
			=  \eta^{1+(m-1)(\sigma_t  - \frac{1}{p})}	\| u \|_{\dot{W}^{\sigma_t,p}(0,\eta^{m-1} T ; L^1(\mathbb{T}^d_x))},
		\end{equation*}
		and
		\begin{equation*}
			\|	\tilde{u}_0 \|_{L^{1 }(\mathbb{T}^d_x)} = \eta \|	u_0 \|_{L^{1 }(\mathbb{T}^d_x)}.
		\end{equation*}
		It follows from \eqref{energy_estimates_without_forcing_proof_scaled} that
		\begin{equation*} 
			\| u \|_{\dot{W}^{\sigma_t,p}(0,\eta^{m-1} T ; L^1(\mathbb{T}^d_x))} \le C \eta^{-(m-1)(\sigma_t  - \frac{1}{p})} \| u_0\|_{L^{1}(\mathbb{T}^d_x)}.
		\end{equation*}
		Letting $\eta \rightarrow \infty$, this leads to a contradiction (for non-trivial $u_0$) unless 
		\begin{equation*} 
			p \le \frac{1}{\sigma_t}.
		\end{equation*}
	\end{proof}
\end{lem}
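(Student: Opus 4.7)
The plan is to exploit the scale invariance of the porous medium equation on the torus. First, I would verify that equation \eqref{scaling_section_PME_integrability} admits the one-parameter family of dilations $\tilde{u}(t,x) := \eta u(\eta^{m-1} t, x)$ for $\eta \ge 1$, which preserves the equation because both $\partial_t$ and $\Delta(|\cdot|^{m-1}\cdot)$ transform by the same factor $\eta^m$ under this rescaling. Because we work on $\mathbb{T}^d$, only the time variable may be rescaled (rescaling the space variable is incompatible with the fixed period); this asymmetry produces a single scaling exponent which constrains $p$ through $\sigma_t$.

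Next, I would track how both sides of the a priori estimate \eqref{energy_estimates_without_forcing} transform under this dilation. The initial data scales trivially, yielding $\|\tilde{u}_0\|_{L^1(\mathbb{T}^d_x)} = \eta \|u_0\|_{L^1(\mathbb{T}^d_x)}$. For the Slobodeckij seminorm \eqref{Slobodeckji_seminorm}, I would substitute $t' = \eta^{m-1}t$ and $z' = \eta^{m-1}z$ in the double integral, collecting the factor $\eta^p$ coming from $\tilde{u}(t)-\tilde{u}(z)$, the Jacobian $\eta^{-2(m-1)}$ from the two integrations, and the factor $\eta^{(m-1)(\sigma_t p+1)}$ from the rescaled denominator, to obtain after taking the $p$-th root the identity
\[
\|\tilde{u}\|_{\dot{W}^{\sigma_t,p}(0,T;L^1(\mathbb{T}^d_x))} = \eta^{\,1 + (m-1)(\sigma_t - 1/p)} \,\|u\|_{\dot{W}^{\sigma_t,p}(0,\eta^{m-1}T;L^1(\mathbb{T}^d_x))}.
\]

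Inserting this and the initial-data scaling into \eqref{energy_estimates_without_forcing} applied to $\tilde{u}$, one factor of $\eta$ cancels on both sides and I arrive at
\[
\|u\|_{\dot{W}^{\sigma_t,p}(0,\eta^{m-1}T;L^1(\mathbb{T}^d_x))} \le C\, \eta^{-(m-1)(\sigma_t - 1/p)} \|u_0\|_{L^1(\mathbb{T}^d_x)}.
\]
To conclude, I would fix any non-trivial reference solution $u$ so that the left-hand side is bounded below by the positive quantity $\|u\|_{\dot{W}^{\sigma_t,p}(0,T;L^1(\mathbb{T}^d_x))}$ for every $\eta \ge 1$, which holds because the Slobodeckij integrand is non-negative and $(0,T) \subset (0,\eta^{m-1}T)$. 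Sending $\eta \to \infty$ then forces the exponent $-(m-1)(\sigma_t - 1/p)$ to be non-negative, which is exactly $p \le 1/\sigma_t$.

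The argument is otherwise routine; the main place I would have to be careful is the bookkeeping of exponents in the change-of-variables step, since three competing powers of $\eta$ appear (from the prefactor in $\tilde{u}$, from the Jacobian, and from the kernel $|t-z|^{\sigma_t p+1}$), and it is only after cancellation with the linear scaling of $\|\tilde{u}_0\|_{L^1}$ that the exponent $-(m-1)(\sigma_t - 1/p)$ cleanly isolates the desired threshold. A minor auxiliary point is to exhibit at least one solution for which the Slobodeckij seminorm on $(0,T)$ is strictly positive, which is automatic for any non-constant smooth initial datum.
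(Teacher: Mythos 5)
Your proposal is correct and takes essentially the same scaling route as the paper: rescale time by $\eta^{m-1}$ and amplitude by $\eta$, track the exponent of $\eta$ on each side of the a priori estimate, and let $\eta \to \infty$. You fill in one small detail the paper leaves implicit—namely that $\|u\|_{\dot{W}^{\sigma_t,p}(0,\eta^{m-1}T;L^1)}$ is bounded below by the $\eta$-independent quantity $\|u\|_{\dot{W}^{\sigma_t,p}(0,T;L^1)}$, which is what actually produces the contradiction—but otherwise the arguments coincide.
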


	\paragraph*{Acknowledgments.} SB is supported by a scholarship from the EPSRC Centre for Doctoral Training in Statistical Applied Mathematics at Bath (SAMBa), under the project EP/L015684/1. 
	BG acknowledges support by the Max Planck Society through the Max Planck Research Group \textit{Stochastic partial differential equations}. This work was funded by the Deutsche Forschungsgemeinschaft (DFG, German Research Foundation) - SFB 1283/2 2021 - 317210226. HW is supported by the Royal Society through the University Research Fellowship UF140187  and by the Leverhulme Trust through a Philip Leverhulme Prize. SB, BG and HW thank the Isaac Newton
	Institute for Mathematical Sciences for hospitality during the programme \textit{Scaling limits, rough paths, quantum field theory}, which was supported by EPSRC Grant No. EP/R014604/1.

	\bibliographystyle{plain}
	\bibliography{References_paper}

\end{document}